\definecolor{grey}{rgb}{0.5,0.5,0.5}
\newcommand{\Cancel}[2][black]{{\color{#1}\cancel{\color{black}#2}}}
\newlist{steps}{enumerate}{1}
\newcommand{\xMapsto}[2][]{\ext@arrow 0599{\Mapstofill@}{#1}{#2}}
\def\Mapstofill@{\arrowfill@{\Mapstochar\Relbar}\Relbar\Rightarrow}
\setlist[steps, 1]{label = Step \arabic*:}
\theoremstyle{plain}
\newtheorem*{theorem*}{Theorem}
\newtheorem{theorem}{Theorem}[subsection]
\newtheorem{definition}[theorem]{Definition}
\newtheorem{proposition}[theorem]{Proposition}
\newtheorem{remark}[theorem]{Remark}
\newtheorem{corollary}[theorem]{Corollary}
\newtheorem*{corollary*}{Corollary}
\newtheorem*{proposition*}{Proposition}
\newtheorem{definition*}{Definition}
\newtheorem{exmp}[theorem]{Example}
\newtheorem{lemma}[theorem]{Lemma}
\newtheorem*{lemma*}{Lemma}
\numberwithin{equation}{subsection}
\title{A Simple Description of the Hyperk\"{a}hler Structure of the Cotangent Bundle of Projective Space via Quantization}
\author{Joshua Lackman\footnote{Beijing International Center for Mathematical Research, Peking University. E-mail address: josh@pku.edu.cn}}
\date{}
\begin{document}

\maketitle
\begin{abstract}
\noindent 
Quantization identifies the cotangent bundle of projective space with the (non–Hermitian) rank–$1$ projections of a Hilbert space.\ We use this identification to study the natural geometric structures of these cotangent bundles and those of Grassmanians.\ In particular, we show that the quantization map is an isometric and complex embedding $\textup{T}^*\mathbb{P}\mathcal{H}\xhookrightarrow{}\mathcal{B}(\mathcal{H})\backslash \{0\}.$ Here, the metric on the domain is the hyperk\"{a}hler metric and the metric on the codomain is the one whose K\"{a}hler potential is the Hilbert–Schmidt norm.\ The K\"{a}hler potential pulled back to $\textup{T}^*\mathbb{P}\mathcal{H}$ equals the trace–class norm.\ Using this, we give a complete, simple and explicit description of the hyperk\"{a}hler structure.\ Our constructions are functorial, coordinate-free and reduction-free. 
\end{abstract}
\tableofcontents
\section{Introduction}
In Berezin's framework, a quantization of a real symplectic manifold is a \textit{supercomplete} embedding into the rank–$1$ Hermitian projections of a Hilbert space, where the latter is the space of quantum states and states in the embedded submanifold are the coherent states (\cite{ber1}, \cite{pol0}).\ The nicest examples are given by complex embeddings that are invariant under an irreducible representation — these are supercomplete due to Schur's lemma, which equivalently means that the constant function $1$ quantizes to the identity operator.\ The fundamental example for this framework is given by projective space $\mathbb{P}\mathcal{H},$ whose quantization identifies it exactly with the space of rank–$1$ Hermitian projections of $\mathcal{H}.$\footnote{This formulation is based on the observation that a quantization of \textit{all} observables immediately follows from a quantization of classical states, via a simple integral formula.\ By contrast, geometric quantization doesn't quantize any states and very few observables.} Quantization via almost complex structures provides more general examples (\cite{bord}, \cite{bor}), eg. Toeplitz quantization.
\\\\Extending Berezin's framework (\cite{Lackman1}), a quantization of a holomorphic symplectic manifold is a supercomplete embedding into the \textit{complexification} of the space of rank–$1$ Hermitian projections of a Hilbert space — this is the space of \textit{all} rank–$1$ projections, ie.\ it includes non–Hermitian ones. Here, the fundamental example is given by $\textup{T}^*\mathbb{P}\mathcal{H},$ whose quantization identifies it \textit{exactly} with the space of \textit{all} rank–1 projections of $\mathcal{H}.$ This is as an affine subvariety of $\mathcal{B}(\mathcal{H})$ (the bounded linear operators of $\mathcal{H}).$ 
\\\\Unlike $\mathbb{P}\mathcal{H},$ $\textup{T}^*\mathbb{P}\mathcal{H}$ carries many $U(\mathcal{H})$–invariant K\"{a}hler forms and holomorphic symplectic forms, and one can consider embeddings with respect to each of these.\ We record its hyperk\"{a}hler structure below.\ In the case of $\mathcal{H}=\mathbb{C}^2,$ it agrees with that of Eguchi–Hanson (\cite{biq}, \cite{calabi}). We note that, the complement of zero in any Hilbert space is a K\"{a}hler manifold, with K\"{a}hler potential given by the norm.
\begin{proposition}
Assume the embedding $\textup{T}^*\mathbb{P}\mathcal{H}\xhookrightarrow{}\mathcal{B}(\mathcal{H})\backslash\{0\}.$ We have a hyperk\"{a}hler structure given as follows:\ the Hilbert–Schmidt norm $\textup{T}^*\mathbb{P}\mathcal{H}\to\mathbb{R},\,q\mapsto\|q\|$ is a K\"{a}hler potential for the Riemannian metric given by the real part of
\begin{equation}
   \mathbf{h}(A,B)= \frac{1}{\|q\|}\textup{Tr}(A^*B)-\frac{1}{2\|q\|^3}\textup{Tr}(A^*q)\textup{Tr}(Bq^*)\;,
\end{equation}
where $A,B\in \textup{T}_q\textup{T}^*\mathbb{P}\mathcal{H}\subset \mathcal{B}(\mathcal{H})$ and the almost complex structure is given by $IA:=iA.$ The $I$–holomorphic symplectic form is given by
\begin{equation}\label{omega}
    \Omega(A,B)=i\textup{Tr}(q[A,B])
\end{equation}
and the equation $\mathbf{h}(\mathbf{J}A,B)=\Omega(A,B)$ defines an integrable almost complex structure $\mathbf{J}$ that anticommutes with $I.$ Explicitly, it given by
\begin{equation}
    \mathbf{J}A=\frac{i}{\|q\|}[q,A^*]+\frac{i}{2\|q\|^3}\textup{Tr}(A^*q)[q,q^*]\;.
    \end{equation}
Furthermore, 
there is a third integrable almost complex structure $J$ that \textbf{commutes} with $I,$ given by
\begin{equation}
    JA=i[A,q],
\end{equation}
and it is such that $\Omega(JA,JB)=\Omega(A,B).$ 
    \end{proposition}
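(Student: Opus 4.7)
The plan is to verify everything by direct computation in $\mathcal{B}(\mathcal{H})$, using the holomorphic embedding $\textup{T}^*\mathbb{P}\mathcal{H} \hookrightarrow \mathcal{B}(\mathcal{H})\setminus\{0\}$ together with the tangent-space decomposition $A = A_+ + A_-$ at $q$, where $A_+ = qA(1-q)$ and $A_- = (1-q)Aq$, a consequence of the constraints $q^2 = q$, $A = qA + Aq$, and hence $qAq = 0$. This gives the useful identities $qA = A_+$, $Aq = A_-$, and $A_+ B_+ = A_- B_- = 0$. For the K\"{a}hler structure associated with $I$: since the embedding is holomorphic, the ambient K\"{a}hler potential $K(q) = \|q\|$ pulls back to a K\"{a}hler potential on $\textup{T}^*\mathbb{P}\mathcal{H}$; a direct computation of $\partial\bar\partial\sqrt{\textup{Tr}(q^*q)}$, treating $q, q^*$ as independent holomorphic and antiholomorphic coordinates, yields (up to the conventional factor of $2$) the Hermitian form~(1).

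For the $(2, 0)$-form $\Omega$ and the anticommuting $\mathbf{J}$: in the $A_\pm$ decomposition, $\Omega(A, B) = i(\textup{Tr}(A_+ B_-) - \textup{Tr}(A_- B_+))$, which --- after identifying $A_+$ with a holomorphic tangent vector to $\mathbb{P}\mathcal{H}$ at the line $q\mathcal{H}$ and $A_-$ with its dual covector --- is the canonical Liouville 2-form on the holomorphic cotangent bundle; hence $\Omega$ is closed and non-degenerate, and the identity $\Omega(IA, B) = i\Omega(A, B)$ shows it is of type $(2, 0)$. To derive $\mathbf{J}$, make the $U(\mathcal{H})$-equivariant ansatz $\mathbf{J}A = \alpha(\|q\|)[q, A^*] + \beta(\|q\|)\textup{Tr}(A^*q)[q, q^*]$, which is automatically tangent by a one-line check using $q^2 = q$, and solve $\mathbf{h}(\mathbf{J}A, B) = \Omega(A, B)$ for $\alpha, \beta$ by pairing against arbitrary tangent $B$; this forces $\alpha = i/\|q\|$ and $\beta = i/(2\|q\|^3)$. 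The relations $\mathbf{J}^2 = -\mathrm{id}$ and $\{I, \mathbf{J}\} = 0$ then follow by substitution, and the full hyperk\"{a}hler triple $(I, \mathbf{J}, I\mathbf{J})$ --- in particular integrability of $\mathbf{J}$ --- follows from the standard principle: on a K\"{a}hler manifold with a holomorphic symplectic form $\Omega$, once the almost complex structure determined by $g(\mathbf{J}\cdot, \cdot) = \textup{Re}\,\Omega$ is shown to satisfy $\mathbf{J}^2 = -\mathrm{id}$, the structure is hyperk\"{a}hler.

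For the commuting $J$: tangency of $JA = i[A, q]$, the identity $[[A, q], q] = A - 2qAq = A$ on the tangent space (yielding $J^2 = -\mathrm{id}$), and $IJ = JI$ are all immediate. The compatibility $\Omega(JA, JB) = \Omega(A, B)$ reduces to the trace identity $\textup{Tr}(q[[A, q], [B, q]]) = -\textup{Tr}(q[A, B])$, which follows by expansion using cyclicity and $qAq = qBq = 0$. For integrability, extend $A$ to the tangent vector field $X_{q'} := q'A(1-q') + (1-q')Aq'$ and compute the Nijenhuis tensor directly; the key observations are (i) the directional derivative of $JX$ at $q$ in any direction $C$ simplifies dramatically to $i[A, C]$ as many cross terms cancel, and (ii) $[q, [A, B]] = 0$ for tangent $A, B$, a consequence of the identities above. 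Together these force every term of the Nijenhuis expression to vanish.

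The main obstacle will be the careful bookkeeping with the tangent-space constraints throughout --- particularly in the derivation of $\mathbf{J}$ from its defining equation, where one must work modulo the normal-space ambiguity when pairing against tangent $B$, and in the Nijenhuis computation for $J$. Once the simplifying identities $qA = A_+$, $Aq = A_-$ are in place, however, the computations are short.
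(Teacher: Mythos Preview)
Your proposal is correct and largely parallels the paper, but diverges at three points. For closedness of $\Omega$ you invoke the Liouville form on the holomorphic cotangent bundle via the $A_\pm$ splitting; the paper instead computes $d\Omega$ directly using the vector--field extension $p\mapsto[p,[q,A]]$ together with the lemma $\textup{Tr}(A_1\cdots A_{2k+1})=0$ for tangent vectors. For integrability of $J$ you run a Nijenhuis computation with the extension $X_{q'}=[q',[q',A]]$ (your $q'A(1-q')+(1-q')Aq'$), observing that $(JX)_{q'}=i[A,q']$ so all brackets collapse via $[q,[A,B]]=0$; the paper instead identifies $J$ with the product complex structure under the dense embedding $q\mapsto(\pi(q),\pi(q^*))\in\mathbb{P}\mathcal{H}\times\mathbb{P}\mathcal{H}$. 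Your arguments are more elementary and self--contained; the paper's are more conceptual and adapt to higher--rank Grassmannians. One caution: you pass over $\mathbf{J}^2=-1$ as routine substitution, but this is precisely where the rank--$1$ hypothesis is essential and the computation is the least trivial in the whole argument. The paper parametrises tangent vectors as $A=[q,M]$ and repeatedly uses the rank--$1$ identity $qMq=\textup{Tr}(qM)\,q$ to produce a chain of a dozen cancellations; whatever route you take, some form of this identity---which is \emph{not} a consequence of the tangent constraints $qA+Aq=A$ alone---must be invoked explicitly.
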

Of course, with this metric $\textup{T}^*\mathbb{P}\mathcal{H}\xhookrightarrow{}\mathcal{B}(\mathcal{H})\backslash\{0\}$ is an isometric and complex embedding.\footnote{For the two–dimensional case, an isometric embedding of the Eguchi–Hanson metric into $\mathbb{R}^{11}$ is described in \cite{hanson}.}\ The former's K\"{a}hler potential is equal to the trace–class norm. This is natural from the physics perspective since states in quantum statistical mechanics have unit trace–class norm.\ The tensor fields $\Omega,\mathbf{J},J$ are also defined on $\mathcal{B}(\mathcal{H})\backslash\{0\},$ but only on certain submanifolds is $\Omega$ closed and do $\mathbf{J},J$ square to $-1.$ We will also describe a fourth integrable almost complex structure $\hat{J}$ that commutes with $J.$
\\\\
Due to the mutual compatibility of the different geometric structures, the fixed point sets of antiholomorphic involutions of $\textup{T}^*\mathbb{P}\mathcal{H}$ tend to be symplectic and come with natural polarizations.\ In particular, such fixed point sets have comparable quantizations and deformation quantizations, a phenomenon that is important in the brane quantization of \cite{brane}.\ The anticommuting complex structures tend to induce K\"{a}hler polarizations, whereas the commuting complex structures tend to also induce Lagrangian polarizations (\cite{Lackman1}).\ The latter is due to the fact that the product of commuting complex structures is an involution, whereas the product of anticommuting complex structures is another complex structure. 
\\\\For the simplest example, $\textup{T}^*\mathbb{P}^1$ is identified with rank–$1$ projections of $\mathbb{C}^2,$ ie.\ matrices of the form
\begin{equation}
    \begin{pmatrix}
        z & y \\
        x & 1-z
        \end{pmatrix}
\end{equation}
for which $xy-z+z^2=0.$
We have $I$–antiholomorphic involutions whose fixed point sets are the sphere, unit disk and cylinder, respectively given by
\begin{equation}
  (x,y,z)\mapsto (\bar{y},\bar{x},\bar{z})\,,\;\;(x,y,z)\mapsto (-\bar{y},-\bar{x},\bar{z})\,,\;\;(x,y,z)\mapsto (\bar{x},\bar{y},\bar{z}) \,.
\end{equation}
These are all symplectic submanifolds with respect to $\Omega.$ The sphere and unit disk inherit K\"{a}hler polarizations, while the cylinder inherits K\"{a}hler and Lagrangian polarizations.\ As these are all preserved by $I$–antiholomorphic involutions, they have isomorphic deformation quantizations.\footnote{Precisely, we are considering only those functions that extend to holomorphic functions on the cotangent bundle. These isomorphisms aren't Hermitian, ie.\ they don't preserve real–valued functions since a function that is real–valued on one submanifold doesn't need to be real–valued on another. See \cite{brane}.} 
\\\\The map 
\begin{equation}\label{affine}
    (x,y,z)\mapsto    (i(x-y),x+y,1-2z)
\end{equation} 
is an isomorphism onto $\{(x,y,z)\in\mathbb{C}^3:x^2+y^2+z^2=1\},$ where the almost complex structure $J$ that commutes with $I$ is given by the complexified cross product. That is, for a vector $(a,b,c)$ at $(x,y,z),$ 
\begin{equation}
    J(a,b,c)=(a,b,c)\times (x,y,z)\;.
    \end{equation}
\part{Differential Geometry of $\textup{T}^*\mathbf{G}\mathcal{H}$}
In part 1, we will first describe the quantization $\textup{T}^*\mathbf{G}_n\mathcal{H}\xhookrightarrow{}\mathcal{B}(\mathcal{H}),$ whose image is the space of (non–Hermitian) rank–$n$ projections.\footnote{This can equivalently be done using a prequantum holomorphic line bundle for $\Omega$ of \cref{holm}. See \cite{Lackman1}.} We will then study this space of projections, including its complex structures, symplectic structures, Riemannian metrics, vector bundle structure, compactification, Poisson geometry and path integrals.
\\\\In part 2 we will describe the hyperk\"{a}hler structure of $\textup{T}^*\mathbb{P}\mathcal{H}.$ Only \cref{affinee}, \cref{hols} are needed to understand it.
\\\\The goal is to describe the geometry of $\textup{T}^*\mathbf{G}\mathcal{H}$ via its identification with rank–$n$ projections, without referencing the commutative world.\ In fact, we will show that we can do this purely at the level of $C^*$–algebras, without referencing a representation.\ As such, this paper is coordinate–free and reduction–free. Except for when necessary, we will drop the subscript $n$ from the notation and leave the dimension of the Grassmanian implicit.
\begin{remark}
Via the complexified Pl\"{u}cker embedding, we can embed $\textup{T}^*\mathbf{G}\mathcal{H}$ into $\textup{T}^*\mathbb{P}(\wedge^{n} \mathcal{H}),$ where $\wedge^{n} \mathcal{H}$ is the exterior power and is equal to the Hilbert space of a system consisting of $n$ fermions.
\end{remark}
\section{$\textup{T}^*\mathbf{G}\mathcal{H}$ as an Affine Subvariety of $\mathcal{B}(\mathcal{H})$}\label{affinee}
We will follow the exposition of \cite{Lackman1}. Before describing the embedding in \cref{emb} we will describe an isomorphism $\textup{T}^*_V\mathbf{G}\mathcal{H}\cong \textup{Hom}(V^{\perp},V).$
\begin{lemma}\label{tan1}
Under the standard identification of $\mathbf{G}_n\mathcal{H}$ with rank–$n$ Hermitian projections, we have that $\textup{T}_q\mathbf{G}\mathcal{H}\cong \{A\in \mathcal{B}(\mathcal{H}): qA+Aq=A\,,\;A=A^*\},$ where $q$ is a rank–$n$ Hermitian projection.
\end{lemma}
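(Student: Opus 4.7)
The plan is to establish both inclusions by differentiating the defining equations for one direction and by constructing an explicit unitary conjugation path for the other.

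For the forward inclusion, I would take a smooth curve $q(t)$ of rank--$n$ Hermitian projections with $q(0)=q$ and set $A:=q'(0)$. Differentiating $q(t)^*=q(t)$ at $t=0$ immediately gives $A=A^*$, while differentiating $q(t)^2=q(t)$ at $t=0$ using the Leibniz rule yields $qA+Aq=A$. This shows $\textup{T}_q\mathbf{G}\mathcal{H}$ is contained in the claimed set.

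For the converse, given $A\in\mathcal{B}(\mathcal{H})$ with $A=A^*$ and $qA+Aq=A$, I would first note that multiplying $qA+Aq=A$ on the left by $q$ gives $qAq+qAq=qA$, while multiplying on the right by $q$ gives $qAq+qAq=Aq$; subtracting yields $qA=Aq$... wait, more cleanly: conjugating by $q$ on both sides gives $qAq=0$, so $A$ is block--off--diagonal relative to the decomposition $\mathcal{H}=q\mathcal{H}\oplus(1-q)\mathcal{H}$. The key move is to define
\begin{equation*}
    X:=[A,q]=Aq-qA,
\end{equation*}
which is skew--Hermitian since $A$ and $q$ are both Hermitian. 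Then $U(t):=e^{tX}$ is a smooth curve of unitaries with $U(0)=\textup{Id}$, and
\begin{equation*}
    q(t):=U(t)\,q\,U(t)^{*}
\end{equation*}
is a smooth curve of rank--$n$ Hermitian projections (rank is preserved under unitary conjugation; the projection and Hermitian properties are automatic). Differentiating at $t=0$ yields $q'(0)=[X,q]=[[A,q],q]$, and expanding gives
\begin{equation*}
    [[A,q],q]=Aq^{2}-2qAq+qA\cdot q+\ldots =Aq+qA-2qAq=A-0=A,
\end{equation*}
using $q^2=q$, $qAq=0$, and $qA+Aq=A$. Hence $A\in\textup{T}_q\mathbf{G}\mathcal{H}$.

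There is no substantive obstacle; the only non--obvious step is spotting the skew--Hermitian generator $X=[A,q]$, which simultaneously encodes the infinitesimal motion and the compatibility with the Hermiticity and idempotency constraints. Everything else is a short algebraic verification, and the construction shows that the infinitesimal-to-finite passage is literally achieved by the unitary orbit of $q$ under $e^{t[A,q]}$.
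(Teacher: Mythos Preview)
Your argument is correct. The forward inclusion is exactly the paper's proof: the paper simply observes that $A\in\textup{T}_q\mathbf{G}\mathcal{H}$ if and only if $(q+\varepsilon A)^2=q+\varepsilon A$ and $(q+\varepsilon A)^*=q+\varepsilon A$ hold modulo $\mathcal{O}(\varepsilon^2)$, which is your differentiation step.

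For the reverse inclusion the two approaches diverge. The paper treats the first--order statement as an ``if and only if'', implicitly relying on the fact that $\mathbf{G}_n\mathcal{H}$ is already known to be a smooth submanifold cut out by those equations, so the linearization characterizes the tangent space exactly. You instead give a direct construction: from $A$ satisfying the linear constraints you build the skew--Hermitian $X=[A,q]$ and exhibit the curve $e^{tX}qe^{-tX}$ whose velocity at $t=0$ is $[[A,q],q]=A$. Your route is more self--contained (it does not invoke any manifold structure in advance) and it foreshadows the identity $[q,[q,A]]=A$ that the paper records later in its list of basic identities. The paper's route is shorter but leans on the standard differential--geometric fact that tangent vectors to a submanifold are exactly the first--order solutions of the defining equations. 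A minor cosmetic point: the intermediate expansion you wrote for $[[A,q],q]$ is garbled, but the clean computation $(Aq-qA)q-q(Aq-qA)=Aq-2qAq+qA=A$ is what you need.
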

\begin{proof}
This follows from the fact that modulo terms of order $\mathcal{O}(\varepsilon^2),$ $A\in \textup{T}_q\mathbf{G}\mathcal{H}$ if and only if 
\begin{equation}
    (q+\varepsilon A)^2=q+\varepsilon A\,,\;\;(q+\varepsilon A)^*=q+\varepsilon A\;.
\end{equation}
\end{proof}
As a result, we get the following standard identification:
\begin{proposition}\label{stand}
$\textup{T}_q\mathbf{G}\mathcal{H}\cong \textup{Hom}(V,V^{\perp}),$ where $V$ is the image of $q.$
\end{proposition}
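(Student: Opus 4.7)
The plan is to use Lemma \ref{tan1} to reduce the claim to a purely algebraic decomposition of operators with respect to the block decomposition $\mathcal{H} = V \oplus V^{\perp}$ induced by $q$, and then use self-adjointness to identify one off-diagonal block as the free datum.

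First I would unpack the relation $qA + Aq = A$ for $A \in \textup{T}_q\mathbf{G}\mathcal{H}$. Multiplying this identity on the left by $q$ and using $q^2 = q$ gives $qA + qAq = qA$, hence $qAq = 0$. Multiplying on the right by $(1-q)$ gives $qA(1-q) + Aq(1-q) = A(1-q)$, so $(1-q)A(1-q) = 0$ as well. Thus, with respect to the orthogonal decomposition $\mathcal{H} = V \oplus V^{\perp}$, the operator $A$ has vanishing diagonal blocks, i.e.
\begin{equation}
A = qA(1-q) + (1-q)Aq.
\end{equation}
A quick check confirms the converse: any $A$ of this form satisfies $qA + Aq = A$.

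Next I would use $A = A^*$ to show the two off-diagonal blocks determine one another: $(qA(1-q))^* = (1-q)Aq$. Hence $A$ is completely determined by its lower-left block $B := (1-q)Aq$, viewed as a map $V \to V^{\perp}$ (since $q$ projects onto $V$ and $1-q$ onto $V^{\perp}$).

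This gives the natural linear map
\begin{equation}
\textup{T}_q\mathbf{G}\mathcal{H} \to \textup{Hom}(V, V^{\perp}), \qquad A \mapsto (1-q)Aq\big|_V,
\end{equation}
whose inverse sends $B \in \textup{Hom}(V, V^{\perp})$ to the self-adjoint operator $\tilde B + \tilde B^*$, where $\tilde B$ is the extension of $B$ by zero on $V^{\perp}$. I would finish by verifying these two assignments are mutually inverse, which is immediate from the block computation above. There is no real obstacle here — the only thing to be careful about is keeping track of domains and codomains so that the claimed isomorphism is genuinely $\textup{Hom}(V, V^{\perp})$ and not its adjoint $\textup{Hom}(V^{\perp}, V)$, but the self-adjointness constraint means either choice gives an equivalent description.
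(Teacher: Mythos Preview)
Your proof is correct and follows essentially the same route as the paper: both use Lemma~\ref{tan1}, deduce $qAq=0$ (you additionally make $(1-q)A(1-q)=0$ explicit), and then invoke $A=A^*$ to identify $A$ with a single off-diagonal block in $\textup{Hom}(V,V^\perp)$, with inverse $T\mapsto \tilde T+\tilde T^*$. The paper writes the forward map as $A\mapsto A|_V$ and the inverse as $T\mapsto T\oplus T^*$, but these are the same maps as yours once one unwinds the notation.
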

\begin{proof}
The equation $qA+Aq=A$ implies that $qAq=0,$ therefore 
\begin{equation}
    A\vert_V(V)\subset V^{\perp}\;.
\end{equation}
As a result, we have a map
\begin{equation}\label{vvperp}
\textup{T}_q\mathbf{G}\mathcal{H}\to  \textup{Hom}(V,V^{\perp})\,,\;\;A\mapsto A\vert_V\;.
\end{equation}
Conversely, given $T\in \textup{Hom}(V,V^{\perp})$ and using the natural splitting $\mathcal{H}=V\oplus V^{\perp},$ we let 
\begin{equation}
A:=T\oplus T^{^*}\in\mathcal{B}(\mathcal{H})\;.
\end{equation}
From $T\vert_{V^\perp}=T^*\vert_{V}=0$ it follows that
\begin{equation}
    qA+Aq=A\;.
\end{equation}
Therefore, we have a map
\begin{equation}
    \textup{Hom}(V,V^{\perp})\to\textup{T}_q\mathbf{G}\mathcal{H}\;,\;\;T\mapsto T\oplus T^*\;,
\end{equation}
and this is the inverse of \cref{vvperp}.
\end{proof}
\begin{proposition}\label{perf}
$\textup{Hom}(V^\perp,V)\times\textup{Hom}(V,V^\perp)\mapsto\mathbb{C}\,,\,(f,g)\mapsto \textup{Re}\big(\textup{Tr}(fg)\big)$ is a perfect pairing.
\end{proposition}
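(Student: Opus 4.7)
The plan is to verify non-degeneracy in both arguments and then identify each space with the real continuous dual of the other. The essential observation is that, since $V$ is finite-dimensional (of dimension $n$), any bounded linear map in $\textup{Hom}(V^\perp, V)$ or $\textup{Hom}(V, V^\perp)$ automatically has rank at most $n$, hence is Hilbert--Schmidt, and the composition $fg \colon V \to V$ lives on a finite-dimensional space so its trace is well-defined without any convergence issues. In particular, taking the Hermitian adjoint gives a conjugate-linear isomorphism between the two $\textup{Hom}$ spaces, and both of them become Hilbert spaces under the Hilbert--Schmidt inner product
\begin{equation*}
    \langle f_1, f_2 \rangle_{HS} = \textup{Tr}(f_1 f_2^*).
\end{equation*}

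For non-degeneracy, the symmetry given by the adjoint makes both directions essentially the same calculation. Given a nonzero $f \in \textup{Hom}(V^\perp, V)$, I would set $g := f^* \in \textup{Hom}(V, V^\perp)$, so that
\begin{equation*}
    \textup{Re}\,\textup{Tr}(fg) = \textup{Re}\,\textup{Tr}(ff^*) = \|f\|_{HS}^{2} > 0,
\end{equation*}
and symmetrically for a nonzero $g$, I would set $f := g^*$. This already shows the pairing is non-degenerate in each slot.

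To upgrade this to a perfect pairing (i.e., that the induced map $\textup{Hom}(V^\perp,V) \to \textup{Hom}(V, V^\perp)^*_{\mathbb{R}}$ is a bijection onto the real continuous dual), I would invoke the Riesz representation theorem for complex Hilbert spaces: every continuous $\mathbb{R}$–linear functional $\varphi$ on a complex Hilbert space $\mathcal{K}$ is of the form $v \mapsto \textup{Re}\langle v, h\rangle$ for a unique $h \in \mathcal{K}$. Applying this with $\mathcal{K} = \textup{Hom}(V, V^\perp)$ and using $\langle g, h\rangle_{HS} = \textup{Tr}(h^* g)$ gives $\varphi(g) = \textup{Re}\,\textup{Tr}(h^* g)$; setting $f := h^*$ yields exactly the form in the proposition. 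Uniqueness of $f$ follows from the already-established non-degeneracy.

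There is no real obstacle: the only subtle point is ensuring the trace and the Hilbert--Schmidt norms are genuinely finite, which is the reason I would pause to remark that the finite-dimensionality of $V$ forces all operators in sight to have rank at most $n$. The same statement could of course be proved by choosing orthonormal bases of $V$ and $V^\perp$ and reducing the pairing to the standard perfect pairing on $M_{n \times \dim V^\perp}(\mathbb{C}) \times M_{\dim V^\perp \times n}(\mathbb{C})$, but the coordinate-free argument via adjoint and Hilbert--Schmidt structure is shorter and matches the paper's stated philosophy.
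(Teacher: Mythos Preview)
Your proof is correct and follows the same approach as the paper: the paper's entire proof is the single line ``This follows from the fact that $\textup{Tr}(ff^*)>0$,'' which is exactly your non-degeneracy argument via the adjoint. You are more thorough in that you also explicitly verify surjectivity onto the continuous real dual via Riesz representation, a point the paper leaves implicit; this extra care is appropriate since $V^\perp$ need not be finite-dimensional, and it is precisely what is used in the subsequent corollary identifying $\textup{Hom}(V^\perp,V)$ with $\textup{T}_q^*\mathbf{G}\mathcal{H}$.
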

\begin{proof}
This follows from the fact that $\textup{Tr}(ff^*)>0.$
\end{proof}
\begin{proposition}\label{perpstand}
Letting $V$ be the image of $q$ and assuming the identification $\textup{T}_q\mathbf{G}\mathcal{H}\cong \textup{Hom}(V,V^{\perp}),$
\begin{equation}
\textup{Hom}(V^\perp,V)\xmapsto{\alpha} \textup{T}^*_q\mathbf{G}\mathcal{H}\;,\;\;f\mapsto \alpha_f\,,\,\alpha_f(g)= \textup{Re}(\textup{Tr}(fg))\end{equation}
is a linear isomorphism.
\end{proposition}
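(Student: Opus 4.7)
The plan is to assemble the two preceding propositions: Proposition \ref{stand} identifies $\textup{T}_q\mathbf{G}\mathcal{H}$ with $\textup{Hom}(V,V^{\perp})$, while Proposition \ref{perf} supplies a non–degenerate $\mathbb{R}$–bilinear pairing between this and $\textup{Hom}(V^{\perp},V)$. The map $\alpha$ is precisely the map of $f$ to the functional induced by this pairing, so the claim is essentially a repackaging of those two results.

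First I would check well–definedness and $\mathbb{R}$–linearity. Since $V$ is finite–dimensional of rank $n$, every $f\in\textup{Hom}(V^{\perp},V)$ has rank at most $n$, so for $g\in \textup{Hom}(V,V^{\perp})$ the composition $fg$ is a finite–rank endomorphism of $V$ whose trace is well defined. Real part and trace are $\mathbb{R}$–linear, so $\alpha_f$ is a continuous real–linear functional on the tangent space and $f\mapsto\alpha_f$ is $\mathbb{R}$–linear in $f$.

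Injectivity follows immediately from Proposition \ref{perf}: if $\alpha_f=0$ then $\textup{Re}(\textup{Tr}(fg))=0$ for every $g\in\textup{Hom}(V,V^{\perp})$, and plugging in $g=f^*$ gives $\textup{Tr}(ff^*)=0$, forcing $f=0$ exactly as in the proof of Proposition \ref{perf}.

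For surjectivity I would distinguish two cases. When $\mathcal{H}$ is finite–dimensional, both $\textup{Hom}(V^{\perp},V)$ and $\textup{Hom}(V,V^{\perp})$ have complex dimension $n(\dim\mathcal{H}-n)$, hence the same real dimension as $\textup{T}_q^*\mathbf{G}\mathcal{H}$, so injectivity upgrades to bijectivity by dimension count. In general, since $V$ is finite–dimensional, $\textup{Hom}(V,V^{\perp})$ inherits a Hilbert space structure from the Hilbert–Schmidt inner product $(g_1,g_2)\mapsto \textup{Tr}(g_2^*g_1)$, and the map $g\mapsto g^*\in \textup{Hom}(V^{\perp},V)$ is an $\mathbb{R}$–linear isometric bijection onto the codomain of $\alpha$; composing with a Riesz representation argument identifies any continuous real–linear functional on $\textup{Hom}(V,V^{\perp})$ with some $\alpha_f$. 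The only mildly delicate point is fixing which topological dual one means for the cotangent space in the infinite–dimensional setting, but given that the ambient inner product is already Hilbert–Schmidt, this is a bookkeeping matter rather than a substantive obstacle.
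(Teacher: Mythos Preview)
Your proof is correct and follows exactly the paper's approach: the paper's own proof is the one-line ``This follows immediately from \cref{stand}, \cref{perf},'' and you have simply unpacked what that means, making explicit the injectivity via $g=f^*$ and surjectivity via dimension count or Riesz. Your additional care in the infinite-dimensional case (using the Hilbert--Schmidt structure on $\textup{Hom}(V,V^\perp)$ and Riesz representation) is more detail than the paper provides, but entirely in the same spirit.
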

\begin{proof}
This follows immediately from \cref{stand}, \cref{perf}.
\end{proof}
As a corollary, we get the desired identification of $\textup{T}^*\mathbf{G}_n\mathcal{H}$ with rank–$n$ projections, ie.
\begin{corollary}\label{emb}
$\textup{T}^*\mathbf{G}_n\mathcal{H}\cong\{q\in\mathcal{B}(\mathcal{H}): q^2=q, \textup{Tr}(q)=n\}.$
\end{corollary}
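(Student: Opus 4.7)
The plan is to write down an explicit map $(q,f)\mapsto q+f$ from $\textup{T}^*\mathbf{G}_n\mathcal{H}$ to the set of (non-Hermitian) rank-$n$ projections. By \cref{stand} and \cref{perpstand}, a cotangent vector at $q$ is already identified with an element $f\in\textup{Hom}(V^\perp,V)$, where $V:=\textup{im}(q)$; I extend $f$ to all of $\mathcal{H}$ by declaring $f\vert_V=0$, so $q+f\in\mathcal{B}(\mathcal{H})$ is well defined.

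The first step is to verify directly that $q+f$ is a rank-$n$ projection. The key observation is that $f$ maps into $V$ and vanishes on $V$, from which $qf=f$, $fq=0$, and $f^2=0$ follow immediately. Expanding $(q+f)^2$ then gives $q+f$, and the trace splits as $\textup{Tr}(q)+\textup{Tr}(f)=n+0=n$, since $f$ is strictly off-diagonal with respect to $\mathcal{H}=V\oplus V^\perp$. For the inverse direction, given $Q$ with $Q^2=Q$ and $\textup{Tr}(Q)=n$, I set $V:=\textup{im}(Q)$ and take $q$ to be the orthogonal projection onto $V$; block-decomposing $Q$ along $\mathcal{H}=V\oplus V^\perp$ as $\begin{pmatrix}A&B\\C&D\end{pmatrix}$, the inclusion $\textup{im}(Q)\subset V$ immediately forces $C=0$ and $D=0$. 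Idempotency then gives $A^2=A$ and $AB=B$, and since $\textup{im}(Q)=V$ forces $A$ to be a surjective idempotent on $V$, we conclude $A=I_V$. Hence $Q=q+B$ with $B\in\textup{Hom}(V^\perp,V)$, and this recovers the pair $(q,B)$, giving a two-sided inverse by construction.

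The main (mild) technical point, rather than a true obstacle, is ensuring in the infinite-dimensional case that $V=\textup{im}(Q)$ really is an $n$-dimensional closed subspace. This comes down to the standard fact that a finite-trace idempotent in $\mathcal{B}(\mathcal{H})$ is necessarily finite-rank with rank equal to its trace, so the block-matrix argument above makes sense. Smoothness of both directions, as well as their compatibility with the natural complex-analytic and affine-algebraic structures on the two sides, is visible from the explicit formulas $q+f$ and $Q\mapsto(q,Q-q)$, since the orthogonal projection onto the range of a rank-$n$ idempotent depends analytically on the idempotent.
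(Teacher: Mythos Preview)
Your proof is correct and follows essentially the same route as the paper: both identify the fiber $\textup{T}^*_q\mathbf{G}_n\mathcal{H}\cong\textup{Hom}(V^\perp,V)$ with the set of projections onto $V$ via $f\mapsto \pi(q)+f$, with inverse $Q\mapsto Q\vert_{V^\perp}$. Your block-matrix verification that any rank-$n$ idempotent $Q$ has the form $q+B$ is just a more explicit rendering of the paper's one-line claim that $f=q\vert_{V^\perp}$ inverts this map.
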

\begin{proof}
Due to \cref{perpstand}, we just need to show that
\begin{equation}
   \textup{Hom}(V^\perp,V)\cong\{\textup{Projections onto }V\}\;. 
\end{equation}
Let $q$ be any projection onto $V.$ We define $f\in \textup{Hom}(V^\perp,V)$ by
\begin{equation}\label{grass}
f=q\vert_{V^\perp}\;.
\end{equation}
Conversely, let $f\in \textup{Hom}(V^\perp,V)$ and let $\pi(q)$ be the orthogonal projection onto $V.$ We assume that $f$ is defined on all of $\mathcal{H}$ by extending it by $0$ on $V.$ We get a projection onto $V$ by defining
\begin{equation}
    q=\pi(q)+f\;.
\end{equation}
This is the inverse of \cref{grass}.
\end{proof}
A similar identification was described in \cite{leung}. This identification, together with the following one, will be assumed throughout this paper:
\begin{lemma}\label{tan}
Under the identification of \cref{emb},
\begin{equation}\label{tan}
 \textup{T}(\textup{T}^*\mathbf{G}\mathcal{H})\cong \{(q,A)\in \textup{T}^*\mathbf{G}\mathcal{H}\times \mathcal{B}(\mathcal{H}): qA+Aq=A\}\;.
\end{equation}
\end{lemma}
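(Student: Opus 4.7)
The plan is to establish both inclusions in the equality $\textup{T}_q(\textup{T}^*\mathbf{G}\mathcal{H}) = \{A \in \mathcal{B}(\mathcal{H}) : qA + Aq = A\}$ directly from the affine description of $\textup{T}^*\mathbf{G}\mathcal{H}$ given in \cref{emb}.

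For the inclusion $\subseteq$, I would simply linearize the defining equation, mirroring the proof of \cref{tan1}: if $q(t)$ is any smooth curve of rank–$n$ projections with $q(0) = q$ and $\dot{q}(0) = A$, then differentiating $q(t)^2 = q(t)$ at $t = 0$ gives $qA + Aq = A$. The constraint $\textup{Tr}(q) = n$ adds nothing further, since the trace of a finite–rank projection equals its rank and is therefore locally constant on the manifold of rank–$n$ projections.

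For the reverse inclusion, I would exhibit explicit curves through $q$ using the conjugation action of $GL(\mathcal{H})$, which preserves both idempotence and rank. Given $A$ satisfying $qA + Aq = A$, decompose $\mathcal{H} = \textup{Im}(q) \oplus \ker(q)$, in which $q$ acts as the identity on the first summand and as $0$ on the second. In these blocks, the equation $qA + Aq = A$ forces the two diagonal blocks of $A$ to vanish, leaving a map $A_{21} \colon \textup{Im}(q) \to \ker(q)$ and a map $A_{12} \colon \ker(q) \to \textup{Im}(q)$. Taking $X \in \mathcal{B}(\mathcal{H})$ with off–diagonal blocks $-A_{12}$ and $A_{21}$ gives $[X, q] = A$, so the smooth path $q(t) := e^{tX} q e^{-tX}$ lies in the rank–$n$ projections and has velocity $A$ at $t = 0$, as required.

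No genuine obstacle should arise: the entire argument reduces to a linearization together with a short block–matrix computation, with the group action ensuring that every algebraically admissible $A$ is geometrically realized. An alternative would be to invoke the vector–bundle structure implicit in the proof of \cref{emb}, where a neighborhood of $q$ is parametrized by a choice of nearby subspace $V \in \mathbf{G}\mathcal{H}$ together with an element of $\textup{Hom}(V^\perp, V)$; the tangent vectors then split into base contributions in $\textup{Hom}(V, V^\perp)$ and fibre contributions in $\textup{Hom}(V^\perp, V)$, which together are readily checked to span precisely $\{A : qA + Aq = A\}$.
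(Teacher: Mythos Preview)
Your argument is correct and follows the paper's approach: the paper's own proof is the single line ``This follows from the same argument as in \cref{tan1},'' i.e.\ linearize $q^2=q$ to obtain $qA+Aq=A$, which is exactly your forward inclusion. The paper does not spell out the reverse inclusion at all; your explicit realization of every such $A$ as $[X,q]$ via the conjugation action $t\mapsto e^{tX}qe^{-tX}$ is a clean addition that makes the dimension count (equivalently, the smoothness of the idempotent variety) unnecessary to invoke implicitly.
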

\begin{proof}
This follows from the same argument as in \cref{tan1}
\end{proof}
Since $\textup{T}^*\mathbf{G}\mathcal{H}$ is an affine subvariety of $\mathcal{B}(\mathcal{H}),$ it already inherits a K\"{a}hler structure given by the inner product. 
\subsection{Important Identities}
There are several important and simple–to–prove identities regarding $\textup{T}\textup{T}^*\mathbf{G}\mathcal{H}.$ We will record them here:
\begin{proposition}\label{identities}
Let $A,B\in\textup{T}_q\textup{T}^*\mathbf{G}\mathcal{H},$ ie.\  $qA+Aq=A,\,qB+Bq=B,$ and let $M\in\mathcal{B}(\mathcal{H}).$ Then
\begin{enumerate}
    \item $qAq=0.$
    \item $[q,[q,A]]=A.$
    \item\label{vec} $q[q,M]+[q,M]q=[q,M],$ ie.\ $[q,M]\in \textup{T}_q\textup{T}^*\mathbf{G}\mathcal{H},$
    \item $[q,AB]=0.$ In particular, $[q,[A,B]]=0.$
    \item\label{jajab} $i[A,q]i[B,q]=AB.$ In particular, $[i[A,q],i[B,q]]=[A,B].$
\end{enumerate}
\end{proposition}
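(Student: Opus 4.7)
The proof of each identity follows from careful but routine algebraic manipulation using the two defining relations $q^2 = q$ and the tangent-vector condition $qA + Aq = A$ (which we will use in its equivalent forms $qA = A - Aq$ and $Aq = A - qA$). The identities should be proved in order, since later ones feed on earlier ones.

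\textbf{Plan.} For (1), I multiply $qA + Aq = A$ on the right by $q$ and use $q^2 = q$ to obtain $qAq + Aq = Aq$, hence $qAq = 0$. For (2), I directly expand
\[
[q,[q,A]] = q(qA - Aq) - (qA - Aq)q = q^2 A - qAq - qAq + Aq^2,
\]
and simplify using $q^2 = q$ together with (1), which collapses this to $qA + Aq = A$. For (3), I expand $q[q,M] + [q,M]q = q^2 M - qMq + qMq - Mq^2$; the cross terms cancel and $q^2 = q$ gives $qM - Mq = [q,M]$.

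For (4), the idea is to push the $q$ past $A$ and past $B$ using the tangent-vector relations from opposite sides. Namely, $qA = A - Aq$ gives $qAB = AB - AqB$, while $Bq = B - qB$ gives $ABq = AB - AqB$. Comparing these two expressions yields $qAB = ABq$, that is $[q, AB] = 0$; taking $AB$ to be the commutator $[A,B]$ gives the second statement.

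For (5), the core of the proposition, I expand
\[
[A,q][B,q] = AqBq - AqqB - qABq + qAqB.
\]
Using $q^2 = q$ and (1), the last term vanishes. Using $Bq = B - qB$ together with $q^2 = q$, the first term becomes $AqBq = AqB - AqB = 0$. Using (4), the term $qABq = ABq^2 = ABq$. This leaves $[A,q][B,q] = -AqB - ABq = -(AqB + ABq) = -A(qB + Bq) = -AB$ by the tangent condition on $B$. Multiplying by $i \cdot i = -1$ gives $i[A,q]\,i[B,q] = AB$, and subtracting the same formula with $A, B$ swapped gives $[\,i[A,q], i[B,q]\,] = AB - BA = [A,B]$.

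The main obstacle is simply bookkeeping in item (5): one must not combine terms prematurely, and the simplification $AqBq = 0$ (rather than something more symmetric) is the mildly surprising step that unlocks the rest. Everything else is a two-line expansion.
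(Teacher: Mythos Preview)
Your proof is correct; each of the five identities is handled by exactly the kind of direct expansion one expects, and the bookkeeping in item (5), including the observation $AqBq = AqB - Aq^2B = 0$, is sound. The paper itself does not supply a proof of this proposition, merely labeling the identities ``simple--to--prove'' and recording them for later use, so your argument is the natural fleshing-out of what the author leaves implicit.
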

\section{Differential Geometry of $\textup{T}^*\mathbf{G}\mathcal{H}$}
In this section, we'll discuss the basic differential geometry of 
$\textup{T}^*\mathbf{G}\mathcal{H}$ from the perspective of its quantization, ie.\ the space of idempotents. The point is to define all differential–geometric concepts at the level of $C^*$–algebras.\ It's not immediately obvious, but this can be done as simply as it is done on $\mathbb{R}^n.$ In particular, we will describe the first three of four (algebraically independent) integrable almost complex structures $I,J,\hat{J}, \mathbf{J}$ such that 
\begin{equation*}
IJ=JI\;,\;\;\hat{J}J=J\hat{J}\;,\,I\mathbf{J}=-\mathbf{J}I\;,
\end{equation*}
together with an $I$–holomorphic symplectic form for which $J$ is a pointwise symplectic map.\ We will also give an intrinsic description of the exterior derivative and vector bundle structure.
\\\\There are some basic results concerning the space of idempotents that are proved in \cite{Lackman1} that we will assume.
\subsection{Two Commuting Complex Structures, Symplectic Form and Exterior Derivative}\label{hols}
We will use \cref{identities} in this section.
\begin{lemma}
Let $q\in \textup{T}^*\mathbf{G}\mathcal{H}.$ Then $A\mapsto i[A,q]$ defines an endomorphism of  $\textup{T}_q(\textup{T}^*\mathbf{G}\mathcal{H})$ that squares to $-1.$
\end{lemma}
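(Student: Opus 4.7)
The plan is to verify two things: (a) that for $A \in \textup{T}_q(\textup{T}^*\mathbf{G}\mathcal{H})$, the element $i[A,q]$ again lies in $\textup{T}_q(\textup{T}^*\mathbf{G}\mathcal{H})$, so that $J \colon A \mapsto i[A,q]$ really does define an endomorphism of the tangent space; and (b) that $J^2 = -\operatorname{id}$ pointwise. Both will follow immediately from the identities recorded in Proposition \ref{identities}.

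For (a), the defining equation of $\textup{T}_q(\textup{T}^*\mathbf{G}\mathcal{H})$ (via Lemma \ref{tan}) is $qX + Xq = X$. Part \ref{vec} of Proposition \ref{identities} states that for \emph{any} $M \in \mathcal{B}(\mathcal{H})$ the commutator $[q,M]$ satisfies this equation. Applying this to $M = A$ and multiplying by the scalar $i$ gives $i[A,q] \in \textup{T}_q(\textup{T}^*\mathbf{G}\mathcal{H})$, so $J$ is well-defined as an endomorphism.

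For (b), I would compute directly:
\begin{equation*}
J^2 A \;=\; J(i[A,q]) \;=\; i\bigl[\,i[A,q],\,q\,\bigr] \;=\; -\bigl[[A,q],\,q\bigr].
\end{equation*}
A short expansion (using $[X,Y] = -[Y,X]$ twice, or direct computation of both sides) shows $[[A,q],q] = [q,[q,A]]$. Part 2 of Proposition \ref{identities} then gives $[q,[q,A]] = A$, so $J^2 A = -A$, as required.

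I do not expect any genuine obstacle: once the identities in Proposition \ref{identities} are in hand, the statement reduces to a two-line algebraic verification. The only thing worth being careful about is the bookkeeping of signs when swapping commutators, to confirm that $[[A,q],q]$ really does equal $[q,[q,A]]$ and hence $A$, rather than $-A$.
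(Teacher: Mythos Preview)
Your proof is correct and essentially the same as the paper's: both verify the tangency condition and the squaring identity by the same bracket manipulations, with the only cosmetic difference that you invoke the packaged identities of Proposition~\ref{identities} (parts~2 and~\ref{vec}) whereas the paper expands $i[A,q]q + qi[A,q]$ and $i[i[A,q],q]$ directly using $q^2=q$, $qAq=0$, and $qA+Aq=A$. One small bookkeeping note: in step~(a) you have $i[A,q] = -i[q,A]$, so strictly speaking you are multiplying $[q,A]$ by the scalar $-i$ rather than $i$, but since the defining equation $qX+Xq=X$ is $\mathbb{C}$-linear this makes no difference.
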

\begin{proof}
Since $qAq=0,$ it follows that
\begin{align}
 &\nonumber i[A,q]q+qi[A,q]=iAq-iqA=i[A,q]\;,
 \\& i[i[A,q],q]=-[Aq-qA,q]=-(Aq-qA)q+q(Aq-qA)=-(Aq+qA)=-A\;.
 \end{align}
 The first line shows that the map preserves $\textup{T}_q(\textup{T}^*\mathbf{G}\mathcal{H})$ and the second line shows that it squares to $-1.$
\end{proof}
\begin{definition}`(\cite{Lackman1})
Let $\mathcal{H}$ be a complex Hilbert space.\ We define (integrable) \textbf{commuting} almost complex structures $I, J$ on $\textup{T}^*\mathbf{G}\mathcal{H}$ by 
\begin{equation}
    A\xmapsto{I} iA\;,\;\; A\xmapsto{J} i[A,q]
    \end{equation}
for $A\in \textup{T}_q\textup{T}^*\mathbf{G}\mathcal{H}.$ Furthermore, we define an involution of $\textup{T}\textup{T}^*\mathbf{G}\mathcal{H}$ the $K:=IJ.$ 
\end{definition}
\begin{definition}\label{holm}(\cite{Lackman1})
We define an $I$–holomorphic symplectic form by
\begin{equation}
    \Omega(A,B)=i\textup{Tr}(q[A,B])\;,
\end{equation}
where $A,B\in\textup{T}_q\textup{T}^*\mathbf{G}\mathcal{H}.$
\end{definition}
In \cite{Lackman1}, we showed that $\Omega$ is closed by showing that it is the trace of the curvature of a connection on a vector bundle. Here, we will explain how to directly and intrinsically compute its exterior derivative.\ To do so, we use the fact that
$\textup{T}^*\mathbf{G}\mathcal{H}$ has natural vector fields, which allows us to give \textit{pointwise} definitions of tensor fields that are normally defined using vector fields or coordinates.
\begin{lemma}\label{derivative}
Let $\omega$ be an $n$–form on $\textup{T}^*\mathbf{G}\mathcal{H}$ and for $A\in\textup{T}_q\textup{T}^*\mathbf{G}\mathcal{H}$ consider the vector field 
\begin{equation}
   p\mapsto \mathbf{A}_p:=[p,[q,A]]\;.\footnote{This is a vector field by identity \ref{vec}.}
\end{equation}
Then for $A_0,\ldots,A_{n}\in\textup{T}_q\textup{T}^*\mathbf{G}\mathcal{H},$ 
\begin{equation}
d\omega(A_0,\ldots,A_{n})=\sum_{j=0}^{n}(-1)^jA_{j}\omega(\mathbf{A}_{0},\ldots,\hat{\mathbf{A}}_{j},\ldots,\mathbf{A}_{n})\;.
\end{equation}
\end{lemma}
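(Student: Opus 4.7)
The plan is to invoke the classical invariant formula for the exterior derivative
\begin{align*}
d\omega(\mathbf{A}_0,\ldots,\mathbf{A}_n) &= \sum_{j=0}^n (-1)^j\,\mathbf{A}_j\,\omega(\mathbf{A}_0,\ldots,\widehat{\mathbf{A}}_j,\ldots,\mathbf{A}_n) \\
&\quad + \sum_{i<j}(-1)^{i+j}\omega([\mathbf{A}_i,\mathbf{A}_j],\mathbf{A}_0,\ldots,\widehat{\mathbf{A}}_i,\ldots,\widehat{\mathbf{A}}_j,\ldots,\mathbf{A}_n)\,,
\end{align*}
evaluated at $q$, and then to prove two things: first, that each vector field $\mathbf{A}_j$ takes the value $A_j$ at $q$, so that the derivation sum reduces to the stated formula; and second, that the Lie bracket sum vanishes at $q$.

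The first point is immediate from identity (2) of \cref{identities}: $\mathbf{A}_j|_q=[q,[q,A_j]]=A_j$, and so each term $\mathbf{A}_j\,\omega(\cdots)$ at $q$ is exactly the directional derivative $A_j\,\omega(\mathbf{A}_0,\ldots,\widehat{\mathbf{A}}_j,\ldots,\mathbf{A}_n)$ appearing in the statement.

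For the bracket terms, I would exploit that each $\mathbf{A}_j$ is \emph{linear} in $p$, with constant $C_j:=[q,A_j]$ independent of $p$. For vector fields $X(p)=[p,C_1]$ and $Y(p)=[p,C_2]$ on the ambient space $\mathcal{B}(\mathcal{H})$, the standard formula $[X,Y]_p=DY_p\cdot X(p)-DX_p\cdot Y(p)=[[p,C_1],C_2]-[[p,C_2],C_1]$ combined with the Jacobi identity in $\mathcal{B}(\mathcal{H})$ yields
\[
[\mathbf{A}_i,\mathbf{A}_j]_p=[p,[[q,A_i],[q,A_j]]]\,.
\]
At $p=q$, identity (5) of \cref{identities} gives $[q,A_i][q,A_j]=-A_iA_j$, hence $[[q,A_i],[q,A_j]]=-[A_i,A_j]$, and then identity (4) gives $[q,[A_i,A_j]]=0$. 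Consequently $[\mathbf{A}_i,\mathbf{A}_j]|_q=0$ and the bracket sum drops out of the Cartan formula, leaving precisely the stated identity. (Since each bracket is tangent at every projection by identity (3), the computation on the submanifold and in the ambient space agree.)

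The only real obstacle is the sign bookkeeping in the Jacobi manipulation and the flip $[q,A]=-[A,q]$ when invoking identity (5); once these are handled carefully, the argument is a short application of \cref{identities}.
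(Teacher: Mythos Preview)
Your proposal is correct and follows exactly the same approach as the paper: invoke the Cartan (Lie bracket) formula for $d\omega$, then use $\mathbf{A}_j|_q=A_j$ and $[\mathbf{A}_i,\mathbf{A}_j]|_q=0$. The paper merely asserts these two facts; you have supplied the verifications via \cref{identities}, so your argument is a fleshed-out version of the paper's one-line proof.
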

\begin{proof}
This follows from the Lie bracket formula for the exterior derivative together with the facts that $\mathbf{A}_q=A$ and for $A,B\in\textup{T}_q\textup{T}^*\mathbf{G}\mathcal{H},$ $[\mathbf{A},\mathbf{B}]\vert_{q}=0.$
\end{proof}
The following lemma is also useful:
\begin{lemma}\label{odd}
For any $k\ge 0,$ let $A_1,\ldots,A_{2k+1}\in \textup{T}_q\textup{T}^*\mathbf{G}\mathcal{H}.$ Then 
\begin{equation}
\textup{Tr}(A_1\cdots A_{2k+1})=0\;.
\end{equation}
\end{lemma}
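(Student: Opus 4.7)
The plan is to exploit the fact that every tangent vector to $\textup{T}^*\mathbf{G}\mathcal{H}$ at $q$ is ``off-diagonal'' with respect to the idempotent decomposition $1 = q + p$, where $p := 1 - q$. Indeed, $p$ is itself an idempotent satisfying $p^2 = 1 - 2q + q^2 = p$ and $qp = pq = 0$. From identity 1 of \cref{identities} we have $qAq = 0$, and from $qA + Aq = A$ together with $qAq = 0$ we also get
\begin{equation*}
pAp = (1-q)A(1-q) = A - qA - Aq + qAq = 0.
\end{equation*}
Hence each $A_j$ decomposes as $A_j = qA_jp + pA_jq$.

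Next I would split the trace as $\textup{Tr}(A_1\cdots A_{2k+1}) = \textup{Tr}(qA_1\cdots A_{2k+1}) + \textup{Tr}(pA_1\cdots A_{2k+1})$ and track how the off-diagonal structure propagates. Starting from $q$ on the left, $qA_1 = qA_1 p$ (the $pA_1q$ piece is killed by $q\cdot p = 0$), then $qA_1 p A_2 = qA_1 p A_2 q$, and so on, alternating. After an odd number $2k+1$ of factors the rightmost block projector has switched, giving
\begin{equation*}
q\,A_1 A_2\cdots A_{2k+1} = q\,A_1 A_2\cdots A_{2k+1}\cdot p.
\end{equation*}
By cyclicity of the trace, this equals $\textup{Tr}(pq\,A_1\cdots A_{2k+1}) = 0$ since $pq = 0$. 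Exactly the same argument, starting with $p$ on the left and tracking the alternation, shows $pA_1\cdots A_{2k+1} = pA_1\cdots A_{2k+1}\cdot q$, whose trace vanishes by cyclicity and $qp = 0$.

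There is no real obstacle here; the only thing to be careful about is the parity bookkeeping of the alternating blocks, which relies essentially on $2k+1$ being odd (this is what guarantees the cyclic shift produces a factor $pq$ or $qp$ rather than $q^2 = q$ or $p^2 = p$, so the corresponding even-length analogue is genuinely false). Implicit throughout is that we are in the setting where the trace and cyclicity are available, as is the case for the finite-rank / trace-class situation covered by the paper.
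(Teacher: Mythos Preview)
Your proof is correct. Both your argument and the paper's exploit the same underlying fact---tangent vectors are off-diagonal with respect to the idempotent $q$---together with cyclicity of the trace, but they organize the parity bookkeeping differently. The paper does not introduce $p=1-q$; instead it reduces (by linearity and cyclicity) to the case $A_1=qA_1$, writes
\[
\textup{Tr}(qA_1\cdots A_{2k+1})=\textup{Tr}(qA_1\cdots A_{2k+1}q)
\]
using $q^2=q$ and cyclicity, and then slides the rightmost $q$ past the even-length block $A_2\cdots A_{2k+1}$ in one step via identity~4 of \cref{identities}, namely $[q,AB]=0$, landing on $\textup{Tr}(qA_1q\,A_2\cdots A_{2k+1})=0$. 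Your alternation argument replaces that single application of identity~4 by an explicit induction using both projectors $q$ and $p$; this is slightly more elementary (it needs only $qA_jq=0$, not the commutation with products) at the cost of a few more lines.
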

\begin{proof}
Due to the fact that $A=qA+Aq,$ and the invariance of the trace under cyclic permutations, we only need to prove that this is true for $A_1$ such that $A_1=qA_1.$\footnote{$A\in \textup{T}_q\textup{T}^*\mathbf{G}\mathcal{H}$ implies that $qA,Aq\in \textup{T}_q\textup{T}^*\mathbf{G}\mathcal{H}.$} For this, we use that $q^2=q$ and that for $A,B\in\textup{T}_q\textup{T}^*\mathbf{G}\mathcal{H},\, [q,AB]=0,$ from which it follows that
\begin{equation}
\textup{Tr}(qA_1\cdots A_{2k+1})=\textup{Tr}(qA_1\cdots A_{2k+1}q)=\textup{Tr}(qA_1q\cdots A_{2k+1})=0\;,
\end{equation}
where the final equality follows from the fact that $qA_1q=0.$
\end{proof}
\begin{corollary}
$d\Omega=0$ and $\Omega(JA,JB)=\Omega(A,B).$
\end{corollary}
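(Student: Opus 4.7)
My plan is to handle the two assertions separately; both reduce to identities recorded in Proposition \ref{identities} and Lemma \ref{odd}, but in quite different ways.

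For the $J$–invariance $\Omega(JA,JB)=\Omega(A,B)$, the computation is essentially a single line. By definition $JA=i[A,q]$ and $JB=i[B,q]$, and identity (\ref{jajab}) of Proposition \ref{identities} says precisely that $[i[A,q],i[B,q]]=[A,B]$. Therefore $\Omega(JA,JB)=i\textup{Tr}(q[JA,JB])=i\textup{Tr}(q[A,B])=\Omega(A,B)$.

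For $d\Omega=0$, rather than wrestling directly with Lemma \ref{derivative} and the natural but nontrivial vector fields $\mathbf{A}_p=[p,[q,A]]$ (which would entangle the derivative of $\mathbf{A}$ with the trace in a bookkeeping-heavy way), I would exploit the fact that the defining formula $i\textup{Tr}(p[X,Y])$ makes sense verbatim for arbitrary $X,Y\in\mathcal{B}(\mathcal{H})$. This extends $\Omega$ to a $2$-form $\tilde\Omega$ on the ambient vector space $\mathcal{B}(\mathcal{H})$, with $\Omega=\iota^*\tilde\Omega$ for the inclusion $\iota\colon\textup{T}^*\mathbf{G}\mathcal{H}\hookrightarrow\mathcal{B}(\mathcal{H})$. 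Since $d$ commutes with pullback, it suffices to compute $d\tilde\Omega$ on $\mathcal{B}(\mathcal{H})$ and then restrict. Using constant vector fields on $\mathcal{B}(\mathcal{H})$ (which have vanishing Lie brackets), a direct computation yields
\begin{equation*}
d\tilde\Omega_p(X,Y,Z)=i\textup{Tr}(X[Y,Z])-i\textup{Tr}(Y[X,Z])+i\textup{Tr}(Z[X,Y]),
\end{equation*}
a cyclically antisymmetric expression independent of $p$. When restricted to $A_0,A_1,A_2\in\textup{T}_q\textup{T}^*\mathbf{G}\mathcal{H}$, every term is a trace of a three-fold product of tangent vectors, so it vanishes by Lemma \ref{odd}; consequently $d\Omega=0$.

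I don't anticipate any serious obstacle: the extension trick converts what would otherwise be a curvature-style calculation into an immediate appeal to Lemma \ref{odd}, and the $J$-invariance is essentially a restatement of identity (\ref{jajab}). The only small points to verify are that the stated $\tilde\Omega$ genuinely restricts to $\Omega$ along $\iota$ and that pullback commutes with $d$, both of which are standard.
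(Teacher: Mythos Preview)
Your proof is correct. The $J$--invariance argument is identical to the paper's: both invoke identity~(\ref{jajab}) directly.

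For $d\Omega=0$, however, you take a genuinely different route. The paper works \emph{intrinsically} via Lemma~\ref{derivative}, using the natural vector fields $\mathbf{A}_p=[p,[q,A]]$ on $\textup{T}^*\mathbf{G}\mathcal{H}$; after differentiating $\Omega(\mathbf{B},\mathbf{C})$ it obtains three terms, each of which vanishes by Lemma~\ref{odd}. You instead extend $\Omega$ to the ambient $\mathcal{B}(\mathcal{H})$, compute $d\tilde\Omega$ with constant vector fields, and then pull back---at which point Lemma~\ref{odd} kills every term. The paper itself flags exactly this alternative in the remark following the corollary, calling it ``slightly more easily'' computed; the trade-off is that the paper's argument is intrinsic to the $C^*$--algebra and does not appeal to the ambient embedding, which is one of the paper's stated methodological goals. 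Your version is shorter and avoids the bookkeeping around the $\mathbf{A}_p$ fields, at the cost of stepping outside the intrinsic framework.

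One minor remark: as you implicitly note, $d\tilde\Omega$ is \emph{not} zero on $\mathcal{B}(\mathcal{H})$ (by cyclicity of the trace the three terms combine to $3i\,\textup{Tr}(X[Y,Z])$), so it really is Lemma~\ref{odd} doing the work after restriction, not closedness of $\tilde\Omega$ itself.
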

\begin{proof}
The first part follows from the previous two lemmas, since
\begin{equation}
A\Omega(\mathbf{B},\mathbf{C})=i\textup{Tr}(A[B,C])+i\textup{Tr}(q[[A,[q,B]],C])+i\textup{Tr}(q[B,[A,[q,C]]])\;,
\end{equation}
and the latter lemma shows that each of these terms are zero. The second part follows from identity \ref{jajab}.
\end{proof}
\begin{remark}
The exterior derivative can be computed slightly more easily by observing that $\Omega$ has a natural extension to $\mathcal{B}(\mathcal{H}).$ However, the method we've described is intrinsic to the $C^*$–algebra.
\end{remark}
\subsection{Vector Bundle Structure}
Here we will describe the vector bundle structure of $\textup{T}^*\mathbf{G}\mathcal{H}$ from the perspective of its quantization. We recall that the zero section is identified with the space of Hermitian projections.
\subsubsection{Projection Map}
We'll first define the projection map and discuss its compatibility with $I, J.$ We'll then describe its derivative.
\begin{definition}
We define a map $\pi:\textup{T}^*\mathbf{G}\mathcal{H}\to \mathbf{G}\mathcal{H},$ where $\pi(q)$ is the orthogonal projection onto the image of $q.$    
\end{definition}
\begin{proposition}
Two projections $q,q'\in \textup{T}^*\mathbf{G}\mathcal{H}$ have the same image under $\pi$ if and only if $qq'=q'.$ 
\end{proposition}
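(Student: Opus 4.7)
The plan is to reduce the statement to an equality of images and then exploit the idempotency $q^2 = q$, which means that any (possibly non-Hermitian) projection $q$ acts as the identity on its image. Since $\pi(q)$ and $\pi(q')$ are the orthogonal projections onto the images $V := \operatorname{im}(q)$ and $V' := \operatorname{im}(q')$ respectively, we have $\pi(q) = \pi(q')$ if and only if $V = V'$, so the statement becomes: $V = V'$ if and only if $qq' = q'$.

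For the forward direction, assume $V = V'$. For any $x \in \mathcal{H}$, $q'x \in V' = V$, and since $q^2 = q$ implies $qv = v$ for every $v \in V$, we get $q(q'x) = q'x$, i.e.\ $qq' = q'$. For the converse, assume $qq' = q'$. Then for any $x$, $q'x = q(q'x) \in \operatorname{im}(q)$, so $V' \subseteq V$. Because $q$ and $q'$ both lie in $\textup{T}^*\mathbf{G}_n\mathcal{H}$, by \cref{emb} both are rank-$n$ projections, so $\dim V = \dim V' = n$, forcing $V = V'$.

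The only subtlety, and really the only thing to watch, is that the non-Hermitian nature of these projections doesn't break the argument: the key identity $qv = v$ for $v \in V$ uses only $q^2 = q$, not self-adjointness, and the rank condition from \cref{emb} gives equality of images from containment. No calculations are needed beyond these two one-line observations, so there is no real obstacle.
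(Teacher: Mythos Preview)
Your argument is correct and matches the paper's approach: the paper's proof handles only the implication $qq'=q'\Rightarrow \pi(q)=\pi(q')$ explicitly, via the same observation that $q$ then fixes $\operatorname{im}(q')$ and the common rank forces equality of images; you additionally spell out the (easy) forward direction that the paper leaves implicit.
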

\begin{proof}
If $qq'=q'$ then $q$ fixes the image of $q'.$ Since by assumption $q,q'$ have the same rank, their images must therefore be equal.
\end{proof}
\begin{corollary}
The distributions given by the $\pm 1$–eigenbundles of $K$ are integrable.\ The \\$+1$–eigenbundle is given by vectors tangent to the leaves of $\pi$ and the the $-1$–eigenbundle is given by vectors tangent to the leaves of $q\mapsto \pi(q^*).$
\end{corollary}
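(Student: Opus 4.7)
The plan is to compute $K$ explicitly, identify its $\pm 1$--eigenbundles algebraically, and then recognize each eigenbundle as the tangent distribution to the fibers of the corresponding projection map; integrability then follows at once because those fibers are smooth affine submanifolds.

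For $A\in\textup{T}_q\textup{T}^*\mathbf{G}\mathcal{H}$ one has $KA=I(JA)=i\cdot i[A,q]=[q,A]$. The eigenequation $KA=\epsilon A$ with $\epsilon\in\{\pm 1\}$ reads $qA-Aq=\epsilon A$, which, combined with the tangent condition $qA+Aq=A$, gives
\begin{equation*}
qA=\tfrac{1+\epsilon}{2}A\,,\qquad Aq=\tfrac{1-\epsilon}{2}A\,.
\end{equation*}
Hence the $+1$--eigenbundle at $q$ is $\{A:qA=A,\ Aq=0\}$ and the $-1$--eigenbundle is $\{A:qA=0,\ Aq=A\}$.

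To match the $+1$--eigenbundle with the tangent distribution to the fibers of $\pi$, I use the preceding proposition to identify that fiber as $\{q':qq'=q'\}$, and then use the proof of \cref{emb} to parametrize it as the affine subspace $\pi(q)+\textup{Hom}(V^\perp,V)\subset\mathcal{B}(\mathcal{H})$, where $V$ is the image of $q$ and an element of $\textup{Hom}(V^\perp,V)$ is viewed in $\mathcal{B}(\mathcal{H})$ via zero-extension on $V$. Differentiating a curve $t\mapsto\pi(q)+f(t)$ at $q$ gives a tangent vector $A=\dot f(0)$; since $q$ is the identity on $V$ while $A$ maps $V^\perp$ into $V$ and vanishes on $V$, one directly reads off $qA=A$ and $Aq=0$. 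Conversely, these two equations force the image of $A$ to lie in $V$ and $A|_V=0$, so $A\in\textup{Hom}(V^\perp,V)$.

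For the $-1$--eigenbundle I run the transposed argument. Observe that $\pi(q^*)=\pi(q'^*)$ iff the images of $q^*$ and $q'^*$ coincide iff $q^*q'^*=q'^*$ iff $q'q=q'$, so the fiber of $q\mapsto\pi(q^*)$ through $q$ consists of the projections sharing $\ker q$ and is again an affine subspace, parametrized using $\textup{Hom}((\ker q)^\perp,\ker q)$. The mirror infinitesimal computation now yields $qA=0$ and $Aq=A$, matching the $-1$--eigenbundle. Since each eigenbundle is pointwise tangent to the leaves of a genuine smooth foliation, involutivity is automatic; the only step that requires real care is the dual characterization of these fibers and verification of their affine structure, which hinges on using the equality of ranks to upgrade the inclusion $\ker q\subseteq\ker q'$ (implied by $q'q=q'$) to an equality.
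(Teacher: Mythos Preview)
Your argument is correct and follows precisely the approach the paper has in mind: the corollary is stated without proof immediately after the characterization $\pi(q)=\pi(q')\iff qq'=q'$, and your computation $KA=[q,A]$ together with the algebraic identification of the eigenbundles as $\{qA=A,\ Aq=0\}$ and $\{qA=0,\ Aq=A\}$ is exactly the intended reasoning. Your use of the affine parametrization from \cref{emb} to verify that these coincide with the tangent spaces to the fibers of $\pi$ and $\pi\circ *$ makes explicit what the paper leaves to the reader.
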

\begin{proposition}\label{anti}
Let $\mathcal{H}$ be $n$–dimensional. The projection map $\pi$ is $I$ and $J$–holomorphic.\ The adjoint map $^*$ is $I$–antiholomorphic and $J$–holomorphic. The map
\begin{equation}
\textup{T}^*\mathbf{G}_k\mathcal{H}\to \textup{T}^*\mathbf{G}_{n-k}\mathcal{H}\;,\;\;q\mapsto 1-q  
\end{equation}
is $I$–holomorphic, $J$–antiholomorphic and commutes with $^*.$ In the case that $n=2k,$ the composition 
\begin{equation}
\textup{T}^*\mathbf{G}_k\mathcal{H}\to \textup{T}^*\mathbf{G}_{k}\mathcal{H}\;,\;\;q\mapsto 1-q^*  
\end{equation}
is an $I$ and $J$–antiholomorphic involution and it is also a vector bundle map over $1-q.$
\end{proposition}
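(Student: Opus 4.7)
The plan is to verify each claim by computing the differential of the relevant map and then performing a short algebraic check; the only nontrivial part is computing $d\pi$, after which everything else follows quickly.

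For $\pi$, I would work in the block decomposition $\mathcal{H} = V \oplus V^{\perp}$ with $V = \textup{Im}(q)$ and $q = p + f$ as in \cref{emb}, where $p=\pi(q)$. A tangent vector $A$ with $qA + Aq = A$ is forced, by expanding in blocks, to satisfy $A_{11} = -fA_{21}$ and $A_{22} = A_{21}f$, so $A$ is parametrized freely by $A_{12}:V^{\perp}\to V$ and $A_{21}:V\to V^{\perp}$. Implicitly differentiating the characterizing equations $\pi(q)q = q$ and $q\pi(q) = \pi(q)$, together with the fact that $d\pi(A)$ is Hermitian and block-antidiagonal with respect to $p$, I would solve to obtain that $d\pi(A)$ is the unique Hermitian operator whose $(V\to V^{\perp})$-block equals $A_{21}$. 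A short check via \cref{stand} identifies the complex structure on $\mathbf{G}\mathcal{H}$ at $p$ as $I_{\mathbf{G}}B = i[B,p]$. Now $(iA)_{21} = iA_{21}$, and using $[A,q] = 2Aq - A$ one sees $([A,q])_{21} = A_{21}$ as well, so both $d\pi(iA)$ and $d\pi(i[A,q])$ have $(V\to V^{\perp})$-block $iA_{21}$; a one-line block computation shows $i[d\pi(A),p]$ is their common value. Hence $\pi$ is simultaneously $I$- and $J$-holomorphic.

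For the remaining maps, the differentials are immediate. The adjoint $q\mapsto q^{*}$ has differential $A\mapsto A^{*}$: the relation $(iA)^{*}=-iA^{*}$ gives $I$-antiholomorphicity, while $(i[A,q])^{*}=i[A^{*},q^{*}]$ equals the $J$-action at $q^{*}$, giving $J$-holomorphicity. The map $F(q)=1-q$ has differential $A\mapsto -A$, which commutes with multiplication by $i$ ($I$-holomorphic); at $1-q$ the structure $J$ acts as $X\mapsto i[X,1-q] = -i[X,q]$, so $dF(JA)=-i[A,q]$ and $J'(dF(A))=i[A,q]$ are opposite, giving $J$-antiholomorphicity. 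Commutation with $^{*}$ reads $F(q^{*})=1-q^{*}=(1-q)^{*}=F(q)^{*}$.

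In the case $n=2k$, the composition $G(q)=1-q^{*}$ is $I$- and $J$-antiholomorphic as a composite of the two previous maps, and an involution since $1-(1-q^{*})^{*}=q$. It descends to the map $p\mapsto 1-p$ on the base because $\textup{Im}(1-q^{*})=\ker q^{*}=V^{\perp}$ gives $\pi(1-q^{*})=1-\pi(q)$. On the fiber over $p$, the formula $p+f\mapsto (1-p)+(-f^{*})$ shows the fiberwise map is $f\mapsto -f^{*}$, which is $\mathbb{R}$-linear into the fiber $\textup{Hom}(V,V^{\perp})$ over $1-p$, confirming $G$ is a vector bundle map. The main obstacle throughout is simply keeping the block bookkeeping organized when deriving $d\pi$; no deeper difficulty arises.
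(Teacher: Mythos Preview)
Your argument is correct throughout. The paper states this proposition without proof, so there is no original argument to compare against. Your block computation of $d\pi$ is sound: expanding $qA+Aq=A$ with $q=p+f$ forces $A_{11}=-fA_{21}$ and $A_{22}=A_{21}f$, and differentiating $\pi(q)q=q$ together with the Hermitian block--antidiagonal form of $\textup{T}_p\mathbf{G}\mathcal{H}$ gives $(d\pi(A))_{21}=A_{21}$. With the base complex structure identified as $B\mapsto i[B,p]$, the $I$-- and $J$--holomorphicity of $\pi$ are then immediate. The verifications for $q\mapsto q^{*}$, $q\mapsto 1-q$, and $q\mapsto 1-q^{*}$ are all correct, including the fiberwise description $f\mapsto -f^{*}$ as an $\mathbb{R}$--linear map $\textup{Hom}(V^{\perp},V)\to\textup{Hom}(V,V^{\perp})$.

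One stylistic remark: the paper favours coordinate--free arguments, and the lemma immediately following this proposition characterizes $\pi_{*}(A)$ intrinsically by $q\,\pi_{*}(A)+A\,\pi(q)=\pi_{*}(A)$. From this one can check directly that $i[\pi_{*}(A),\pi(q)]$ satisfies the defining equation for both $\pi_{*}(iA)$ and $\pi_{*}(i[A,q])$, using only $qp=p$ and $pB+Bp=B$; this avoids the block decomposition entirely. Your explicit approach is equally valid and arguably easier to check line by line, while the intrinsic route matches the paper's $C^{*}$--algebraic spirit.
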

This next lemma describes $\pi_*:\textup{T}\textup{T}^*\mathbf{G}\mathcal{H}\to \textup{T}\mathbf{G}\mathcal{H}$
\begin{lemma}\label{pistar}
 Let $A\in\textup{T}_q\textup{T}^*\mathbf{G}\mathcal{H}.$ Then $\pi_*(A)\in\textup{T}_{\pi(q)}\mathbf{G}\mathcal{H}$ is the unique vector such that
\begin{equation}
    q\pi_*(A)+A\pi(q)=\pi_*(A)\;.
\end{equation}
\end{lemma}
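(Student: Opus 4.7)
The plan is to differentiate an algebraic identity that characterizes $\pi.$ Since $q^2=q,$ the projection $q$ restricts to the identity on its image $V,$ so the orthogonal projection $\pi(q)$ onto $V$ automatically satisfies
\begin{equation}
q\pi(q)=\pi(q)\;,\qquad \pi(q)q=q\;.
\end{equation}
I will differentiate the first of these two identities in order to read off the formula for $\pi_*(A).$

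For existence, I will pick any smooth curve $q(t)\in\textup{T}^*\mathbf{G}\mathcal{H}$ with $q(0)=q$ and $\dot q(0)=A.$ Since $\pi$ is smooth on the manifold of rank-$n$ idempotents, $t\mapsto\pi(q(t))$ is a smooth curve in $\mathbf{G}\mathcal{H},$ whose derivative at $t=0$ is by definition $\pi_*(A).$ Differentiating the identity $q(t)\pi(q(t))=\pi(q(t))$ at $t=0$ yields exactly $A\pi(q)+q\pi_*(A)=\pi_*(A),$ which is the claimed relation.

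For uniqueness, suppose $Y_1,Y_2\in\textup{T}_{\pi(q)}\mathbf{G}\mathcal{H}$ both satisfy the equation, and set $Y:=Y_1-Y_2,$ so that $qY=Y.$ By \cref{tan1} we also have $Y=Y^*$ and $\pi(q)Y+Y\pi(q)=Y.$ Using $\pi(q)q=q,$ the equation $qY=Y$ gives $\pi(q)Y=\pi(q)qY=qY=Y,$ and taking adjoints then yields $Y\pi(q)=Y.$ Substituting both into $\pi(q)Y+Y\pi(q)=Y$ produces $2Y=Y,$ hence $Y=0.$ The only real subtlety I anticipate is justifying smoothness of $\pi$ at non-Hermitian idempotents, but this is standard; alternatively one may take the algebraic equation itself as the definition of $\pi_*$ and verify independently that its solution is tangent to $\mathbf{G}\mathcal{H}$ at $\pi(q)$ and depends smoothly on $q.$
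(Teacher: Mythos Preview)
Your proof is correct and follows essentially the same approach as the paper: both differentiate the identity $q\,\pi(q)=\pi(q)$ (the paper phrases this as expanding $(q+\varepsilon A)(\pi(q)+\varepsilon\pi_*(A))=\pi(q)+\varepsilon\pi_*(A)$ to first order). Your version is in fact more complete, since you supply an explicit uniqueness argument via \cref{tan1}, which the paper leaves implicit.
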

\begin{proof}
Modulo terms of order $\mathcal{O}(\varepsilon^2),$ $\pi_*(A)$ is defined by the condition that 
\begin{equation}
    (q+\varepsilon A)(\pi(q)+\varepsilon \pi_*(A))=\pi(q)+\varepsilon \pi_*(A)\;.
    \end{equation}
Expanding proves the result.
\end{proof}
Recall that for any projection $q$ and $M\in\mathcal{B}(\mathcal{H}),$ $[q,M]\in \textup{T}_q\textup{T}^*\mathbf{G}\mathcal{H}$ (identity \ref{vec}). We have the following simple description of $\pi_*$:
\begin{corollary}\label{push}
If $M\in\mathcal{B}(\mathcal{H})$ is skew-hermitian, we have that
\begin{equation}
    \pi_*([q,M])=[\pi(q),M]\;.\end{equation}
If $M$ is Hermitian, we have that
\begin{equation}
    \pi_*([q,M])=[[\pi(q),M],\pi(q)]\;.
\end{equation}
\end{corollary}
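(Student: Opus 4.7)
The plan is to invoke \cref{pistar}, which characterizes $\pi_*([q,M])$ as the unique vector $X\in \textup{T}_{\pi(q)}\mathbf{G}\mathcal{H}$ satisfying
\[
qX + [q,M]\pi(q)=X.
\]
Thus for each proposed formula it suffices to check two things: (a) that the candidate $X$ is Hermitian and satisfies $\pi(q)X + X\pi(q) = X$ (so that it lies in $\textup{T}_{\pi(q)}\mathbf{G}\mathcal{H}$), and (b) that it satisfies the displayed equation. The computation is organized around the decomposition $q = p + f$ from the proof of \cref{emb}, where $p := \pi(q)$ and $f \in \textup{Hom}(V^\perp, V)$ (extended by zero on $V$), which yields the three basic relations $p^2 = p$, $\;pf = f$, and $fp = 0$.

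For the skew-Hermitian case I would take $X := [p, M]$. Hermiticity is immediate from $M^* = -M$ (giving $[p,M]^* = [p,M]$), and the identity $pX + Xp = X$ follows by a one-line expansion using $p^2 = p$. For the defining equation, write $qX + [q,M]p = (p+f)[p,M] + [(p+f),M]p$ and expand. The terms involving $f$ take the form $f pM - fMp$ and $fMp - fpM$, which cancel on account of $fp = 0$ (and $pf = f$ is not even needed here); what remains simplifies to $pM - Mp = [p,M] = X$.

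For the Hermitian case I would take $X := [[p, M], p] = 2pMp - pM - Mp$. This is manifestly Hermitian and a direct computation verifies $pX + Xp = X$. Checking $qX + [q,M]p = X$ again reduces to expanding with $q = p + f$: the purely $p$-terms produce $X$, and the $f$-contributions consist of $f(2pMp - pM - Mp)$ and $fMp - Mfp$. The relation $fp = 0$ kills $fpMp$, $fpM$, and $Mfp$, while $pf = f$ never enters because $X$ has no factor of $pf$ on the left. The surviving terms are $-fMp$ from $qX$ and $+fMp$ from $[q,M]p$, which cancel, leaving exactly $X$.

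The only genuine obstacle is conceptual rather than computational: one must recognize that $[p, M]$ is skew-Hermitian when $M$ is Hermitian, hence not tangent to $\mathbf{G}\mathcal{H}$, and that the correct Hermitian representative is the symmetrization $[[p,M], p]$, which has the effect of discarding the block-diagonal piece $pMp \oplus (1-p)M(1-p)$ before commuting. Once this is identified, both verifications are routine block-form bookkeeping, and uniqueness from \cref{pistar} completes the proof.
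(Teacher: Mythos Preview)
Your proof is correct. The paper states this corollary without proof, implicitly leaving it as a direct consequence of \cref{pistar}; your verification via the decomposition $q=\pi(q)+f$ with $fp=0$, $pf=f$ is precisely the natural way to fill in that gap, and the computations check out in both cases.
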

We end this section with a simple formula for $\pi,$ in the case of projective space:
\begin{proposition}
$\pi:\textup{T}^*\mathbb{P}\mathcal{H}\to \mathbb{P}\mathcal{H}$ is given by 
\begin{equation}
\pi(q)=\frac{qq^*}{\textup{Tr}(qq^*)}\;.
\end{equation} 
\end{proposition}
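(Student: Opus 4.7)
The plan is to use the rank-$1$ structure of $q$ and reduce the identity to a one-line computation. Since every rank-$1$ operator on $\mathcal{H}$ is a dyad, I would write $q = vw^*$ for some nonzero $v,w \in \mathcal{H}$. Then $\textup{image}(q) = \textup{span}(v)$, so the orthogonal projection onto $\textup{image}(q)$ is $vv^*/\|v\|^2$.

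A direct computation then gives $qq^* = (vw^*)(wv^*) = \|w\|^2\, vv^*$ and $\textup{Tr}(qq^*) = \|w\|^2 \|v\|^2$, whence
\begin{equation*}
\frac{qq^*}{\textup{Tr}(qq^*)} = \frac{vv^*}{\|v\|^2} = \pi(q).
\end{equation*}

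There is essentially no obstacle. The only subtlety is that the decomposition $q = vw^*$ is not unique, but both sides of the formula are manifestly independent of the choice: rescaling $v$ or $w$ multiplies $qq^*$ and $\textup{Tr}(qq^*)$ by the same positive scalar. Note also that the idempotency $q^2 = q$ (which is the extra condition $w^*v = 1$) is not needed for the formula itself; it is only needed so that $q$ genuinely defines a point of $\textup{T}^*\mathbb{P}\mathcal{H}$ in the first place.

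As a slightly more intrinsic alternative, one can prove the identity by verifying directly that $P := qq^*/\textup{Tr}(qq^*)$ is Hermitian, has image equal to $\textup{image}(q)$, and satisfies $P^2 = P$; the last of these reduces to the identity $(qq^*)^2 = \textup{Tr}(qq^*) \cdot qq^*$, which again follows from rank-$1$-ness of $q$ since then $q^*q$ is a positive rank-$1$ operator with trace $\textup{Tr}(qq^*)$.
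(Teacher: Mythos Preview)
Your proof is correct. The paper's one-line proof invokes the rank-$1$ identity $qMq=\textup{Tr}(qM)q$ (applied with $M=q^*$ to get $qq^*q=\textup{Tr}(qq^*)\,q$, hence $P:=qq^*/\textup{Tr}(qq^*)$ is a Hermitian idempotent fixing the image of $q$), which is exactly your ``intrinsic alternative.'' Your primary argument via the dyad decomposition $q=vw^*$ is a slightly more concrete route to the same fact; indeed the identity $qMq=\textup{Tr}(qM)q$ is itself most transparently verified by writing $q=vw^*$. The dyad computation has the minor advantage of making the formula for $\pi(q)$ visibly equal to $vv^*/\|v\|^2$ in one step, while the paper's phrasing stays in the coordinate-free spirit of the rest of the article. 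Either way there is nothing to add.
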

\begin{proof}
This follows from the fact that for any rank–$1$ projection $q$ and endomorphism $M,$ $qMq=\textup{Tr}(qM)q.$
\end{proof}
\subsubsection{Vector Space Structure of Fibers and Compactification}
This next lemma shows that the leaves of $\pi$ are naturally affine spaces. This will be used to describe the vector space structures.
\begin{lemma}
Let $\mathcal{L}$ be a leaf containing $\pi(q).$ The map
\begin{equation}
    \mathcal{L}\times T_{\pi(q)}\mathcal{L}\to\mathcal{L}\;,\;\;(q,A)\mapsto q+A
\end{equation}
is well–defined and describes a free and transitive action of $T_{\pi(q)}\mathcal{L}$ on $ \mathcal{L}.$
\end{lemma}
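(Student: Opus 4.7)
The plan is to reduce everything to two algebraic identities: (i) for every $q' \in \mathcal{L}$, writing $p := \pi(q)$, we have $pq' = q'$ and $q'p = p$; (ii) every $A \in \textup{T}_p\mathcal{L}$ satisfies $pA = A$ and $Ap = 0$. Identity (i) follows from the proposition characterizing $\pi(q') = \pi(q)$ as $qq' = q'$: applied symmetrically it yields $q'q = q$, and specializing one of the two projections to $p$ itself (which lies in $\mathcal{L}$ since $\pi(p) = p$) gives $pq' = q'$ and $q'p = p$. For (ii), the corollary identifying the $+1$-eigenbundle of $K = IJ$ with vectors tangent to the leaves of $\pi$ gives $K_p A = A$. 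Since $KA = IJA = -[A,q] = [q,A]$ at a general point $q$, the eigenvalue equation reads $[p, A] = A$, which combined with the tangent space relation $pA + Ap = A$ forces $pA = A$ and $Ap = 0$. In particular $A \in \textup{Hom}(V^\perp, V)$ with $V := \textup{im}(p)$.

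With these in hand, well-definedness is a short computation. Expanding $(q+A)^2 = q + qA + Aq + A^2$ and applying (i)--(ii) gives $qA = q(pA) = (qp)A = pA = A$, $Aq = (Ap)q = 0$, and $A^2 = (Ap)A = 0$, so $(q+A)^2 = q + A$. To see $\pi(q+A) = p$, I observe that $\textup{im}(q) = V$ and $\textup{im}(A) = \textup{im}(pA) \subseteq V$, while for $v \in V$ we have $Av = A(pv) = (Ap)v = 0$ and $qv = v$, so $(q+A)v = v$; hence $\textup{im}(q+A) = V$, which gives $\pi(q+A) = p$ and automatically $\textup{Tr}(q+A) = \dim V = n$, placing $q+A$ in $\mathcal{L}$.

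Freeness is immediate from the cancellation $q+A = q \Rightarrow A = 0$. For transitivity, given $q, q' \in \mathcal{L}$, I set $A := q' - q$; then (i) yields $pA = pq' - pq = q' - q = A$ and $Ap = q'p - qp = p - p = 0$, so $A \in \textup{T}_p\mathcal{L}$, and by construction $q + A = q'$. The only step requiring genuine care is the preliminary description (ii) of $\textup{T}_p\mathcal{L}$; once it is pinned down, every remaining step is a one-line manipulation with the relations $p^2 = p$, $pq = q$, $qp = p$.
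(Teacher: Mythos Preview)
Your proof is correct and follows essentially the same route as the paper. The paper's argument is the terse two-line version of yours: it asserts directly that tangency to a leaf gives $qA=A$, then deduces $A^2=Aq=0$ and $(q+A)^2=q+A$, $q(q+A)=q+A$. You have unpacked exactly why $qA=A$ holds (via $pA=A$ and $qp=p$) and why $Aq=0$ holds (via $pq=q$ and $Ap=0$), and you have also supplied the freeness and transitivity arguments, which the paper leaves implicit.
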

\begin{proof}
Since $A$ is tangent to a leaf, $qA=A.$ Therefore, $A^2=Aq=0,$ hence $(q+A)^2=q+A$ and $q(q+A)=q+A.$ 
\end{proof}
As a result of the previous lemma, we can describe the vector bundle structure as follows:
\begin{definition}
Let $q,q'$ be in the same leaf and let $r\in\mathbb{C}.$ We define
\begin{align}
    & r\cdot q:= (1-r)\pi(q)+rq\;,
    \\& q\oplus q':=q+q'-\pi(q)\;.
\end{align}
This equips each leaf with the structure of a vector space.
\end{definition}
This next proposition concerns the Fubini–Study metric–induced vector bundle isomorphism $\textup{T}{G}\mathcal{H}\to \textup{T}^*{G}\mathcal{H}$
\begin{proposition}
With respect to the identification \cref{perpstand}, the map
\begin{equation}
    \textup{T}_q{G}\mathcal{H}\to \textup{T}^*_q{G}\mathcal{H}\;,\;\;A\mapsto \big(B\mapsto \textup{Tr}(AB)\big)
\end{equation}
is given by $A\mapsto qA,$ ie.
\begin{equation}
    \textup{Tr}(AB)=\textup{Re}(\textup{Tr}(qAB)).
\end{equation}
\end{proposition}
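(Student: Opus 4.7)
The plan is to verify the formula by unwinding the identifications of Propositions \ref{stand} and \ref{perpstand}. First I would check that $A\mapsto qA$ really lands in $\textup{Hom}(V^\perp,V)$, where $V=\textup{im}(q)$: identity (1) of \cref{identities}, namely $qAq=0$, implies that $qA$ annihilates $V$, and since $q$ projects onto $V$, the range of $qA$ sits inside $V$.

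Next, I would compute what functional on $\textup{T}_q\mathbf{G}\mathcal{H}$ is represented by $qA$ via the pairing of \cref{perpstand}. Under \cref{stand}, a tangent vector $B$ corresponds to $B|_V\in\textup{Hom}(V,V^\perp)$, and extending $B|_V$ by zero on $V^\perp$ gives the operator $Bq$ on $\mathcal{H}$. So the functional associated to $qA$ in $\textup{T}^*_q\mathbf{G}\mathcal{H}$ is
\begin{equation*}
    B\mapsto \textup{Re}\bigl(\textup{Tr}((qA)(Bq))\bigr)=\textup{Re}\bigl(\textup{Tr}(qAB)\bigr),
\end{equation*}
where the last equality uses cyclicity of the trace and $q^2=q$.

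The remaining step is the algebraic identity between $\textup{Tr}(AB)$ and $\textup{Re}(\textup{Tr}(qAB))$. Decomposing $A=qA+Aq$ and $B=qB+Bq$, expanding $AB$, and applying $qAq=qBq=0$ together with $q^2=q$ kills two of the four terms, leaving $AB=qABq+AqB$. Taking traces and using cyclicity gives $\textup{Tr}(AB)=\textup{Tr}(qAB)+\textup{Tr}(qBA)$, while the Hermiticity of $A$, $B$, $q$ yields $\textup{Tr}(qBA)=\overline{\textup{Tr}(qAB)}$, so $\textup{Tr}(AB)$ is (a constant multiple of) the real part of $\textup{Tr}(qAB)$, matching the stated identity.

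There is no serious obstacle; the proof is entirely a matter of tracking which identification is being used at each step, and the whole argument reduces to \cref{identities} together with cyclicity of the trace. The only mildly delicate point is keeping straight the two sides of the perfect pairing in \cref{perf}, i.e.\ the distinction between $\textup{Hom}(V^\perp,V)$ and $\textup{Hom}(V,V^\perp)$, and realising $B|_V$ as the operator $Bq$ so that the pairing can be evaluated as a trace on $\mathcal{H}$ rather than on $V$.
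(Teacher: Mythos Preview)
Your proof is correct and matches the paper's approach; the paper's one-line argument computes $\textup{Re}(\textup{Tr}(qAB))=\textup{Tr}(qAB)+\textup{Tr}(BAq)=\textup{Tr}(qAB)+\textup{Tr}(AqB)=\textup{Tr}((qA+Aq)B)=\textup{Tr}(AB)$, using exactly the same ingredients (Hermiticity, cyclicity, and the tangent-space relation) that you use in your third step, just applying $qA+Aq=A$ directly rather than first expanding $AB$ into four terms. Your additional verification that $qA\in\textup{Hom}(V^\perp,V)$ and your unwinding of the pairing via $Bq$ are helpful context that the paper leaves implicit.
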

\begin{proof}
$\textup{Re}(\textup{Tr}(qAB))=\textup{Tr}(qAB)+\textup{Tr}(BAq)=\textup{Tr}(qAB)+\textup{Tr}(AqB)=\textup{Tr}(AB).$
\end{proof}
\begin{proposition}
We have an embedding
\begin{equation}\label{compa}
    \textup{T}\mathbb{P}\mathcal{H}\xhookrightarrow{f} \mathbb{P}\mathcal{H}\times \mathbb{P}\mathcal{H}\;,\;\;(q,A)\mapsto\Big(q,\frac{q+A+AqA}{1+\textup{Tr}(qA^2)}\Big)
\end{equation}
whose image consists exactly of all $(p,q)$ such that $pq\ne 0.$ In addition, $t\mapsto f(q,tA)$ is a curve in $\mathbb{P}\mathcal{H}$ beginning at $q$ and whose derivative at $t=0$ is $A.$ Furthermore, it converges to 
\begin{equation}\label{limit}
    \frac{AqA}{\textup{Tr}(qA^2)}
\end{equation}
as $t\to\infty.$ This limiting point is invariant under nonzero complex multiplication of $A,$ ie.\ for 
\begin{equation}
    A'=(x+yJ)A\;,\;x^2+y^2\ne 0\;,
\end{equation}
we have that
\begin{equation}
    \frac{A'qA'}{\textup{Tr}(qA'^2)}=\frac{AqA}{\textup{Tr}(qA^2)}\;.
\end{equation}Furthermore, $q$ and this limiting point multiply to zero.
\end{proposition}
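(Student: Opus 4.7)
The plan is to verify each assertion by direct computation, exploiting the rank-1 identity $qMq = \textup{Tr}(qM)\,q$ together with the tangent-space relations of \cref{identities}. Setting $\tau := \textup{Tr}(qA^2)$, one has $qA^2 = A^2q = qA^2q = \tau q$ (using identity 4), and consequently $AqA = A^2 - \tau q$. I would first check $f(q,A)\in\mathbb{P}\mathcal{H}\times\mathbb{P}\mathcal{H}$: expanding $(q+A+AqA)^2$ term by term, the identity $qAq = 0$ kills $qAqA$ and $AqAq$, the displayed simplifications give $A\cdot AqA = \tau qA$, $AqA\cdot A = \tau Aq$, and $(AqA)^2 = A(qA^2q)A = \tau AqA$, while the remaining cross-terms combine via $qA+Aq = A$. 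Collecting yields $(1+\tau)(q+A+AqA)$. Combined with $\textup{Tr}(q)=1$, $\textup{Tr}(A) = 2\textup{Tr}(qAq) = 0$, and $\textup{Tr}(AqA) = \tau$, this gives a rank-1 projection of trace $1$.

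To describe the image and prove injectivity, I would exhibit an explicit inverse on $\{(q,p):pq\ne 0\}$. A cyclic-trace computation using $pqp = \textup{Tr}(pq)\,p$ shows $\|pq\|_{HS}^2 = \textup{Tr}((pq)^*(pq)) = \textup{Tr}(pqp) = \textup{Tr}(pq)$, so $pq\ne 0$ is equivalent to $\textup{Tr}(pq) > 0$. Given such $(q,p)$, set $s := 1/\textup{Tr}(pq)$ and
\begin{equation*}
A := s(pq+qp) - 2q.
\end{equation*}
Hermiticity is clear, and $qA+Aq = A$ reduces via $qpq = q/s$ to a short identity. To verify $f(q,A) = (q,p)$, I would compute $Aq = spq - q$ and $qA = sqp - q$, then $AqA = (Aq)A = spqA - qA = sp - s(pq+qp) + q$ after applying $pqp = p/s$; the three terms $q$, $A$, $AqA$ then telescope to $sp = (1+\tau)p$, forcing $s = 1+\tau$ from the trace count. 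This gives bijectivity onto the described set, and smoothness of the inverse on the open locus $\textup{Tr}(pq) > 0$ is evident, so $f$ is an embedding.

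The three remaining assertions are quick. Differentiating $(q+tA+t^2AqA)/(1+t^2\tau)$ at $t=0$ returns $A$, and dividing numerator and denominator by $t^2$ shows the $t\to\infty$ limit equals $AqA/\tau$. For the invariance under $A' := (x+yJ)A = xA + iy[A,q]$, the identities $[A,q]q = Aq$ and $q[A,q] = -qA$ (both immediate from $qAq = 0$) yield $A'q = (x+iy)Aq$ and $qA' = (x-iy)qA$, whence $A'qA' = (x+iy)(x-iy)AqA = (x^2+y^2)AqA$ and, taking traces, $\textup{Tr}(qA'^2) = (x^2+y^2)\tau$, so the ratio is preserved. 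Finally, $q\cdot AqA = (qAq)A = 0$, establishing the orthogonality at infinity.

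The main obstacle is the explicit inverse verification, which involves several triple products of $p$ and $q$ and a short cascade of cancellations; the individual steps are routine once $pqp = p/s$ and $qpq = q/s$ are in hand, but the bookkeeping is the most substantive piece of the proof.
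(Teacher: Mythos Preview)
The paper states this proposition without proof, deferring instead to the related compactification result for $\textup{T}^*\mathbf{G}\mathcal{H}$ in \cref{comp}; your direct computational approach is therefore the natural way to establish it, and the calculations you outline are correct.

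There is one small logical gap in the bijectivity step. You verify only that your proposed inverse $g:(q,p)\mapsto(q,\,s(pq+qp)-2q)$ satisfies $f\circ g=\textup{id}$, which gives surjectivity onto $\{(q,p):pq\ne 0\}$ but not injectivity of $f$. You also do not check that $f$ actually lands in this set. Both are quick: writing $p=(q+A+AqA)/(1+\tau)$ one has $pq=(q+Aq)/(1+\tau)$ (since $AqAq=0$), whence $\textup{Tr}(pq)=1/(1+\tau)>0$, so the image lies where claimed and $s=1+\tau$. Then
\[
s(pq+qp)-2q=(2q+Aq+qA)-2q=A,
\]
establishing $g\circ f=\textup{id}$ and completing the bijection. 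With this addition your argument is complete.
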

This map describes a compactification of $\textup{T}\mathbb{P}\mathcal{H},$ an idea we'll return to in \cref{comp}, where we describe the compactification of $\textup{T}^*\mathbf{G}\mathcal{H}$ and where more detail will be provided. This is equivalent to the standard vector bundle compactification in algebraic geometry,\footnote{This is given by taking the direct sum with the trivial bundle and projectivizing the fibers of the resulting bundle.} ie.\ it is a fiberwise compactification of complex vector spaces.
\begin{remark}\label{intr}
The canonical curves $t\mapsto f(q,tA)$ allow us to compute exterior derivatives intrinsically. Essentially, this means that $\mathbb{P}\mathcal{H}$ can be used as a local model for manifolds, rather than Euclidean space. The van Est map can be used to give an intrinsic definition of integration, see \cite{Lackman2}.
    \end{remark}

\subsubsection{The Connection and Third Commuting Complex Structure}
\begin{definition}\label{connection}
We define a connection on $\pi:\textup{T}^*\mathbf{G}\mathcal{H}\to \mathbf{G}\mathcal{H},$ given by the splitting  
\begin{equation}
\textup{T}_{\pi(q)}\mathbf{G}\mathcal{H}\xrightarrow[]{H}\textup{T}_q \textup{T}^*\mathbf{G}\mathcal{H}\;,\;\;   H(A)=[q,[\pi(q),A]]\;.
\end{equation}
\end{definition}
\begin{lemma}\label{part2}
The curvature of the connection of \cref{connection} is given by
\begin{equation}
    \mathcal{F}_q(A,B)C=C[A,B]-[A,B]C\;.
\end{equation}
\end{lemma}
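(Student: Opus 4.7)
The plan is to compute the curvature directly as the vertical part of $[H\tilde A, H\tilde B] - H[\tilde A, \tilde B]$ for convenient vector field extensions of $A, B$. Setting $p := \pi(q)$ and $V :=$ image of $p$, I would choose skew-Hermitian $M_A, M_B \in \mathcal{B}(\mathcal{H})$ that are block off-diagonal with respect to $\mathcal{H} = V \oplus V^\perp$ and satisfy $A = [M_A, p]$, $B = [M_B, p]$. The vector fields $\tilde A(p') := [M_A, p']$ and $\tilde B(p') := [M_B, p']$ then extend $A, B$ to $\mathbf{G}\mathcal{H}$, with Lie bracket $[\tilde A, \tilde B](p') = [[M_A, M_B], p']$ from the Jacobi identity.

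Writing $\mathbf{N}_A(p') := [p', [M_A, p']]$ so that $H\tilde A(q') = [q', \mathbf{N}_A(\pi(q'))]$, I would apply the Leibniz rule to the Lie bracket of such vector fields, use $\pi_* H\tilde A(q) = A$, and consolidate a double commutator via Jacobi, obtaining
\begin{equation*}
[H\tilde A, H\tilde B](q) - H[\tilde A, \tilde B](q) = \bigl[q,\; [\mathbf{N}_A, \mathbf{N}_B] + d\mathbf{N}_B|_p(A) - d\mathbf{N}_A|_p(B) - \mathbf{N}_{[M_A, M_B]}(p)\bigr].
\end{equation*}

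The main obstacle is simplifying the expression inside the outer bracket, and this is where the block off-diagonal choice of $M_A, M_B$ pays off. From $pM_A + M_A p = M_A$ and $pM_A p = 0$ one gets $\mathbf{N}_A(p) = -M_A$, and identity \ref{jajab} specialized to the zero section yields $[\mathbf{N}_A(p), \mathbf{N}_B(p)] = [[p, A], [p, B]] = -[A, B]$; in particular $[M_A, M_B] = -[A, B]$. Differentiating $\mathbf{N}_A(p')$ in the direction $B$ and applying the Jacobi identity $[M_A, B] - [M_B, A] = -[p, [M_A, M_B]]$ gives $d\mathbf{N}_B|_p(A) - d\mathbf{N}_A|_p(B) = 2[A, B] + [p, [p, [M_A, M_B]]]$. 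The identity $[q, [A, B]] = 0$ from \cref{identities} then forces $[p, [A, B]] = 0$, so $[p, [p, [M_A, M_B]]] = -[p, [p, [A, B]]] = 0$, and likewise $\mathbf{N}_{[M_A, M_B]}(p) = -[p, [p, [M_A, M_B]]] = 0$. Everything collapses to $-[A, B] + 2[A, B] = [A, B]$, so the curvature equals $[q, [A, B]]$; since $[A, B]$ commutes with $p$, this equals $[q - p, [A, B]] = C[A, B] - [A, B] C$ with $C := q - p$ the vertical displacement of $q$ in its affine fiber, matching the stated formula after $\mathbb{C}$-linear extension to arbitrary vertical tangent vectors at $q$.
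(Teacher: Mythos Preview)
Your argument is correct and lands on the stated formula $\mathcal{F}(A,B)C=[C,[A,B]]$. One harmless slip: with the standard bracket convention $[\tilde A,\tilde B]=D\tilde B(\tilde A)-D\tilde A(\tilde B)$, the Jacobi identity actually gives $[\tilde A,\tilde B](p')=-[[M_A,M_B],p']$, which flips the sign in front of $\mathbf{N}_{[M_A,M_B]}(p)$ in your master formula. Since you correctly show $\mathbf{N}_{[M_A,M_B]}(p)=-[p,[p,[M_A,M_B]]]=0$, this has no effect on the outcome.

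Your route, however, differs from the paper's. You use the infinitesimal unitary fields $p'\mapsto[M_A,p']$, whose Lie bracket is nonzero, and then track four separate contributions (the $[\mathbf{N}_A,\mathbf{N}_B]$ term, two directional derivatives of $\mathbf{N}$, and the $\mathbf{N}_{[M_A,M_B]}$ term) before the block off--diagonal hypothesis collapses everything. The paper instead appeals to \cref{derivative}: take the vector fields $\mathbf{A}_{p'}=[p',[p,A]]$ on $\mathbf{G}\mathcal{H}$, which satisfy $\mathbf{A}_p=A$ and $[\mathbf{A},\mathbf{B}]|_p=0$. With the Lie bracket already vanishing at the base point, the curvature reduces to a single commutator of covariant derivatives and one reads off $[C,[A,B]]$ in one line. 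The paper's choice buys a much shorter computation; yours is more laborious but has the merit of making the block structure and the role of identities \ref{vec}--\ref{jajab} of \cref{identities} fully explicit.
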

\begin{proof}
This is a straightforward computation using \cref{derivative}.
\end{proof}
\begin{proposition}
The $(0,2)$–part of $\mathcal{F}$ vanishes, ie. for all $q\in\mathbf{G}\mathcal{H},$ $\mathcal{F}_q(A+iJA,B+iJB)=0.$
\end{proposition}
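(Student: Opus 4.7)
The plan is a direct computation using the curvature formula of \cref{part2} and identity (1) of \cref{identities}. The crux is the observation that the ``antiholomorphic'' combinations $A + iJA$ collapse to $2qA$, after which the vanishing of the curvature is immediate for trivial algebraic reasons.

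Concretely, first I would unpack $JA = i[A,q]$ to get $iJA = -[A,q] = qA - Aq$. Combining this with the tangent-space relation $A = qA + Aq$ (which applies since $A \in \textup{T}_q \mathbf{G}\mathcal{H} \subset \textup{T}_q \textup{T}^*\mathbf{G}\mathcal{H}$, and $q = \pi(q)$ is Hermitian) immediately yields
\begin{equation*}
    A + iJA = 2qA, \qquad B + iJB = 2qB.
\end{equation*}
Next, I would compute
\begin{equation*}
    [A + iJA,\, B + iJB] = 4(qAqB - qBqA),
\end{equation*}
and invoke identity (1) of \cref{identities}, namely $qAq = 0$, to conclude that $qAqB = (qAq)B = 0$ and symmetrically $qBqA = 0$, so the bracket is zero. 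Finally, since the curvature formula $\mathcal{F}_q(X,Y)C = C[X,Y] - [X,Y]C$ of \cref{part2} is bilinear in $(X,Y)$ and extends naturally to the complexified tangent space, substituting $X = A + iJA$ and $Y = B + iJB$ gives $\mathcal{F}_q(A+iJA,\,B+iJB) = 0$.

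I do not expect a real obstacle here: once the identity $A + iJA = 2qA$ is in hand, the remainder is one line. The only care needed is to recognize that base tangent vectors at a Hermitian $q$ satisfy both $qA + Aq = A$ and $qAq = 0$, so that \cref{identities} applies verbatim, and to check that the curvature formula of \cref{part2} extends complex-bilinearly — which it does automatically, as both sides of $\mathcal{F}_q(X,Y)C = C[X,Y] - [X,Y]C$ are polynomial in $X$ and $Y$.
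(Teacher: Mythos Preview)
Your proposal is correct and is precisely the direct computation the paper alludes to: the paper's proof reads simply ``Direct computation,'' and your argument supplies exactly those details, reducing $A+iJA$ to $2qA$ via $qA+Aq=A$ and then using $qAq=0$ to kill the bracket.
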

\begin{proof}
Direct computation.
\end{proof}
\begin{definition}
We let $\hat{J}$ be the almost complex structure determined by \cref{connection}.
\end{definition}
In fact, $\hat{J}$ is the natural complex structure of $\textup{T}^*\mathbf{G}\mathcal{H}$ by considering it as the cotangent bundle of a complex manifold.
\begin{proposition}\label{idhat}
Let $A\in\textup{T}_q\mathbf{G}\mathcal{H}.$ Then $\pi_*((HJA)^*)=i[A,\pi(q^*)].$
\end{proposition}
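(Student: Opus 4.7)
The plan is to unpack $HJA$ using the definitions, take the adjoint, and then identify the result as a skew--Hermitian commutator with $q^*$ so that Corollary~\ref{push} applies.

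First I would compute $HJA$ directly. Writing $\pi=\pi(q)$ and using that $JA=i[A,\pi]$ on the base, the inner bracket $[\pi,[A,\pi]]$ simplifies via the tangency conditions for $A\in\textup{T}_\pi\mathbf{G}\mathcal{H}$: expanding and using identity~1 of \cref{identities} (so that $\pi A\pi=0$) together with $\pi A+A\pi=A$ gives $[\pi,[A,\pi]]=-A$. Hence $[\pi,JA]=-iA$ and
\begin{equation*}
HJA=[q,[\pi,JA]]=[q,-iA]=i[A,q].
\end{equation*}

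Next I would take the adjoint. Since $A$ is Hermitian, a direct computation gives $(i[A,q])^*=i[A,q^*]$. Note that $q^*$ is itself an element of $\textup{T}^*\mathbf{G}\mathcal{H}$ (since $q^2=q$ implies $(q^*)^2=q^*$), and by identity~3 of \cref{identities} applied at $q^*$, the vector $i[A,q^*]=[q^*,-iA]$ lies in $\textup{T}_{q^*}\textup{T}^*\mathbf{G}\mathcal{H}$, so $\pi_*$ applies to it at the point $q^*$.

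Finally I would push forward via Corollary~\ref{push}. Since $A$ is Hermitian, $-iA$ is skew--Hermitian, so the first case of Corollary~\ref{push} yields
\begin{equation*}
\pi_*\bigl((HJA)^*\bigr)=\pi_*\bigl([q^*,-iA]\bigr)=[\pi(q^*),-iA]=i[A,\pi(q^*)],
\end{equation*}
which is the desired identity. There is no serious obstacle: the main thing to check carefully is the algebraic simplification $[\pi,[A,\pi]]=-A$ (which uses only identity~1 and the definition of $\textup{T}_\pi\mathbf{G}\mathcal{H}$), and that the adjoint is tangent at $q^*$ so that Corollary~\ref{push} is actually applicable.
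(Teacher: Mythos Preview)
Your proof is correct. The paper states this proposition without proof, so there is no argument to compare against; the route you take---collapsing $[\pi,[A,\pi]]$ to $-A$ so that $HJA=i[A,q]$, observing that the adjoint sends this to $i[A,q^*]=[q^*,-iA]\in\textup{T}_{q^*}\textup{T}^*\mathbf{G}\mathcal{H}$, and then invoking the skew--Hermitian case of Corollary~\ref{push} at the point $q^*$---is exactly the natural one given the surrounding lemmas.
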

\begin{proposition}
$[J,\hat{J}]=0.$
\end{proposition}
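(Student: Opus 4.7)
The plan is to exploit the horizontal/vertical splitting $\textup{T}_q\textup{T}^*\mathbf{G}\mathcal{H} = H_q \oplus V_q$ determined by the connection of \cref{connection}, where $V_q = \{A : qA = A,\; Aq = 0\}$ is the tangent space to the fiber of $\pi$ through $q$. On $V_q$ the complex structure $\hat{J}$ acts as the natural complex structure of the affine fiber, which under the identification with $\textup{Hom}(V^\perp, V)$ is just multiplication by $i$; and a direct calculation gives $JA_v = i[A_v,q] = iA_v q - iqA_v = -iA_v$. Thus on $V_q$ both $J$ and $\hat{J}$ are scalar endomorphisms and commute trivially.

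For the horizontal piece I would establish the global formula
\[\hat{J}A \;=\; iA + \hat{\psi}(\pi_* A),\qquad \hat{\psi}(B) := H(I_{\mathbf{G}}B) - iH(B),\]
valid for every $A \in \textup{T}_q\textup{T}^*\mathbf{G}\mathcal{H}$, where $I_{\mathbf{G}}$ is the complex structure on $\mathbf{G}\mathcal{H}$ making $\pi$ $I$-holomorphic (\cref{anti}). This follows immediately by decomposing $A = H(\pi_* A) + A_v$ and applying $\hat{J}$ to each piece. Two properties of $\hat{\psi}$ do the real work: first, $\hat{\psi}(B)\in V_q$, since $\pi_* H(I_{\mathbf{G}}B) = I_{\mathbf{G}}B = \pi_*(iH(B))$ by the $I$-holomorphy of $\pi$; second, $\hat{\psi}$ is $\mathbb{C}$-\emph{antilinear}, i.e.\ $\hat{\psi}(I_{\mathbf{G}}B) = -i\hat{\psi}(B)$, which is a one-line manipulation using $I_{\mathbf{G}}^2 = -1$ and the $\mathbb{R}$-linearity of $H$.

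With this in hand the commutation drops out. Using $IJ = JI$ (so $J(iA) = iJA$) and that $J$ acts as $-i$ on any vertical vector such as $\hat{\psi}(\pi_* A)$,
\[J\hat{J}A \;=\; iJA + J\hat{\psi}(\pi_* A) \;=\; iJA - i\hat{\psi}(\pi_* A).\]
By \cref{anti}, $\pi$ is also $J$-holomorphic; since $\mathbf{G}\mathcal{H}$ carries a unique natural complex structure this gives $J_{\mathbf{G}} = I_{\mathbf{G}}$, hence $\pi_*(JA) = I_{\mathbf{G}}\pi_* A$, and so
\[\hat{J}JA \;=\; iJA + \hat{\psi}(I_{\mathbf{G}}\pi_* A) \;=\; iJA - i\hat{\psi}(\pi_* A)\]
by antilinearity of $\hat{\psi}$. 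The two expressions agree, yielding $[J,\hat{J}] = 0$.

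The main obstacle is bookkeeping around the global formula for $\hat{J}$ and the identification $J_{\mathbf{G}} = I_{\mathbf{G}}$. The latter should be immediate from the uniqueness of the natural complex structure on $\mathbf{G}\mathcal{H}$ through which both $I$ and $J$ must factor via $\pi$; if a more direct check is desired it can be done on the zero section using \cref{push} and \cref{pistar}, where $\pi_*$ is essentially the identity and both $IB$ and $JB$ reduce to the standard Fubini–Study complex structure.
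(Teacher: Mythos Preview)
Your argument is correct, and it takes a genuinely different route from the paper. The paper's one--line proof invokes \cref{ext}: under the open dense embedding $q\mapsto(\pi(q),\pi(q^*))$ into $\mathbf{G}\mathcal{H}\times\mathbf{G}\mathcal{H}$, the structure $J$ becomes the product structure $(J_{p'},J_{q'})$ while $\hat{J}$ is given by the explicit formula $(A,B)\mapsto (J_{p'}A,\,-J_{q'}B+J_{q'}A-J_{q'}^2J_{p'}A)$, and commutation is then a direct symbol--by--symbol check on the product. Your approach instead stays on $\textup{T}^*\mathbf{G}\mathcal{H}$ and exploits only the connection splitting, the already--established facts $IJ=JI$ and that $\pi$ is both $I$-- and $J$--holomorphic (\cref{anti}), and the observation that on vertical vectors $J=-i$ while $\hat{J}=i$. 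The global identity $\hat{J}A=iA+\hat\psi(\pi_*A)$ with $\hat\psi$ vertical--valued and $\mathbb{C}$--antilinear is a clean way to package this; it makes the commutation transparent without ever writing $\hat{J}$ in closed form. What your route buys is self--containment: it does not need the compactification section at all and would adapt to any connection--induced almost complex structure on a bundle over a complex base with these holomorphy properties. What the paper's route buys is an explicit, reusable formula for $\hat{J}$ on the compactification, so the commutation becomes one instance of a computation that also yields the extension of $\hat{J}$ to the boundary divisor.
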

\begin{proof}
This follows from the formula for $\hat{J}$ given in \cref{ext}.
 \end{proof}
\section{Compactification of $\textup{T}^*\mathbf{G}\mathcal{H}$ and Another Description of $I,J,\hat{J}$}\label{comp}
Here we will describe the compactification $\textup{T}^*\mathbf{G}\mathcal{H}\xhookrightarrow{}\mathbf{G}\mathcal{H}\times\mathbf{G}\mathcal{H}$ and show that $I,J,\hat{J}$ extend to $\mathbf{G}\mathcal{H}\times\mathbf{G}\mathcal{H}.$ We obtain simple formulas.
\begin{lemma}
Suppose $p,q\in\textup{T}^*\mathbf{G}\mathcal{H}$ are such that $\pi(p)=\pi(q),\,\pi(p^*)=\pi(q^*).$ Then $p=q.$
\begin{proof}
From the assumptions it follows that $q^*p^*=p^*.$ Taking adjoints gives $pq=p.$ Since the assumptions also imply that $pq=q,$ the result follows.
\end{proof}
\end{lemma}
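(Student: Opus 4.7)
The plan is to reduce everything to the algebraic characterization of the fiber equivalence relation already established in the previous proposition, namely that $\pi(p) = \pi(q)$ is equivalent to $pq = q$ (and, by swapping the roles of $p$ and $q$, also to $qp = p$).

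First I would apply this proposition directly to the hypothesis $\pi(p) = \pi(q)$ to extract the identity $qp = p$. Then I would apply it to the adjointed hypothesis $\pi(p^*) = \pi(q^*)$: this gives $p^* q^* = q^*$, and taking adjoints yields $qp = q$. Stacking these two equations, $p = qp = q$, closes the argument.

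The only step that requires any care is the observation that the previous proposition applies to $p^*$ and $q^*$ at all; for this I would invoke the fact that $p^*, q^*$ are themselves rank–$n$ idempotents (since $(p^*)^2 = (p^2)^* = p^*$ and the trace is conjugation–invariant), so they lie in $\textup{T}^*\mathbf{G}\mathcal{H}$ and the proposition may be invoked verbatim on them. There is no genuine obstacle: the lemma is purely a matter of combining the image–level information from $\pi(p) = \pi(q)$ with the kernel–level information encoded in $\pi(p^*) = \pi(q^*)$ (recalling that $\mathrm{image}(p^*) = (\ker p)^\perp$), and an idempotent is uniquely determined by its image and kernel. The algebraic route above is just the cleanest realization of this idea entirely at the $C^*$–algebra level, consistent with the coordinate–free spirit of the paper.
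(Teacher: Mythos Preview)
Your proof is correct and is essentially the same argument as the paper's: both use the fiber characterization $\pi(p)=\pi(q)\iff pq=q$ twice (once directly, once on adjoints) to obtain two equalities that sandwich $p$ and $q$. The only cosmetic difference is that you arrive at $p=qp=q$ whereas the paper arrives at $p=pq=q$, and you spell out explicitly why $p^*,q^*\in\textup{T}^*\mathbf{G}\mathcal{H}$, a step the paper leaves implicit.
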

\begin{corollary}
With respect to the product complex structure on the codomain, the map \begin{equation}\label{com}
\textup{T}^*\mathbf{G}\mathcal{H}\xhookrightarrow{}\mathbf{G}\mathcal{H}\times\mathbf{G}\mathcal{H}\;,\;\;q\mapsto (\pi(q),\pi(q^*))  
\end{equation} 
is a $J$-holomorphic, open and dense embedding.\ Its image exactly consists of all $(p,q)$ such that 
\begin{equation}
\mathcal{H}=p(\mathcal{H})\oplus q(\mathcal{H})^{\perp}\;.
\end{equation}
With respect to the complex structure $(J,-J),$\footnote{$\mathbf{G}\mathcal{H}$ is a $J$–complex submanifold of its cotangent bundle, so we use the same notation for its complex structure.} it is $I$–holomorphic.
\end{corollary}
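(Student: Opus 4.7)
The plan is to establish, in order: injectivity, the precise description of the image, openness and density, the two holomorphicity claims, and finally the embedding property.

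\textbf{Injectivity} is immediate from the preceding lemma, since $\pi(q)$ and $\pi(q^*)$ together determine $q$. For the \textbf{image characterization}, writing $V := q(\mathcal{H})$ and $W := \ker q$, one has $\mathcal{H} = V \oplus W$ (from $q^2=q$), while $q^*$ has image $W^\perp$ and kernel $V^\perp$. Hence $\pi(q) = P_V$ and $\pi(q^*) = P_{W^\perp}$, and the pair satisfies the stated direct sum condition. Conversely, any pair $(p, r)$ of orthogonal projections with $p(\mathcal{H}) \oplus r(\mathcal{H})^\perp = \mathcal{H}$ specifies a unique (non-Hermitian) rank-$n$ projection along this decomposition, which maps back to $(p, r)$. \textbf{Openness and density} then follow because the image condition is equivalent (by matching dimensions) to the transversality condition $p(\mathcal{H}) \cap r(\mathcal{H})^\perp = \{0\}$, which defines an open dense subset of $\mathbf{G}\mathcal{H} \times \mathbf{G}\mathcal{H}$.

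For \textbf{holomorphicity}, I would invoke \cref{anti}: $\pi$ is both $I$- and $J$-holomorphic, and $^*$ is $J$-holomorphic and $I$-antiholomorphic. Composing, $q \mapsto \pi(q^*)$ is $J$-holomorphic, so together with $J$-holomorphicity of $\pi$ the map $f : q \mapsto (\pi(q), \pi(q^*))$ intertwines $J$ with $(J, J)$. For the $I$-statement, $q \mapsto \pi(q^*)$ intertwines $I$ with $-J$ (the $I$-antiholomorphicity of $^*$ inserts the sign flip that $\pi$'s $I$-to-$J$ property then carries through), so $f$ intertwines $I$ with $(J, -J)$, giving $I$-holomorphicity into the stated complex manifold.

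The \textbf{main technical step} is differential injectivity, which promotes the holomorphic bijection onto its open image to a biholomorphism. If $f_*(A) = 0$ then by \cref{pistar} applied at $q$ and at $q^*$ one obtains $A \pi(q) = 0$ and $A^* \pi(q^*) = 0$, translating respectively to $A|_V = 0$ and $\textup{Im}(A) \subset W$. Using the tangent-space relation $A = qA + Aq$ together with $q|_V = \textup{id}_V$ (since $q$ is a projection onto $V$) forces $Aq = 0$, hence $A = qA$, whose image lies in $V$; combined with the previous inclusion this gives $\textup{Im}(A) \subset V \cap W = 0$, so $A = 0$. With injectivity of both the map and its differential, and source and target both of complex dimension $2\dim_{\mathbb{C}}\mathbf{G}\mathcal{H}$, the holomorphic inverse function theorem upgrades this to a holomorphic open embedding onto the open dense subset described above.
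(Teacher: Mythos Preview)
Your argument is correct and follows essentially the same route as the paper: injectivity via the preceding lemma, holomorphicity via \cref{anti}, and openness/density via the image description. Your image characterization (projection along the given direct-sum decomposition) is the conceptual version of the paper's explicit construction of the preimage, and your direct verification of differential injectivity using \cref{pistar} supplies a step the paper leaves implicit (relying instead on the fact that a holomorphic bijection between equidimensional complex manifolds is automatically a biholomorphism). One minor wording point: in deducing $Aq=0$ from $A|_V=0$, the relevant fact is $\textup{Im}(q)=V$ rather than $q|_V=\textup{id}_V$, but your conclusion is unaffected.
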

\begin{proof}
That its image is open and dense follows from the description of its image.\ The statement about it being $J$ (respecively, $I$) holomorphic follows from previous results. Suppose that $\mathcal{H}=p(\mathcal{H})\oplus q(\mathcal{H})^{\perp}.$ Define
\begin{equation}
A\vert_{p(\mathcal{H})}:p(\mathcal{H})\to q(\mathcal{H})^{\perp}\;,\;\;A=1-q
\end{equation}
and extend it by zero on $q(\mathcal{H})^\perp.$ Then
\begin{equation}
Aq=A\;,\;\;qA=0\;,
\end{equation}
and therefore $qA^*=A^*,\,A^*q=0,$ which implies that $\pi(q+A^*)=q.$ Furthermore, $(q+A)p=p,$
which proves that $\pi(q+A)=p.$ Conversely, suppose that $q'\mapsto (p,q).$ Then for $0\ne v\in p(\mathcal{H}),$
\begin{equation}
    0<\langle v,v\rangle =\langle q'^*v,v\rangle=\langle v,q'v\rangle\;,
\end{equation}
which shows that $v\not\in q(\mathcal{H})^{\perp}.$ This implies the result.
\end{proof}
\begin{corollary}\label{ext}
The complex structures $I,J, \hat{J},$ the involution $^*$ and the two projection maps $\textup{T}^*\mathbf{G}\mathcal{H}\to \mathbf{G}\mathcal{H}$ all extend to $\mathbf{G}\mathcal{H}\times \mathbf{G}\mathcal{H}.$ For $(A,B)\in \textup{T}_{(p,q)}(\mathbf{G}\mathcal{H}\times\mathbf{G}\mathcal{H}),$ the extension of $\hat{J}$ is given by
\begin{equation}
(A,B)\mapsto (J_pA,-J_qB+J_qA-J_q^2J_pA)\;,
\end{equation}
where $J_xA=i[A,x].$
\end{corollary}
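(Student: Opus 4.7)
The plan is to first handle the extensions of $^*$, the two projection maps, and $I, J$ (which are immediate), and then derive the formula for $\hat{J}$. Under the embedding $\tilde q\mapsto (\pi(\tilde q),\pi(\tilde q^*))$, the adjoint $\tilde q\mapsto \tilde q^*$ extends to the swap $(p,q)\mapsto(q,p)$; the two projection maps extend to the two coordinate projections of $\mathbf{G}\mathcal{H}\times\mathbf{G}\mathcal{H}$; and the preceding corollary identifies $I$ with $(J_p,-J_q)$ and $J$ with $(J_p,J_q)$. Each of these is defined on all of $\mathbf{G}\mathcal{H}\times\mathbf{G}\mathcal{H}$, hence gives the claimed extensions.

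The real work is deriving the formula for $\hat{J}$. Using the connection-induced splitting $X = H(\pi_* X) + X_v$ with $X_v$ vertical, and noting that on vertical vectors $\hat{J}$ acts as ambient multiplication by $i$ (the fibers of $\pi$ at a Hermitian projection $p$ identify with $\textup{Hom}(p(\mathcal{H})^\perp,p(\mathcal{H}))$ with its natural complex structure), I would write
\begin{equation*}
\hat{J}X = H(J_pA) + i\bigl(X - H(A)\bigr)\,, \qquad A := \pi_*X.
\end{equation*}
The first component of the push-forward is then $\pi_*(\hat{J}X) = J_p A$, from the splitting property $\pi_*\circ H = \textup{id}$ together with $\pi_*(X - H(A)) = 0$.

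For the second component, I would take adjoints to obtain $(\hat{J}X)^* = H(J_pA)^* - iX^* + iH(A)^*$ and apply $\pi_*$ at $\tilde q^*$ termwise. \cref{idhat} directly gives $\pi_*(H(J_pA)^*) = J_qA$, and applying it with $A$ replaced by $-J_pA$ yields $\pi_*(H(A)^*) = -J_qJ_pA$. The subtle step, and the main obstacle, is handling the two ``$i$'' terms: $\pi_*$ is \emph{not} $I$-complex linear, so they cannot simply be pulled out. Rather, since $\tilde q\mapsto\pi(\tilde q^*)$ is $I$-antiholomorphic by \cref{anti}, one has $\pi_*(iX^*) = J_q\pi_*(X^*) = J_qB$, and similarly $\pi_*(iH(A)^*) = -J_q^2J_pA$. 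Summing the three contributions yields exactly $J_qA - J_qB - J_q^2J_pA$, as claimed.

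To finish, I would observe that the resulting formula is polynomial in $p,q,A,B$ and hence smoothly extends to all of $\mathbf{G}\mathcal{H}\times\mathbf{G}\mathcal{H}$. A direct check shows that it preserves the tangent bundle of the product (using that $J_q$ sends Hermitian elements into $\textup{T}_q\mathbf{G}\mathcal{H}$) and that $\hat{J}^2 = -\textup{id}$ (the cross terms cancel via $J_q^2|_{\textup{T}_q\mathbf{G}\mathcal{H}} = -\textup{id}$). Integrability on the full product then follows by continuity from integrability on the open dense image of $\textup{T}^*\mathbf{G}\mathcal{H}$.
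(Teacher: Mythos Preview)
Your argument is correct and is essentially the computation the paper leaves implicit; the paper states this result as a corollary without giving a separate proof, relying on the preceding corollary (the $I$- and $J$-holomorphy of the embedding) together with \cref{idhat} and \cref{anti}. Your derivation is exactly the natural way to flesh this out: split $X$ via the connection, push each piece forward through $\tilde q\mapsto(\pi(\tilde q),\pi(\tilde q^*))$, and use \cref{idhat} for the horizontal part and the $I$-holomorphy of $\pi$ for the ``$i$'' terms.

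Two minor comments on presentation. First, your sentence ``$\pi_*$ is not $I$-complex linear'' is a little misleading, since \cref{anti} says precisely that $\pi$ \emph{is} $I$-holomorphic; what you mean (and use correctly) is that $\pi_*(iY)=J\,\pi_*(Y)$ rather than $i\,\pi_*(Y)$, because the target carries $J$ and not the ambient $I$. Second, in verifying $\hat J^2=-\mathrm{id}$ on the product, the cancellation you need is $J_q^3=-J_q$ on the image of $J_q$ (since $J_qM\in\textup{T}_q\textup{T}^*\mathbf{G}\mathcal{H}$ for \emph{any} $M$ by identity~\ref{vec}, and then identity~2 applies), not merely $J_q^2|_{\textup{T}_q\mathbf{G}\mathcal{H}}=-\mathrm{id}$; your phrasing slightly obscures this but the computation goes through.
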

\begin{corollary}
The image of
\begin{equation}
\textup{T}^*\mathbb{P}\mathcal{H}\xhookrightarrow{}\mathbb{P}\mathcal{H}\times\mathbb{P}\mathcal{H}\;,\;\;q\mapsto (\pi(q),\pi(q^*))
\end{equation}
exactly consists of all $(p,q)$ such that $pq\ne 0.$ Furthermore, its complement is preserved by $(J,-J).$
\begin{remark}
The set $\{pq=0\}$ can be described as the zero set of a section of a line bundle (the box product of the anticanonical and canonical bundles), see \cref{sect}. 
\end{remark}
\begin{proof}
The first statement follows from the previously corollary. For the second part, if $pq=0,$ then $(A,B)\in\textup{T}_{(p,q)}(\mathbb{P}\mathcal{H}\times\mathbb{P}\mathcal{H})$ if and only if $A\in \textup{T}_{p}\mathbb{P}\mathcal{H},\,B\in \textup{T}_{q}\mathbb{P}\mathcal{H}$ and
\begin{equation}
pB+Aq=0\;.
\end{equation}
Note that, this equation implies that $pB=pBq,\,Aq=pAq.$ We then have that
\begin{equation}
-p[B,q]+[A,p]q=-pBq-pAq=-(pB+Aq)=0\;,
\end{equation}
which implies the result.
\end{proof}
\end{corollary}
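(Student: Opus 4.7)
The plan is to prove the two assertions separately, specializing the preceding corollary for the first and performing a direct tangent-space calculation for the second.

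For the image description, I would specialize the preceding corollary to the rank-$1$ case. That result identifies the image of $\textup{T}^*\mathbb{P}\mathcal{H}\xhookrightarrow{}\mathbb{P}\mathcal{H}\times\mathbb{P}\mathcal{H}$ as those $(p,q)$ satisfying $\mathcal{H}=p(\mathcal{H})\oplus q(\mathcal{H})^\perp.$ For rank-$1$ Hermitian projections, $p(\mathcal{H})$ is a line and $q(\mathcal{H})^\perp$ a hyperplane, so this splitting holds exactly when the line is not contained in the hyperplane, i.e.\ when $p(\mathcal{H})\not\perp q(\mathcal{H}).$ Since $p$ and $q$ are the orthogonal projections onto these respective subspaces, orthogonality of images is in turn equivalent to $pq=0,$ so the image is exactly $\{(p,q):pq\ne 0\}.$

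For the claim that the complement $\{pq=0\}$ is preserved by $(J,-J),$ I would first identify the tangent space to this subvariety at a point $(p,q)$ with $pq=0$ by differentiating the defining equation along a curve $(p_t,q_t),$ which yields the linear condition
\[
pB+Aq=0,\qquad A\in \textup{T}_p\mathbb{P}\mathcal{H},\;B\in \textup{T}_q\mathbb{P}\mathcal{H}.
\]
I would then verify that the pair $(J_pA,-J_qB)=(i[A,p],-i[B,q])$ satisfies the same constraint, which reduces to showing $-p[B,q]+[A,p]q=0.$

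The computational core is a short algebraic manipulation: expanding the brackets and using $pq=0$ eliminates the terms $pqB$ and $Apq,$ leaving $-pBq-pAq.$ Combining the tangency conditions $A=pA+Ap$ and $B=qB+Bq$ with $p^2=p$ and $q^2=q$ gives $pBq=pB$ and $pAq=Aq,$ so the expression collapses to $-(pB+Aq),$ which vanishes by the tangency condition. The main obstacle is really just bookkeeping: juggling $pq=0,$ the two tangency conditions, and idempotency of $p$ and $q$ simultaneously. Once these identities are assembled in the right order the computation is forced, and no deeper structure of the embedding or of the complex structures is needed.
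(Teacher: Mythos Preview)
Your proposal is correct and follows essentially the same route as the paper: specializing the previous corollary for the image description, then differentiating the constraint $pq=0$ to obtain $pB+Aq=0$ and verifying via the identities $pBq=pB$, $pAq=Aq$ (from $pq=0$ together with the tangency conditions) that $(i[A,p],-i[B,q])$ satisfies the same constraint. Your write-up is in fact slightly more explicit than the paper's in deriving $pBq=pB$ and $pAq=Aq$, but the argument is identical.
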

\begin{remark}
\Cref{com} identifies a complex manifold, $\textup{T}^*\mathbf{G}\mathcal{H},$ with a submanifold of $\mathbf{G}\mathcal{H}\times\mathbf{G}\mathcal{H},$ which is the square of the fixed point set of an antiholomorphic involution. Loosely speaking, this is a decomposition into real and imaginary parts.
\end{remark}
\begin{exmp}
For $\textup{dim}\,\mathcal{H}=2,$ the complement of $\textup{T}^*\mathbb{P}\mathcal{H}\subset \mathbb{P}\mathcal{H}\times \mathbb{P}\mathcal{H}$ is given by the image of $\mathbb{P}\mathcal{H}\xhookrightarrow{} \mathbb{P}\mathcal{H}\times \mathbb{P}\mathcal{H},\,q\mapsto (q,1-q).$
\end{exmp}
\section{Poisson Geometry}
Here, we'll describe the canonical Poisson bracket on the cotangent bundle and prove that there is a morphism of Lie algebras  
\begin{equation}
    (\mathcal{B}(\mathcal{H}),[\cdot,\cdot])\to (\textup{T}^*\mathbf{G}\mathcal{H},\{\cdot,\cdot\})\;.
    \end{equation}
There is also such a morphism of Lie algebras with $\{\cdot,\cdot\}$ given by the Poisson bracket of $\Omega,$ see (\cite{Lackman1}). This is in stark contrast to the well–known fact that there is no reasonable morphism in the opposite direction.
\begin{proposition}
The $I$-complexification of the tautological 1–form is given by
\begin{equation}
    (q,A)\mapsto -\textup{Tr}(\pi(q)A)\;,
\end{equation}
ie.\ the real part is the tautological 1–form.
\end{proposition}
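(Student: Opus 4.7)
The plan is to trace the tautological 1-form through the identifications \cref{perpstand} and \cref{emb} so that it becomes a concrete trace formula on $\mathcal{B}(\mathcal{H})$, then simplify using the tangent-vector equations together with \cref{pistar}, and finally observe that the resulting expression is complex linear in $A$.

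First I would identify the covector $\alpha_q \in \textup{T}^*_{\pi(q)}\mathbf{G}\mathcal{H}$ represented by a point $q \in \textup{T}^*\mathbf{G}\mathcal{H}$. Since $q$ and $\pi(q)$ are projections with common image $V$, a direct check gives $q\pi(q)=\pi(q)$ and $\pi(q)q=q$, whence $q-\pi(q)$ vanishes on $V$ and sends $V^\perp$ into $V$. That is, $q-\pi(q)$ equals the element $f = q|_{V^\perp} \in \textup{Hom}(V^\perp, V)$ from \cref{emb}, and under \cref{perpstand} it represents the covector $\alpha_q(g) = \textup{Re}(\textup{Tr}((q-\pi(q))g))$. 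By definition of the tautological 1-form,
\[
\theta_q(A) = \alpha_q(\pi_* A) = \textup{Re}(\textup{Tr}((q-\pi(q))\pi_* A)).
\]

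Next I would simplify this trace using two observations. First, for any tangent vector $X$ at a projection $p$ (in either $\textup{T}^*\mathbf{G}\mathcal{H}$ or $\mathbf{G}\mathcal{H}$), the relation $pX+Xp=X$ and cyclicity give $\textup{Tr}(X) = 2\textup{Tr}(pX)$, while $\textup{Tr}(p)$ is constant on rank-$n$ projections so $\textup{Tr}(X)=0$; hence $\textup{Tr}(pX)=0$. Applied at $(\pi(q), \pi_* A)$ this yields $\textup{Tr}(\pi(q)\pi_* A) = \textup{Tr}(\pi_* A) = 0$. Second, by \cref{pistar}, $q\pi_* A = \pi_* A - A\pi(q)$, so
\[
\textup{Tr}(q\pi_* A) = \textup{Tr}(\pi_* A) - \textup{Tr}(A\pi(q)) = -\textup{Tr}(\pi(q)A).
\]
Combining the two, $\textup{Tr}((q-\pi(q))\pi_* A) = -\textup{Tr}(\pi(q)A)$, so $\theta_q(A) = \textup{Re}(-\textup{Tr}(\pi(q)A))$.

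Finally, $(q,A)\mapsto -\textup{Tr}(\pi(q)A)$ is complex linear in $A$ with respect to $I$ (because $IA=iA$ and the trace is $\mathbb{C}$-linear), so it is the unique complex-valued $(1,0)$-form whose real part equals $\theta$---this is precisely what is meant by the $I$-complexification. The only step that requires any care is the initial translation of the abstract covector encoded by $q$ into the concrete operator $q-\pi(q)$; once that identification is in hand, the rest is immediate from \cref{pistar} and the elementary identity $\textup{Tr}(pX)=0$ for tangent vectors at a projection.
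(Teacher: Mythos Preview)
Your proof is correct and follows essentially the same route as the paper: both take the trace of the relation in \cref{pistar} to obtain $\textup{Tr}(q\pi_*(A))=-\textup{Tr}(\pi(q)A)$ and identify the real part of the left side with the tautological $1$--form. Your version is more thorough in that it explicitly unpacks the covector represented by $q$ as $q-\pi(q)$ and justifies the vanishing of $\textup{Tr}(\pi(q)\pi_*A)$, whereas the paper leaves these points implicit.
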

\begin{proof}
Using \cref{pistar} and taking traces, we find that
\begin{equation}
    \textup{Tr}(q\pi_*(A))=-\textup{Tr}(A\pi(q))\;.
\end{equation}
The real part of the left side is the tautological 1–form and this proves the result.
\end{proof}
\begin{definition}
Associated to any skew-hermitian endomorhism $M$ of $\mathcal{H}$ is a vector field $q\mapsto[q,M]$ on $\mathbf{G}\mathcal{H}.$ By decomposing an endomorphism into its skew-hermitian parts as 
\begin{equation}
M=\frac{M-M^*}{2}+i\frac{M+M^*}{2i}\;,
\end{equation}
we get a map
\begin{equation}
    \mathcal{B}(\mathcal{H})\to \textup{T}_{\mathbb{C}}\mathbf{G}\mathcal{H}\;,M\mapsto \mathcal{X}_M\;,\;\;\mathcal{X}_M(q)=\Big[q,\frac{M-M^*}{2}\Big]+i\Big[q,\frac{M+M^*}{2i}\Big]\;.
\end{equation}
By pairing with the trace, these define fiberwise linear functions on $\textup{T}^*\mathbf{G}\mathcal{H},$ ie.
\begin{equation}
  \mathcal{B}(\mathcal{H})\xrightarrow[]{\widehat{}} C^{\omega}(\textup{T}^*\mathbf{G}\mathcal{H})\;,\;\;\hat{M}(q):= \textup{Tr}([\pi(q),q]M)\;.
\end{equation}
\end{definition}
Note that, $[\pi(q),q]=q-\pi(q)$ is the Euler–vector field.
\begin{proposition}\label{poiss}
$M\mapsto \mathcal{X}_M$ is a morphism of Lie algebras, ie.
\begin{equation}
   [\mathcal{X}_M,\mathcal{X}_N]=\mathcal{X}_{[M,N]}\;.
\end{equation}
Furthermore, with respect to the canonical Poisson structure on the cotangent bundle, $\,\widehat{}\,$ is a morphism of Lie algebras, ie.
 \begin{equation}
     \{\hat{M},\hat{N}\}=\widehat{[M,N]}\;.
 \end{equation}
\end{proposition}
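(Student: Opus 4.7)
The plan is to establish the two identities in sequence, with the second following from the first via the canonical structure of the cotangent Poisson bracket. For the vector field identity, the first step is to observe that the elaborate definition of $\mathcal{X}_M$ is designed precisely so that $\mathcal{X}_M(q) = [q, M]$ for every $M \in \mathcal{B}(\mathcal{H})$: the decomposition $M = (M-M^*)/2 + i(M+M^*)/(2i)$ splits $M$ into two skew-Hermitian pieces, and $[q, \cdot]$ is $\mathbb{C}$-linear. With this, $[\mathcal{X}_M, \mathcal{X}_N]$ reduces to a Lie bracket of (complex) vector fields on $\mathbf{G}\mathcal{H} \subset \mathcal{B}(\mathcal{H})$: expanding $\mathcal{X}_M(\mathcal{X}_N f)(q)$ as $D^2 f_q([q,M], [q,N]) + Df_q \cdot [[q,M], N]$ and subtracting the symmetric expression, the second-derivative terms cancel, leaving $Df_q \cdot ([[q,M], N] - [[q,N], M])$. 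The Jacobi identity in $\mathcal{B}(\mathcal{H})$ rewrites this as $Df_q \cdot [q, [M, N]] = \mathcal{X}_{[M, N]} f(q)$.

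For the Poisson bracket identity, the plan is to identify $\hat{M}$ (up to a sign) as the canonical fiberwise-linear function on $\textup{T}^*\mathbf{G}\mathcal{H}$ associated to the vector field $\mathcal{X}_M$ on the base, via the complexified tautological 1-form $\theta_q(A) = -\textup{Tr}(\pi(q) A)$ from the previous proposition. The natural lift of $\mathcal{X}_M(\pi(q)) = [\pi(q), M]$ to $\textup{T}_q \textup{T}^*\mathbf{G}\mathcal{H}$ is $[q, M]$, since \cref{push} (extended $\mathbb{C}$-linearly in $M$) gives $\pi_*([q, M]) = [\pi(q), M]$. A short computation using $\pi(q) q = q$ and $q\pi(q) = \pi(q)$ then yields
\begin{equation*}
\theta_q([q, M]) = -\textup{Tr}(\pi(q)[q, M]) = -\textup{Tr}(qM) + \textup{Tr}(\pi(q) M) = -\textup{Tr}([\pi(q), q]M) = -\hat{M}(q),
\end{equation*}
so $\hat{M}$ equals minus the standard fiberwise-linear function associated to $\mathcal{X}_M$.

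The conclusion then follows from the general identity $\{\tilde V, \tilde W\} = -\widetilde{[V, W]}$ for the canonical Poisson bracket of fiberwise-linear functions on a cotangent bundle, verifiable in Darboux coordinates in one line, combined with the first identity. Tracking the signs carefully, the two minus signs from $\hat{M} = -\tilde{\mathcal{X}}_M, \hat{N} = -\tilde{\mathcal{X}}_N$, the one from the general formula, and the one from $\tilde{\mathcal{X}}_{[M, N]} = -\widehat{[M, N]}$ combine to yield $\{\hat M, \hat N\} = \widehat{[M, N]}$. The main obstacle is not conceptual but just careful bookkeeping of these signs; once the identification $\hat{M} = -\tilde{\mathcal{X}}_M$ is made explicit and the $\mathbb{C}$-linear extension of \cref{push} to arbitrary $M$ is in hand, the rest is a direct invocation of the Jacobi identity together with a standard fact about cotangent bundles.
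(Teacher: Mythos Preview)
Your proposal is correct and follows essentially the same approach as the paper: a direct Jacobi-identity computation for $[\mathcal{X}_M,\mathcal{X}_N]=\mathcal{X}_{[M,N]}$, followed by identifying $\hat{M}$ with the fiberwise-linear function on $\textup{T}^*\mathbf{G}\mathcal{H}$ associated to $\mathcal{X}_M$ and invoking the standard fact that such functions Poisson-commute according to the Lie bracket. The paper compresses both steps to one sentence each; you spell out the identification via the complexified tautological $1$-form and track the signs, which the paper leaves implicit.
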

\begin{proof}
The first part is a direct computation. The second part follows from the first part and the fact that the canonical Poisson bracket of functions on the cotangent bundle defined by vector fields on the base is the Lie bracket.
\end{proof}
\begin{definition}
Using the Hilbert–Schmidt inner product on $\mathcal{B}(\mathcal{H}),$ we define a noncommutative product $\star$ on the image of $\;\widehat{}\;,$ given by
\begin{equation}
    \hat{M}\star\hat{N}:=\widehat{MN}\;,
\end{equation}
where $M,N\in\textup{Ker}(\,\;\widehat{}\,\;)^{\perp}.$
\end{definition}
\begin{proposition}
$\hat{M}\star\hat{N}-\hat{N}\star\hat{M}=\{\hat{M},\hat{N}\}.$
\end{proposition}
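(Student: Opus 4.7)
The proof should be essentially immediate from the previous proposition, so the plan is short. First I would observe that the map $\,\widehat{}\,$ is $\mathbb{R}$–linear (in fact $\mathbb{C}$–linear) in $M$, since $\widehat{M}(q) = \textup{Tr}([\pi(q),q]M)$ depends linearly on $M$ via the trace pairing. Consequently, for any $M, N \in \mathcal{B}(\mathcal{H})$ one has $\widehat{MN} - \widehat{NM} = \widehat{MN - NM} = \widehat{[M,N]}$.

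Next I would simply unwind the definition of $\star$. By definition $\hat{M}\star\hat{N} := \widehat{MN}$ and $\hat{N}\star\hat{M} := \widehat{NM}$, where $M, N$ are taken in $\textup{Ker}(\,\widehat{}\,)^{\perp}$ so that the operands on the right are unambiguously determined by $\hat{M}$ and $\hat{N}$. Combining with the linearity observation,
\begin{equation}
    \hat{M}\star\hat{N} - \hat{N}\star\hat{M} = \widehat{MN} - \widehat{NM} = \widehat{[M,N]}\;.
\end{equation}

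Finally I would invoke the second part of \cref{poiss}, which states precisely that $\{\hat{M},\hat{N}\} = \widehat{[M,N]}$. Comparing the two displays gives the desired identity. There is no real obstacle here; the only point requiring a brief comment is that even though $MN$ and $NM$ need not lie in $\textup{Ker}(\,\widehat{}\,)^{\perp}$, the map $\,\widehat{}\,$ is defined on all of $\mathcal{B}(\mathcal{H})$, so the expressions $\widehat{MN}$ and $\widehat{NM}$ are well defined and the linearity argument goes through verbatim.
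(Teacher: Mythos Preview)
Your proof is correct and is essentially the paper's argument: the paper simply writes ``This follows from \cref{poiss},'' and you have spelled out exactly that, using linearity of $\,\widehat{}\,$ together with the definition of $\star$ to reduce to the identity $\{\hat{M},\hat{N}\}=\widehat{[M,N]}$ from \cref{poiss}.
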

\begin{proof}
This follows from \cref{poiss}.
\end{proof}
\section{The Idempotent Section (Path Integral)}
In \cite{Lackman1}, we considered path integrals of the form
    \begin{equation}
\int_{\gamma(0)=x}^{\gamma(1)=y}\mathcal{P}(\gamma)\,\mathcal{D}\gamma\;,
\end{equation}
where the domain of integration consists of all paths $\gamma$ between two points $x,y$ in the zero section of a vector bundle $\mathcal{E}\to M,$ and whose integrand denotes parallel transport between the corresponding fibers.\ 
This is 
a higher–rank generalization of the coherent state path integral, which is a version of Feynman's path integral and determines the Hilbert space in quantum theory — this is rigorously equivalent to Berezin's formulation of quantization (\cite{Lackman2}, \cite{pol0}). We mathematically formulated such a path integral as a normalized, Hermitian idempotent section of
\begin{equation}
\mathcal{E}^*\boxtimes\mathcal{E}\to M\times M
\end{equation}
whose covariant derivative at the diagonal is zero.
In practice, such sections tend to be the integral kernel of the orthogonal projection onto the space of holomorphic sections, eg. the Bergman kernel (\cite{ma}). We'll consider such a section here. As a first application, we'll find that in a neighborhood of each point in $\mathbf{G}\mathcal{H},$ $\textup{T}^*\mathbf{G}\mathcal{H}$ is canonically locally trivial. 
\\\\In the following, we make use of the identification of $\pi^{-1}(q)$ with all $A\in\mathcal{B}(\mathcal{H})$ such that $qA=A, Aq=0,$ ie.\ the identification is given by $A\mapsto q+A.$ 
\begin{definition}
We define a section $\mathcal{S}$ of $(\textup{T}^*\mathbf{G}\mathcal{H})^*\boxtimes \textup{T}^*\mathbf{G}\mathcal{H}\to \mathbf{G}\mathcal{H}\times \mathbf{G}\mathcal{H}\;,$ given by
\begin{equation}
A\xmapsto{\mathcal{S}(q,p)} pA-pAp\;. 
\end{equation}
Here, $(\textup{T}^*\mathbf{G}\mathcal{H})^*\boxtimes \textup{T}^*\mathbf{G}\mathcal{H}:= \pi_1^*(\textup{T}^*\mathbf{G}\mathcal{H})^*\otimes \pi_2^*\textup{T}^*\mathbf{G}\mathcal{H},$ where $\pi_1,\pi_2:\mathbf{G}\mathcal{H}\times \mathbf{G}\mathcal{H}\to  \mathbf{G}\mathcal{H}$ are the projections onto the first and second factor, respectively.
\end{definition}
To be clear, we are identifying $\mathcal{S}(q,p)$ with a morphism of vector spaces $\textup{T}^*_q\mathbf{G}\mathcal{H}\to \textup{T}^*_p\mathbf{G}\mathcal{H}.$
\begin{proposition}
For all $q\in\mathbf{G}\mathcal{H},$ $\mathcal{S}(q,q)$ is the identity and $\mathcal{S}$ is Hermitian with respect to the natural Hermitian metric on the fibers of $\pi,$ ie. for $A\in\textup{T}^*_q\mathbf{G}\mathcal{H},\,B\in\textup{T}^*_p\mathbf{G}\mathcal{H},$
\begin{equation}
    \textup{Tr}((A\mathcal{S}(q,p))^*B)=\textup{Tr}(A^*(B\mathcal{S}(p,q)))
\end{equation}
\end{proposition}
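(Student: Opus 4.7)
The first assertion is essentially a one-line check. For $A$ in the fiber over $q,$ we have $qA=A$ and $Aq=0$ by the identification recalled just before the definition of $\mathcal{S}.$ Since $q$ is a Hermitian projection, so $q^2=q,$ this gives $\mathcal{S}(q,q)(A)=qA-qAq=A-(qA)q=A-Aq=A,$ as needed.

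For the Hermitian property, the strategy is a direct trace computation showing that \emph{both sides equal} $\textup{Tr}(A^*B).$ The inputs I will use are: (i) the fiber conditions $qA=A,\,Aq=0$ and $pB=B,\,Bp=0,$ together with their adjoints, which use that $p,q$ are Hermitian to give $A^*q=A^*,\,qA^*=0,$ $B^*p=B^*,\,pB^*=0;$ (ii) cyclicity of the trace. Concretely, on the left I expand $(\mathcal{S}(q,p)(A))^{*}=(pA-pAp)^{*}=A^{*}p-pA^{*}p.$ The term $\textup{Tr}(A^{*}pB)$ collapses to $\textup{Tr}(A^{*}B)$ via $pB=B,$ while $\textup{Tr}(pA^{*}pB)=\textup{Tr}(A^{*}pBp)=0$ by cyclicity and $Bp=0.$ On the right I expand $\mathcal{S}(p,q)(B)=qB-qBq.$ The term $\textup{Tr}(A^{*}qB)$ collapses to $\textup{Tr}(A^{*}B)$ via $A^{*}q=A^{*},$ while $\textup{Tr}(A^{*}qBq)=\textup{Tr}(qA^{*}qB)=0$ by cyclicity and $qA^{*}=0.$ Thus both sides equal $\textup{Tr}(A^{*}B).$

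\textbf{Main obstacle.} There is no serious obstacle; the whole argument is a short algebraic manipulation. The only care required is bookkeeping — using the Hermiticity $p^{*}=p,\,q^{*}=q$ correctly when taking adjoints of the fiber relations, and keeping each projection on the correct side of each operator when applying $qA=A$ or $Bp=0.$
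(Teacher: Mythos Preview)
Your proof is correct and follows essentially the same approach as the paper: both sides are shown to equal $\textup{Tr}(A^*B)$ using the fiber relations $qA=A,\,Aq=0,\,pB=B,\,Bp=0$ and their adjoints together with cyclicity of the trace. The paper simply records the chain $\textup{Tr}((pA-pAp)^*B)=\textup{Tr}(A^*B)=\textup{Tr}(A^*(qB-qBq))$ without spelling out the intermediate cancellations you supply.
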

\begin{proof}
The first part follows from the fact that for $A\in\textup{T}_q^*\mathbf{G}\mathcal{H},$ $qA=A,\, qAq=0.$ The second part follows from
\begin{equation}
    \textup{Tr}((pA-pAp)^*B)=\textup{Tr}(A^*B)=\textup{Tr}(A^*(qB-qBq))\;.
\end{equation}
\end{proof}
\begin{proposition}
For each $q\in \mathbf{G}\mathcal{H},$ the set of $p\in \mathbf{G}\mathcal{H}$ for which
\begin{equation}\label{secti}
\textup{T}^*_q\mathbf{G}\mathcal{H}\to    \textup{T}^*_p\mathbf{G}\mathcal{H}\;,\;\;A\mapsto A\mathcal{S}(q,p)
\end{equation}
is an isomorphism is an open set containing $q.$ Furthermore, for each fixed $q$ and $A$ the section of $\textup{T}^*\mathbf{G}\mathcal{H}\to\mathbf{G}\mathcal{H}$ defined by \cref{secti} is $J$–holomorphic.
\end{proposition}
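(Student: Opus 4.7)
\textit{Plan.}

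For the first assertion, I would first verify that $\mathcal{S}(q,q)$ is the identity: $\mathcal{S}(q,q)(A)=qA-qAq$, and since $A\in\textup{T}^*_q\mathbf{G}\mathcal{H}$ satisfies $qA=A$, $Aq=0$, we have $qAq=(qA)q=Aq=0$, so $\mathcal{S}(q,q)(A)=A$. The formula $\mathcal{S}(q,p)(A)=pA-pAp$ shows that $p\mapsto\mathcal{S}(q,p)$ is real-analytic in $p$. Working in any local trivialization of $\textup{T}^*\mathbf{G}\mathcal{H}\to\mathbf{G}\mathcal{H}$ over a neighborhood of $q$, $\mathcal{S}(q,\cdot)$ becomes a smoothly varying family of linear endomorphisms of the model fiber $\textup{T}^*_q\mathbf{G}\mathcal{H}$, taking the value $\textup{id}$ at $p=q$. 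Openness of invertibility in the space of endomorphisms then yields an open neighborhood of $q$ on which $\mathcal{S}(q,p)$ is an isomorphism.

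For the second assertion, under the identification $\pi^{-1}(p)\ni B\mapsto p+B$ the section corresponds to $s(p):=p+pA-pAp\in\textup{T}^*\mathbf{G}\mathcal{H}$. I would verify $J$-holomorphicity directly by checking $ds_p\circ J_p=J_{s(p)}\circ ds_p$ on $\textup{T}_p\mathbf{G}\mathcal{H}$. Differentiating gives $ds_p(X)=X+XA-XAp-pAX$. Substituting $J_pX=i[X,p]$ and $J_{s(p)}Y=i[Y,s(p)]$, both sides become explicit commutator polynomials in $X,A,p$, and their equality follows by algebraic expansion, using the identities of \cref{identities} — notably $pXp=0$ for $X\in\textup{T}_p\mathbf{G}\mathcal{H}$, the vanishing $[p,YZ]=0$ for $Y,Z\in\textup{T}_p\textup{T}^*\mathbf{G}\mathcal{H}$, and $p(pA-pAp)p=0$ since $pA-pAp$ is a cotangent vector at $p$.

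A more conceptual alternative uses the $J$-holomorphic embedding $\textup{T}^*\mathbf{G}\mathcal{H}\hookrightarrow\mathbf{G}\mathcal{H}\times\mathbf{G}\mathcal{H}$, $q\mapsto(\pi(q),\pi(q^*))$ from the preceding subsection, which intertwines $J$ with the product complex structure on the codomain. Under this embedding the section becomes $p\mapsto(\pi(s(p)),\pi(s(p)^*))$, and the first component simplifies to $p\mapsto p$: writing $s(p)=p+B$ with $pB=B$, $Bp=0$, one checks $s(p)$ fixes $\textup{im}(p)$ pointwise and carries $\textup{im}(p)^\perp$ into $\textup{im}(p)$. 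It then remains to show that $p\mapsto\pi(s(p)^*)$ — the orthogonal projection onto the graph of $(1-p)A^*|_{\textup{im}(p)}\colon\textup{im}(p)\to\textup{im}(p)^\perp$ — is $J$-holomorphic as a map $\mathbf{G}\mathcal{H}\to\mathbf{G}\mathcal{H}$. I expect the main obstacle in either approach to be the algebraic or geometric bookkeeping needed to track how the commutator structure (respectively the graph) varies holomorphically with $p$; the direct approach keeps everything in $C^*$-algebraic terms consistent with the paper's style, while the embedding approach makes the geometric content more transparent.
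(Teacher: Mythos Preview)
Your proposal is correct and follows essentially the same approach as the paper: for the first part the paper invokes $\mathcal{S}(q,q)=\mathrm{id}$ together with lower semi-continuity of the rank of a bundle morphism (equivalent to your openness-of-invertibility argument), and for the second part the paper simply says ``direct computation,'' which is exactly the verification $ds_p\circ J_p=J_{s(p)}\circ ds_p$ you outline. Your alternative route through the $J$-holomorphic embedding into $\mathbf{G}\mathcal{H}\times\mathbf{G}\mathcal{H}$ is a nice conceptual variant the paper does not use, but your primary plan matches the paper's.
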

\begin{proof}
The first part follows from the fact that $\mathcal{S}$ is the identity on the diagonal, hence is an isomorphism there, and the fact that the rank of a vector bundle morphism is lower semi–continuous. That \cref{secti} defines a $J$–holomorphic section is a direct computation.
\end{proof}
\begin{proposition}\label{sect}
In the case of $\mathbb{P}\mathcal{H},$ \cref{secti} is an isomorphism for all $p$ such that $pq\ne 0$ and is zero otherwise.
\end{proposition}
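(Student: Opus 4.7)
My plan is to work directly in the rank–$1$ parametrization, reducing the claim to a one–line computation. The key input is the identification of $\pi^{-1}(q)$ recorded just before the definition of $\mathcal{S}$: $\textup{T}^*_q\mathbb{P}\mathcal{H}$ consists of those $A\in\mathcal{B}(\mathcal{H})$ with $qA=A$ and $Aq=0$. For $q=vv^*$ and $p=ww^*$ with unit vectors $v,w\in\mathcal{H}$, this forces $A=v\beta^*$ for some $\beta\in v^\perp$, and analogously $\textup{T}^*_p\mathbb{P}\mathcal{H}=\{w\tilde\beta^*:\tilde\beta\in w^\perp\}$.

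Next I would carry out the computation of $\mathcal{S}(q,p)A=pA-pAp$ in this parametrization. Substituting $A=v\beta^*$ and $p=ww^*$,
\begin{equation*}
pA-pAp \;=\; \langle w,v\rangle\,w\beta^* \;-\; \langle w,v\rangle\langle\beta,w\rangle\,ww^* \;=\; \langle w,v\rangle\,w\tilde\beta^*,
\end{equation*}
where $\tilde\beta:=\beta-\langle w,\beta\rangle w$ is the orthogonal projection of $\beta$ onto $w^\perp$. Identifying $v\beta^*\leftrightarrow\beta$ and $w\tilde\beta^*\leftrightarrow\tilde\beta$, the map $\mathcal{S}(q,p)$ is therefore the composition of scalar multiplication by $\langle w,v\rangle$ with the orthogonal projection $v^\perp\to w^\perp$.

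The two claims then read off at once. If $pq=0$, equivalently $\langle w,v\rangle=0$, the scalar factor vanishes and $\mathcal{S}(q,p)=0$. Otherwise $\langle w,v\rangle\ne 0$, and it remains to check that the orthogonal projection $v^\perp\to w^\perp$ is an isomorphism: its kernel is $v^\perp\cap\mathbb{C}w$, which is trivial precisely when $w\notin v^\perp$, i.e.\ when $\langle w,v\rangle\ne 0$; in infinite dimensions an explicit inverse $\tilde\beta\mapsto\tilde\beta-\tfrac{\langle v,\tilde\beta\rangle}{\langle v,w\rangle}w$ handles surjectivity.

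The main obstacle is conceptual rather than technical: one must use the correct identification of the \emph{cotangent fiber} $\pi^{-1}(q)=\{A:qA=A,\,Aq=0\}$ (operators of the form $v\beta^*$) and not confuse it with the tangent space $\textup{T}_q\textup{T}^*\mathbb{P}\mathcal{H}$ cut out by $qA+Aq=A$. Once this is kept straight, the entire proposition collapses to the single algebraic identity displayed above.
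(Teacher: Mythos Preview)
Your proof is correct and takes a genuinely different route from the paper's. The paper stays at the operator level, in keeping with its coordinate--free philosophy: it uses the rank--$1$ identity $pAp=\textup{Tr}(pA)\,p$, assumes $\mathcal{S}(q,p)A=0$, multiplies by $q$ on the right to force $\textup{Tr}(pA)=0$ (since $Aq=0$ and $pq\ne 0$), and then argues that $pA=0$ with $qA=A$ forces $A=0$. It only proves injectivity, invoking equal finite dimensions for the isomorphism; the ``zero when $pq=0$'' case is left implicit (it follows at once from $pA=p(qA)=(pq)A=0$). By contrast, you parametrize $q=vv^*$, $p=ww^*$, $A=v\beta^*$ and identify $\mathcal{S}(q,p)$ explicitly as $\langle w,v\rangle$ times the orthogonal projection $v^\perp\to w^\perp$. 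This is less intrinsic but buys you more: you get a transparent geometric description of the map, the ``zero'' case drops out from the scalar factor, and your explicit inverse $\tilde\beta\mapsto\tilde\beta-\tfrac{\langle v,\tilde\beta\rangle}{\langle v,w\rangle}w$ establishes surjectivity directly, so your argument goes through for infinite--dimensional $\mathcal{H}$ where the paper's ``injective $\Rightarrow$ isomorphism'' step would need justification.
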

\begin{proof}
We just need to prove that it is injective there. In this case, $pAp=\textup{Tr}(pA).$ Suppose that 
\begin{equation}\label{one}
    pA-\textup{Tr}(pA)p=0\;.
\end{equation}
Since $Aq=0, pq\ne 0,$ it follows that 
\begin{equation}
\textup{Tr}(pA)=0\;,
\end{equation}
and therefore \cref{one} implies that $pA=0.$ Since the image of $q$ is 1–dimensional and $qA=A,$ $A$ is either zero or it surjects onto the image of $q.$ Therefore, since $pq\ne 0,$ $pA=0$ implies that $A=0.$
\end{proof}
In addition to giving local trivializations, such sections are useful for defining connections. 
\begin{definition}
Differentiating $\mathcal{S}$ in the second component at the diagonal determines a splitting of
\begin{equation}
\textup{T}\textup{T}^*\mathbf{G}\mathcal{H}\to \pi^*\textup{T}\mathbf{G}\mathcal{H}
\end{equation}   
which we denote by $\nabla_{\mathcal{S}}.$
\end{definition}
The following shows that the covariant derivative of $\mathcal{S}$ at the diagonal is zero: 
\begin{proposition}
The morphism $\textup{T}_{q}\mathbf{G}\mathcal{H}\to \textup{T}_{q+A}\textup{T}^*\mathbf{G}\mathcal{H}$
is given by 
\begin{equation}
    \nabla_{\mathcal{S}}B=B+[B,A]\;.
\end{equation}
This splitting agrees with \cref{connection}
\end{proposition}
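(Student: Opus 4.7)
My plan is to compute $\nabla_{\mathcal{S}} B$ directly by differentiating the local section of $\pi$ determined by $\mathcal{S}$, and then to verify that the resulting formula coincides with the horizontal lift coming from \cref{connection}.

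First, I fix $q \in \mathbf{G}\mathcal{H}$ and $A \in \textup{T}^*_q\mathbf{G}\mathcal{H}$, so that $qA = A$, $Aq = 0$, and hence $A^2 = (Aq)A = 0$. The local section of $\pi:\textup{T}^*\mathbf{G}\mathcal{H} \to \mathbf{G}\mathcal{H}$ through $q+A$ arising from $\mathcal{S}$ is $p \mapsto p + A\mathcal{S}(q,p) = p + pA - pAp$, which evaluates to $q + A$ at $p = q$. I will then differentiate this at $p = q$ in the direction $B \in \textup{T}_q\mathbf{G}\mathcal{H}$. The derivative of $p$ contributes $B$, the derivative of $pA$ contributes $BA$, and the derivative of $pAp$ contributes $BAq + qAB$. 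Using $Aq = 0$ to kill $BAq$ and $qA = A$ to rewrite $qAB$ as $AB$, the sum collapses to $B + BA - AB = B + [B,A]$, which is the formula asserted.

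Next I verify agreement with the connection of \cref{connection}, which gives the horizontal lift $H(B) = [q+A, [\pi(q+A), B]] = [q+A, [q, B]]$. Splitting this as $[q,[q,B]] + [A,[q,B]]$, the first term equals $B$ by identity 2 of \cref{identities}, since $qB + Bq = B$. To handle the second term, I expand
\begin{equation*}
[A,[q,B]] = AqB - ABq - qBA + BqA.
\end{equation*}
The relations $qA = A$ and $Aq = 0$ give $AqB = 0$ and $BqA = BA$. Multiplying $qB + Bq = B$ on the right by $A$ and using $qA = A$ yields $qBA = 0$; multiplying it on the left by $A$ and using $Aq = 0$ yields $ABq = AB$. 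Substituting, $[A,[q,B]] = BA - AB$, whence $H(B) = B + [B,A]$, matching the formula from the differentiation step.

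I do not foresee any serious obstacle: the whole argument is an unwinding of definitions using the algebraic identities of \cref{identities}. The one small auxiliary check worth confirming along the way is that $B + [B,A]$ really lies in $\textup{T}_{q+A}\textup{T}^*\mathbf{G}\mathcal{H}$, i.e.\ that $X := B + [B,A]$ satisfies $(q+A)X + X(q+A) = X$; this follows in a line from the same relations $qA = A$, $Aq = 0$, $A^2 = 0$, $qB + Bq = B$, together with the derived identities $qBA = 0$, $ABq = AB$.
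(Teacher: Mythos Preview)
Your proof is correct and is precisely the ``straightforward computation'' the paper alludes to: you differentiate the section $p\mapsto p+pA-pAp$ at $p=q$ to obtain $B+[B,A]$, and then expand $[q+A,[q,B]]$ using the identities of \cref{identities} to see it matches. There is nothing to add.
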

\begin{proof}
This is a straightforward computation.
\end{proof}
\begin{proposition}
$\mathcal{S}$ is an idempotent section, ie.\ with respect to the volume form $dq$ of the Fubini–Study metric (after normalizing by a constant), for all $q,p\in \mathbf{G}\mathcal{H}$
\begin{equation}\label{con}
S(q,p)=\int_{\mathbf{G}\mathcal{H}}S(q,q')S(q',p)\,dq'\;.
\end{equation}
\end{proposition}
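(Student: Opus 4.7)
The plan is to verify the idempotency by directly expanding the composition $\mathcal{S}(q',p)\circ\mathcal{S}(q,q')$ and reducing to two standard Grassmannian integrals whose form is forced by $U(\mathcal{H})$-invariance. For $A\in\textup{T}_q^*\mathbf{G}\mathcal{H}$ we have $\mathcal{S}(q,q')(A) = q'A - q'Aq'$; applying $\mathcal{S}(q',p)$ yields
\begin{equation*}
\bigl(\mathcal{S}(q',p)\circ \mathcal{S}(q,q')\bigr)(A) = pq'A - pq'Aq' - pq'Ap + pq'Aq'p.
\end{equation*}
All $q'$-dependence is thereby captured by $\int_{\mathbf{G}\mathcal{H}} q'\,dq'$ and by the operator $T(X):=\int_{\mathbf{G}\mathcal{H}} q' X q'\,dq'$ on $\mathcal{B}(\mathcal{H})$.

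The next step is to exploit the $U(\mathcal{H})$-invariance of the Fubini--Study volume. Schur's lemma applied to the trivial representation gives $\int q'\,dq' = \tfrac{n}{N}\,I$, where $N=\dim\mathcal{H}$. The map $T$ is $U(\mathcal{H})$-equivariant for the conjugation action on $\mathcal{B}(\mathcal{H})$, and since $\mathcal{B}(\mathcal{H})$ splits under this action into the two non-isomorphic irreducibles $\mathbb{C}\cdot I$ and $\mathfrak{sl}(\mathcal{H})$, Schur forces
\begin{equation*}
T(X) = \alpha X + \beta\,\textup{Tr}(X)\,I
\end{equation*}
for constants $\alpha,\beta$ depending only on $n$ and $N$.

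The crucial simplification is $\textup{Tr}(A) = 0$: from $Aq=0$ and cyclicity of the trace, $\textup{Tr}(A) = \textup{Tr}(qA) = \textup{Tr}(Aq) = 0$. This kills the $\beta\,\textup{Tr}(A)\,I$ contribution in $T(A)$, and substituting the two integrals back into the expanded integrand yields
\begin{equation*}
\int_{\mathbf{G}\mathcal{H}}\bigl(\mathcal{S}(q',p)\circ\mathcal{S}(q,q')\bigr)(A)\,dq' \;=\; \bigl(\tfrac{n}{N}-\alpha\bigr)(pA - pAp) \;=\; \bigl(\tfrac{n}{N}-\alpha\bigr)\,\mathcal{S}(q,p)(A).
\end{equation*}
Rescaling $dq$ by the reciprocal of $\tfrac{n}{N}-\alpha$ then produces the claimed identity.

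The main obstacle is verifying that $\tfrac{n}{N}-\alpha$ is nonzero, as otherwise no normalization of the measure rescues the formula. For projective space ($n=1$) this is settled by a direct Gaussian integral on the unit sphere in $\mathcal{H}$, which gives $\alpha = \tfrac{1}{N(N+1)}$ and hence $\tfrac{1}{N}-\alpha = \tfrac{1}{N+1}$, so the normalizing constant is $N+1$. For general Grassmannians, one applies the same Schur-type principle to the $U(\mathcal{H})$-equivariant tensor $\int q'\otimes q'\,dq' \in \mathcal{B}(\mathcal{H}\otimes\mathcal{H})$, which by invariance is a linear combination of the identity and the swap; the two coefficients are pinned down by pairing with $I\otimes I$ and with the swap, determining $\alpha$ explicitly and confirming the positivity of $\tfrac{n}{N}-\alpha$.
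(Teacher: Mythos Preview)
Your argument is correct and follows the same Schur's-lemma reduction as the paper: expand the composed kernel, separate the $q'$-dependence into $\int q'\,dq'$ and $M\mapsto\int q'Mq'\,dq'$, and use irreducibility of $\mathfrak{sl}(\mathcal{H})$ under conjugation to see that the combination acts as a scalar on traceless operators. The one place you and the paper genuinely diverge is in showing the resulting constant $\tfrac{n}{N}-\alpha$ is nonzero. The paper does not compute it: it observes that $\mathcal{S}$ is the integral kernel of a trace-class operator on $L^2$ sections of $\textup{T}^*\mathbf{G}\mathcal{H}\to\mathbf{G}\mathcal{H}$, continuous and equal to the identity on the diagonal, so its trace is positive and the operator cannot be nilpotent; hence $\int S(q,q')S(q',p)\,dq'$ cannot vanish identically. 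Your route is more constructive --- you evaluate $\alpha$ from second moments on the sphere for $n=1$, and for general $n$ from the identity-plus-swap decomposition of $\int q'\otimes q'\,dq'$ (which, carried through, gives $\tfrac{n}{N}-\alpha=\tfrac{n(N-n)}{N^2-1}>0$). Your method delivers the actual normalizing constant; the paper's method is shorter and needs no moment identities, at the price of invoking a trace formula for continuous kernels.
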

\begin{remark}
This proof is a variant of the standard proof that projective space is supercomplete (or overcomplete), using Schur's lemma, ie.\ a linear operator commuting with an irreducible representation is a multiple of the identity. See \cite{klauder4}.
\end{remark}
\begin{proof}
We have that
   \begin{equation}
       AS(q,q')S(q',p)=p\big(q'A-q'Aq'\big)-p\big(q'A-q'Aq'\big)p\;,
       \end{equation}
so the result follows if there exists a constant $\lambda \ne 0$ such that for all traceless $M,$
\begin{equation}\label{tra}
  \lambda M=\int_{\mathbf{G}\mathcal{H}}(qM-qMq) \,dq\;.
\end{equation}
Note that, $\mathcal{S}$ is an integral kernel and defines a trace–class operator on square–integrable sections of $\textup{T}^*\mathbf{G}\mathcal{H}\to \mathbf{G}\mathcal{H}.$ It is a continuous section that is the identity on the diagonal, and therefore the trace of this operator is the volume of $\mathbf{G}\mathcal{H}$ with respect to $dq$ (\cite{bris}, theorem 3.1). In particular, the trace is positive and therefore this operator can't be nilpotent.\ Therefore, the right of \cref{con} can't be zero for all $q,p.$ \Cref{tra} now follows from Schur's lemma: let $U\in\mathcal{B}(\mathcal{H})$ be unitary. Then by invarance of $dq$ under the unitary group,
\begin{equation}
U\int_{\mathbf{G}\mathcal{H}}q \,dq=U\int_{\mathbf{G}\mathcal{H}}U^*qU \,dq=\int_{\mathbf{G}\mathcal{H}}q\,dq\,U\;.
\end{equation}
Therefore, 
\begin{equation}
    \int_{\mathbf{G}\mathcal{H}}q \,dq
    \end{equation}
commutes with an irreducible representation and is thus a multiple of the identity operator. This shows that 
\begin{equation}
    \int_{\mathbf{G}\mathcal{H}}qM \,dq
    \end{equation}
is a multiple of $M.$ Similarly, the conjugation representation of the unitary group on traceless endomorphisms is irreducible, and 
\begin{equation}
    U\int_{\mathbf{G}\mathcal{H}}qMq \,dq\,U^*=U\int_{\mathbf{G}\mathcal{H}}U^*qUMU^*qU \,dq\,U^*=\int_{\mathbf{G}\mathcal{H}}qUMU^*q \,dq\;.
\end{equation}
Therefore, 
\begin{equation}
    M\mapsto \int_{\mathbf{G}\mathcal{H}}qMq \,dq
    \end{equation}
commutes with an irreducible representation and must also be a multiple of the identity operator. This completes the proof.
    \end{proof}
\begin{definition}
We have a linear map 
\begin{equation}
    \Psi:\mathcal{B}(\mathcal{H})\to \Gamma(\textup{T}^*\mathbf{G}\mathcal{H}\to\mathbf{G}\mathcal{H})\;,\;\;\Psi_M(q)=qM-qMq\;.
\end{equation}
\end{definition}
\begin{corollary}
The restriction of $\Psi$ to trace zero endomorphisms is injective.\ Furthermore, the sections in its image are $J$–holomorphic.
\end{corollary}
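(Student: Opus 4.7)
The plan is to handle the two claims separately, since injectivity is the only one requiring real work, while $J$-holomorphicity reduces to a computation already carried out.

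For injectivity on trace-zero endomorphisms, I would first reinterpret $\Psi_M = 0$ algebraically. The condition $qM - qMq = 0$ for every rank-$n$ Hermitian projection $q$ is equivalent to $qM(1-q) = 0$, which says exactly that $M$ preserves $\textup{Im}(q)^{\perp}$. Setting $k = \dim_{\mathbb{C}}\mathcal{H} - n$ and assuming $1 \le n \le \dim\mathcal{H} - 1$ (otherwise $\mathbf{G}_n\mathcal{H}$ is a point and the claim is vacuous), every $k$-dimensional subspace of $\mathcal{H}$ arises as $\textup{Im}(q)^{\perp}$ for some such Hermitian projection, so $M$ preserves every $k$-dimensional subspace.

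Next I would upgrade invariance of every $k$-plane to scalarity. For any nonzero $v \in \mathcal{H}$, the intersection of all $k$-dimensional subspaces containing $v$ equals $\textup{span}(v)$, because whenever $w \notin \textup{span}(v)$ one can exhibit a $k$-dimensional subspace through $v$ that avoids $w$ (possible since $1 \le k \le \dim\mathcal{H} - 1$). Hence $Mv \in \textup{span}(v)$ for every $v$, and the standard comparison on $v + w$ forces $M$ to be a scalar multiple of the identity. The trace-zero hypothesis then gives $M = 0$.

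Finally, the sections $\Psi_M(p) = pM - pMp$ have exactly the algebraic form of the sections $p \mapsto A\,\mathcal{S}(q,p) = pA - pAp$ considered in \cref{sect}, whose $J$-holomorphicity is asserted as a direct computation. Since that computation involves only the expression $pX - pXp$ and not the tangent-space constraints $qX = X,\ Xq = 0$, it applies verbatim with $X = M$ arbitrary in $\mathcal{B}(\mathcal{H})$, giving the second claim. The hard part will be the subspace-intersection argument in the scalarity step — elementary but requiring a brief combinatorial check in the extreme cases $k = 1$ and $k = \dim\mathcal{H} - 1$.
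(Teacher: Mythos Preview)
Your argument is correct, but it differs from the paper's. For injectivity, the paper simply exhibits a left inverse: the preceding proposition establishes (via Schur's lemma applied to the unitary action on $\mathbf{G}\mathcal{H}$) that there is a nonzero constant $\lambda$ with $\lambda M = \int_{\mathbf{G}\mathcal{H}}(qM - qMq)\,dq$ for every traceless $M$, so $\psi \mapsto \lambda^{-1}\int_{\mathbf{G}\mathcal{H}}\psi(q)\,dq$ inverts $\Psi$ on the left. Your route --- showing that $\Psi_M = 0$ forces $M$ to preserve every $(\dim\mathcal{H}-n)$-dimensional subspace, hence every line, hence to be scalar --- is more elementary and entirely self-contained: it needs neither the Haar measure on $\mathbf{G}\mathcal{H}$ nor Schur's lemma. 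The paper's approach, by contrast, ties injectivity to the overcompleteness/idempotent-kernel structure that is the thematic thread of that section, and gives the stronger statement that an explicit left inverse exists. For the $J$-holomorphicity clause your reduction is right (the computation for $p\mapsto pX - pXp$ uses nothing about $X$ beyond membership in $\mathcal{B}(\mathcal{H})$); note only that the relevant reference is the proposition \emph{preceding} \cref{sect}, not \cref{sect} itself.
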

\begin{proof}
It's left inverse is given by
  \begin{equation}
\int:\Gamma(\textup{T}^*\mathbf{G}\mathcal{H})   \to \mathcal{B}(\mathcal{H})\;,\psi\mapsto\int_{\mathbf{G}\mathcal{H}}\psi(q)\,dq\;.
    \end{equation}   
\end{proof}
\begin{remark}
Under the metric–induced identification 
\begin{equation}
\textup{T}\mathbf{G}\mathcal{H}\to \textup{T}^*\mathbf{G}\mathcal{H}\;,\;\;A\mapsto qA\;,
\end{equation}
the vector field $q\mapsto [q,M]$ is identified with $q\mapsto q[q,M]=qM-qMq.$ It is a classical result that holomorphic vector fields on Grassmnians are identified with traceless matrices (\cite{koba}).
\end{remark}
In the following, $\mathfrak{sl}(\mathcal{H})$ denotes trace zero endomorphisms of $\mathcal{H}.$
\begin{proposition}
The map 
\begin{equation}
\mathbf{G}_k\mathcal{H}\xhookrightarrow{\Psi^{\dagger}}\mathbf{G}_{k(n-k)}\,\mathfrak{sl}(\mathcal{H})\;,\;\;q\mapsto \Psi^{\dagger}_q\,,\;\Psi^{\dagger}_q(M)=qM-qMq
\end{equation}
is well–defined and the pullback of the tautological bundle is isomorphic to $\textup{T}^*\mathbf{G}_k\mathcal{H}\to \mathbf{G}_k\mathcal{H}.$ Furthermore, the idempotent section $\mathcal{S}$ is the pullback of the idempotent section of the tautological bundle.\footnote{This is defined in \cite{Lackman1}.} For $k=1,n=2,$ this embedding is Lagrangian.
\end{proposition}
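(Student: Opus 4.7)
The plan is to verify the four assertions in turn — well-definedness of $\Psi^\dagger$, the bundle isomorphism, identification of $\mathcal{S}$ with the pullback of the tautological idempotent section, and the Lagrangian property at $(k,n)=(1,2)$ — each via direct algebraic manipulation using only $q^2=q$ together with identifications already set up in the paper.

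For well-definedness, $\textup{Tr}(qM-qMq)=\textup{Tr}(qM)-\textup{Tr}(q^2M)=0$ by cyclicity, so $\Psi^\dagger_q$ maps $\mathfrak{sl}(\mathcal{H})$ to itself; idempotence follows because $qM-qMq$ satisfies $q\cdot(-)=(-)$ and $(-)\cdot q=0$, so $\Psi^\dagger_q$ fixes it. The image of $\Psi^\dagger_q$ is exactly $\{A\in\mathcal{B}(\mathcal{H}):qA=A,\,Aq=0\}$ (take $M=A$ for the containment $\supseteq$; multiply $qM-qMq$ by $q$ on the left and right for $\subseteq$), which is the subspace used in the discussion preceding \cref{sect} to identify $\pi^{-1}(q)$ with $\textup{T}^*_q\mathbf{G}\mathcal{H}$, of complex dimension $k(n-k)$ by \cref{perpstand}. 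This simultaneously places $\Psi^\dagger_q$ in $\mathbf{G}_{k(n-k)}\mathfrak{sl}(\mathcal{H})$ and yields the bundle isomorphism, since the pulled-back tautological bundle has fiber $\textup{Im}(\Psi^\dagger_q)=\textup{T}^*_q\mathbf{G}\mathcal{H}$ at $q$, varying smoothly in $q$. The tautological idempotent section over $\mathbf{G}_m\mathcal{K}\times\mathbf{G}_m\mathcal{K}$ defined in \cite{Lackman1} sends a pair $(P_1,P_2)$ of projections to $P_2|_{\textup{Im}(P_1)}$; pulled back via $\Psi^\dagger\times\Psi^\dagger$ at $(q,p)$, this is $A\mapsto\Psi^\dagger_p(A)=pA-pAp$, which is exactly $\mathcal{S}(q,p)$.

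For the Lagrangian claim with $(k,n)=(1,2)$, the key observation is that for any $A,B\in\textup{Im}(\Psi^\dagger_q)$ one has $AB=(qA)(qB)=q(Aq)B=0$ using $Aq=0$, so in particular $\textup{Tr}(AB)=0$. Hence $\textup{Im}(\Psi^\dagger_q)$ is always an isotropic subspace of $\mathfrak{sl}(\mathcal{H})$ with respect to the complex symmetric non-degenerate bilinear form $(X,Y)\mapsto\textup{Tr}(XY)$. For $k=1,\,n=2$ the image has complex dimension $1$, which equals the Witt index of this form on the three-dimensional space $\mathfrak{sl}_2$, so the image is a maximal isotropic (Lagrangian) subspace and $\Psi^\dagger$ factors through the Lagrangian Grassmannian of $\mathfrak{sl}_2$.

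The main obstacle is simply identifying the bilinear structure on $\mathfrak{sl}(\mathcal{H})$ to which ``Lagrangian'' refers — the natural candidate is $\textup{Tr}(XY)$, which produces a non-trivial dimensional coincidence ($k(n-k)=\lfloor(n^2-1)/2\rfloor$) only in the $(k,n)=(1,2)$ case. Everything else is a direct bookkeeping of the identifications already built up in the paper.
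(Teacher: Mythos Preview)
For the first three assertions your approach matches the paper's: both identify $\textup{Im}(\Psi^\dagger_q)$ with $\{A:qA=A,\,Aq=0\}=\textup{T}_q^*\mathbf{G}_k\mathcal{H}$ via the same two containments, which handles well-definedness and the bundle isomorphism simultaneously, and the idempotent-section identification is then immediate from the definitions.

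Where you diverge from the paper is the meaning of ``Lagrangian.'' The paper's proof makes its reading explicit: ``the embedded submanifold is isotropic, as the trace of the curvature of $\textup{T}^*\mathbf{G}_k\mathcal{H}\to\mathbf{G}_k\mathcal{H}$ is zero,'' and then half-dimensionality is invoked. So the paper means that $\Psi^\dagger(\mathbf{G}_1\mathcal{H})\subset\mathbf{G}_1\mathfrak{sl}(\mathcal{H})\cong\mathbb{P}^2$ is a \emph{Lagrangian submanifold} for the K\"ahler form on the target Grassmannian, and the argument goes through the (asserted) vanishing of the trace of the curvature of the pulled-back tautological bundle. Your argument instead shows that each subspace $\textup{Im}(\Psi^\dagger_q)$ is isotropic for the complex \emph{symmetric} form $(X,Y)\mapsto\textup{Tr}(XY)$ on $\mathfrak{sl}(\mathcal{H})$ and is maximal isotropic when $(k,n)=(1,2)$, so that $\Psi^\dagger$ factors through the quadric $\{\textup{Tr}(X^2)=0\}\subset\mathbb{P}(\mathfrak{sl}_2)$. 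That is a correct and natural observation, but it is not the statement the paper is proving, and it does not imply it: that quadric is a smooth complex curve, hence a \emph{symplectic} rather than Lagrangian submanifold of $\mathbb{P}^2$ for the Fubini--Study form. (Indeed, in your own coordinates the map is $z\mapsto[-z:1:-z^2]$, visibly holomorphic; and $c_1(\textup{T}^*\mathbb{P}^1)\neq 0$, which already sits uneasily with the paper's curvature-trace assertion.) In any case, your argument does not engage with the ``isotropic embedded submanifold'' reading that the paper's own proof adopts.
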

\begin{proof}
To see that the pullback of the tautological bundle is $\textup{T}^*\mathbf{G}_k\mathcal{H},$ observe that if $qA=A, Aq=0,$ then 
\begin{equation}
    qA-qAq=A
    \end{equation}
and therefore $\textup{T}^*_q\mathbf{G}_k\mathcal{H}\subset \textup{Ran}(\Psi^{\dagger}_q).$ Conversely, $A:=qM-qMq$ satisfies $qA=A, Aq=0$ and therefore $\textup{Ran}(\Psi^{\dagger}_q)\subset \textup{T}^*_q\mathbf{G}_k\mathcal{H}.$ The statement about the embedding being Lagrangian follows from dimension reasons and the fact that the embedded submanifold is isotropic, as the trace of the curvature of $\textup{T}^*\mathbf{G}_k\mathcal{H}\to \mathbf{G}_k\mathcal{H}$ is zero. 
\end{proof}
\part{The Hyperk\"{a}hler Structure of $\textup{T}^*\mathbb{P}\mathcal{H}$}
Here, we will \textit{completely} and \textit{explicitly} describe the hyperk\"{a}hler structure of $\textup{T}^*\mathbb{P}\mathcal{H}.$ Only \cref{affinee}, \cref{hols} are prerequisites.\ While the metric itself is hard to guess directly, the simplest K\"{a}hler potential turns out to work. Physically, the significance of the Hilbert–Schmidt norm, as opposed to the operator norm, is that states in quantum statistical mechanics are nonnegative operators  and expectation values are defined by pairing with the trace.
\begin{remark}
Physically, the trace class norm $q\mapsto \textup{Tr}(\sqrt{q^*q})$ is a more natural K\"{a}hler potential than the Hilbert–Schmidt norm since states have unit trace–class norm. However, the two norms agree on rank–1 projections.\ This can be seen from the fact that for a rank–one projection 
\begin{equation}
    \sqrt{q^*q}=\frac{q^*q}{\sqrt{\textup{Tr}(q^*q)}}\;.
    \end{equation}
\end{remark}
Recall that a hyperk\"{a}hler manifold can be defined as a triple $(g,I,\mathbf{J}),$ where $g$ is a Riemannian metric and $I,\mathbf{J}$ are anticommuting almost complex structures such that $gI,g\mathbf{J},gI\mathbf{J}$ are closed  2–forms (\cite{biq}).
\\\\In the following, $d^c\tau(A):=-d\tau(iA).$
\begin{lemma}
Let $\mathcal{H}$ be a complex Hilbert space.\ The norm $\tau(x):=\|x\|$ defines a K\"{a}hler potential on $\mathcal{H}\backslash\{0\},$ ie.\ $dd^c\tau$ is K\"{a}hler.\footnote{We use the convention that inner products are linear in the second argument. Also, $dd^c=i\partial\bar\partial.$}
\end{lemma}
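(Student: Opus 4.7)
The plan is to compute $dd^c\tau$ directly on $\mathcal{H}\setminus\{0\}$. Since $\mathcal{H}\setminus\{0\}$ is an open subset of the complex vector space $\mathcal{H}$, every tangent space is canonically identified with $\mathcal{H}$, and I will evaluate $dd^c\tau$ at a point $x$ on constant vector fields $A, B$. Because $dd^c$ of any smooth real-valued function is automatically closed and of type $(1,1)$, the only content to verify is that the associated Hermitian pairing $(A,B) \mapsto dd^c\tau(A, IB)$ is positive definite.

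The key step is to write $\tau = \sqrt{s}$ with $s(x) := \langle x, x\rangle$ and use the chain-rule identity
\[
dd^c\sqrt{s} \;=\; \frac{1}{2\sqrt{s}}\, dd^c s \;-\; \frac{1}{4 s^{3/2}}\, ds \wedge d^c s,
\]
obtained by $d$-differentiating $d^c\sqrt{s} = \tfrac{1}{2\sqrt{s}}\, d^c s$. This reduces the problem to two immediate evaluations on constant vector fields. First, $s$ is the standard Kähler potential for the flat metric on $\mathcal{H}$, so $dd^c s(A, IB)$ is a positive multiple of $\mathrm{Re}\langle A, B\rangle$. Second, $ds$ and $d^c s$ at $x$ are (up to a constant) the real and imaginary parts of the $\mathbb{C}$-linear functional $A \mapsto \langle x, A\rangle$, so $(ds \wedge d^c s)(A, IB)$ is a multiple of $\mathrm{Re}\bigl(\langle x, A\rangle\,\overline{\langle x, B\rangle}\bigr)$.

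Combining, the associated Hermitian form takes the shape
\[
g(A, B) \;=\; \frac{c_1}{\|x\|}\,\mathrm{Re}\langle A, B\rangle \;-\; \frac{c_2}{\|x\|^3}\,\mathrm{Re}\bigl(\langle x, A\rangle\,\overline{\langle x, B\rangle}\bigr),
\]
with positive constants $c_1, c_2$ fixed by the convention $d^c = i(\bar\partial-\partial)$; a direct check yields $c_1 = 2$ and $c_2 = 1$. Positivity is then an application of Cauchy--Schwarz: decomposing $A = \lambda x + A^\perp$ with $\langle x, A^\perp\rangle = 0$, the cross term vanishes and one computes
\[
g(A, A) \;=\; (c_1 - c_2)\,|\lambda|^2 \|x\| \;+\; \frac{c_1}{\|x\|}\,\|A^\perp\|^2,
\]
which is strictly positive for $A \neq 0$ as soon as $c_1 > c_2$, a condition satisfied by the computed values. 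The only real work is tracking the normalization constants to match the paper's sign convention; there is no conceptual obstacle, and the argument uses nothing beyond the Hilbert-space inner product, so it applies in any dimension.
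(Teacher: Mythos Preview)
Your proof is correct and follows essentially the same approach as the paper: both compute $dd^c\tau$ explicitly to obtain the Hermitian form $\frac{2}{\|x\|}\mathrm{Re}\langle A,B\rangle - \frac{1}{\|x\|^3}\mathrm{Re}\bigl(\langle x,A\rangle\overline{\langle x,B\rangle}\bigr)$ and then verify positivity via Cauchy--Schwarz (your orthogonal decomposition is just the equality case of that inequality made explicit). The only cosmetic difference is that you package the derivative computation through the chain-rule identity for $dd^c\sqrt{s}$, whereas the paper just writes down the result of the same calculation directly.
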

\begin{proof}
Let $A,B\in \textup{T}_x\mathcal{H}.$ A computation shows that
\begin{align}
dd^c\tau(A,B)=\frac{i}{\|x\|}\big(\langle A,B\rangle-\langle B,A\rangle)-\frac{i}{2\|x\|^3}\big(\langle A,x\rangle\langle x,B\rangle-\langle B,x\rangle\langle x,A \rangle\big)\;.
\end{align}
It follows that
\begin{align}
dd^c\tau(iA,A)=\frac{2}{\|x\|}\|A\|^2-\frac{1}{\|x\|^3}|\langle A,x\rangle|^2\;.
\end{align}
By the Cauchy–Schwarz inequality
\begin{equation}
    \|A\|^2\ge \frac{|\langle A,x\rangle|^2}{\|x\|^2}\;,
\end{equation}
which shows that $dd^c\tau$ is K\"{a}hler.
\end{proof}
In particular, we get a K\"{a}hler potential on $\mathcal{B}(\mathcal{H})\backslash\{0\}$ by using the Hilbert–Schmidt inner product, and we can pull back the metric to $\textup{T}^*\mathbb{P}\mathcal{H}.$ This results in the following definition:
\begin{definition}
We let $g$ denote the Riemannian metric on $\textup{T}^*\mathbb{P}\mathcal{H}$ given by the real part of 
\begin{equation}
(A,B) \mapsto \frac{2}{\|q\|}\textup{Tr}(A^*B)-\frac{1}{\|q\|^3}\textup{Tr}(A^*q)\textup{Tr}(Bq^*)\;,
\end{equation}
and we let $\mathbf{J}$ be the endomorphism of $\textup{T}\textup{T}^*\mathbb{P}\mathcal{H}$ defined by $g(\mathbf{J}(A),B)=\textup{Re}(\Omega(A,B)).$
\end{definition}
\begin{proposition}
\begin{equation}
    \mathbf{J}(A)=\frac{i}{\|q\|}[q,A^*]+\frac{i}{2\|q\|^3}\textup{Tr}(A^*q)[q,q^*]\;.
    \end{equation}
\end{proposition}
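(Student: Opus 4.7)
The plan is to verify, by direct computation, that the stated formula for $\mathbf{J}A$ satisfies the defining equation $g(\mathbf{J}A, B) = \textup{Re}(\Omega(A, B))$ for every $B \in \textup{T}_q\textup{T}^*\mathbb{P}\mathcal{H}$; since $g$ is nondegenerate, this is enough. The initial sanity check is that the right--hand side is tangent at $q$: by identity (3) of Proposition 1.1.1, both $[q, A^*]$ and $[q, q^*]$ lie in $\textup{T}_q\textup{T}^*\mathbb{P}\mathcal{H}$, so the stated complex linear combination is tangent at $q$.

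To avoid real--part gymnastics, I would first upgrade the defining relation to a single complex equation. Let $\mathbf{h}$ denote the Hermitian form whose real part is $g$; the fact that $I$ is $g$--orthogonal is immediate from the explicit formula, since replacing $A, B$ by $iA, iB$ leaves both $\textup{Tr}(A^*B)$ and $\textup{Tr}(A^*q)\textup{Tr}(Bq^*)$ invariant. Therefore $\mathbf{h}(X, Y) = g(X, Y) + ig(IX, Y)$. Combining this identity with $g(IX, Y) = -g(X, IY)$ and the $I$--holomorphicity $\Omega(A, IB) = i\Omega(A, B)$, a short manipulation gives $\mathbf{h}(\mathbf{J}A, B) = \textup{Re}(\Omega(A, B)) + i\textup{Im}(\Omega(A, B)) = \Omega(A, B)$. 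So it suffices to verify the single complex identity $\mathbf{h}(\mathbf{J}A, B) = i\textup{Tr}(q[A, B])$.

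Now compute. Taking the adjoint, $(\mathbf{J}A)^* = -\tfrac{i}{\|q\|}[A, q^*] - \tfrac{i}{2\|q\|^3}\textup{Tr}(q^*A)[q, q^*]$, using that $[q, q^*]$ is Hermitian and $[q, A^*]^* = [A, q^*]$. Substitute into $\mathbf{h}(\mathbf{J}A, B)$ and simplify using two reductions: cyclicity of trace yields $\textup{Tr}([A, q^*]B) = -\textup{Tr}(q^*[A, B])$; and the rank--$1$ identity $qMq = \textup{Tr}(qM)q$ (which holds for any $M \in \mathcal{B}(\mathcal{H})$ since $q$ has rank $1$) gives $qq^*q = \|q\|^2 q$, hence $\textup{Tr}([q, q^*]q) = 0$, which in turn collapses $\textup{Tr}((\mathbf{J}A)^*q)$ to the single term $-\tfrac{i}{\|q\|}\textup{Tr}([A, q^*]q)$.

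The main obstacle is the algebraic bookkeeping of the final combination. Because $q \ne q^*$ off the zero section, $\textup{Tr}(q^*[A, B])$ differs from $\|q\|^2\textup{Tr}(q[A, B])$ by a specific bilinear correction in $A, B$, most transparently computed in the bra--ket decomposition $A = |v\rangle\langle x| + |y\rangle\langle w|$, $B = |v\rangle\langle z| + |u\rangle\langle w|$ of tangent vectors (with $\langle x|v\rangle = \langle w|y\rangle = 0$ and similarly for $B$). This correction must be precisely cancelled by the combined contribution of the $[q, q^*]$ term of $\mathbf{J}A$ together with the $\textup{Tr}((\mathbf{J}A)^*q)\textup{Tr}(Bq^*)$ cross term coming from $\mathbf{h}$; the coefficients $\tfrac{1}{\|q\|}$ and $\tfrac{1}{2\|q\|^3}$ are tuned exactly for this cancellation. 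As a consistency check, on the zero section (where $q = q^*$, so $\|q\| = 1$ and $[q, q^*] = 0$) the formula reduces to $\mathbf{J}A = i[q, A^*]$ and recovers the standard Fubini--Study complex structure on $\mathbb{P}\mathcal{H}$, where the verification is immediate.
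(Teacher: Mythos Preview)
Your approach is correct and is the same as the paper's, whose proof is literally the two words ``Direct computation.'' Your reductions---upgrading the real defining relation to the complex identity $\mathbf{h}(\mathbf{J}A,B)=\Omega(A,B)$, computing $(\mathbf{J}A)^*$, using cyclicity to rewrite $\textup{Tr}([A,q^*]B)=-\textup{Tr}(q^*[A,B])$, and invoking the rank--one identity $qMq=\textup{Tr}(qM)q$ to kill $\textup{Tr}([q,q^*]q)$---are all sound and already more explicit than what the paper provides. The one place you stop short is the final cancellation: you assert that the correction terms ``must be precisely cancelled'' with coefficients ``tuned exactly,'' but you do not exhibit the algebra. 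Since the paper omits everything, this is not a gap relative to the paper; if you want a self-contained write-up, simply expand $\textup{Tr}(q^*[A,B])$, $\textup{Tr}([q,q^*]B)$, and $\textup{Tr}([q,q^*]A)\textup{Tr}(Bq^*)$ using the rank--one identities $qq^*q=\|q\|^2q$ and $q^*qq^*=\|q\|^2q^*$ (no bra--ket coordinates are actually needed) and collect.
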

\begin{proof}
Direct computation.
\end{proof}
\begin{remark}
$q\mapsto [q,q^*]$ is a natural vector field on $\textup{T}^*\mathbf{G}\mathcal{H}.$
\end{remark}
\begin{corollary}\label{hyp}
$I\mathbf{J}=-\mathbf{J}I$ and $\mathbf{J}^2=-1.$
\end{corollary}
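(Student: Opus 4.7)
The anticommutation relation $I\mathbf{J} = -\mathbf{J}I$ is immediate from the formula for $\mathbf{J}$. Both summands depend on $A$ only through $A^*$ or through $\textup{Tr}(A^*q) = \overline{\textup{Tr}(q^*A)}$, which are antilinear in $A$, so $\mathbf{J}(iA) = -i\,\mathbf{J}(A)$, i.e.\ $I\mathbf{J} + \mathbf{J}I = 0$. Abstractly, the same conclusion follows from $g$ being $I$-Hermitian (as the K\"{a}hler metric of $I$) together with the $(2,0)$-property $\textup{Re}(\Omega)(IA, IB) = -\textup{Re}(\Omega)(A, B)$, combined with the defining relation $g(\mathbf{J}A, B) = \textup{Re}(\Omega(A, B))$.

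For $\mathbf{J}^2 = -1$ the strategy is to substitute $B := \mathbf{J}(A)$ into the formula for $\mathbf{J}(B)$ and simplify. First one computes
\[
B^* = \tfrac{i}{\|q\|}[q^*, A] - \tfrac{i}{2\|q\|^3}\textup{Tr}(q^*A)\,[q, q^*],
\]
using $(iA)^* = -iA^*$, $[q, A^*]^* = -[q^*, A]$, and the self-adjointness of $[q, q^*]$ (itself a consequence of $(qq^*)^* = qq^*$ and $(q^*q)^* = q^*q$). Then $\textup{Tr}(B^*q)$ collapses by cyclic invariance to $\tfrac{i}{\|q\|}\textup{Tr}([q, q^*]A)$, because $\textup{Tr}([q^*, A]q) = \textup{Tr}([q, q^*]A)$ and $\textup{Tr}([q, q^*]q) = 0$ (using $q^2 = q$). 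Assembling everything, $\mathbf{J}(\mathbf{J}A)$ becomes the sum of a term proportional to $[q, [q^*, A]]$, one proportional to $[q, [q, q^*]]$, and one proportional to $[q, q^*]$.

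The heart of the computation is then the simplification of these three pieces using the tangency identities $A = qA + Aq$, $qAq = 0$ from Proposition \ref{identities}, and crucially the rank-$1$ identity $qMq = \textup{Tr}(qM)\,q$ for all $M \in \mathcal{B}(\mathcal{H})$ (its specialization $qq^*q = \|q\|^2 q$ being essential). Expanding $[q, [q^*, A]] = qq^*A + Aq^*q - qAq^* - q^*Aq$ and repeatedly applying these identities reduces it to a scalar multiple of $A$ plus a term proportional to $[q, q^*]$ whose coefficient is built from traces like $\textup{Tr}(q^*A)$; similarly $[q, [q, q^*]]$ simplifies explicitly to $qq^* + q^*q - 2\|q\|^2 q$. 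Substituting back, the $A$-contributions collect to $-A$ while the three independent $[q, q^*]$-contributions cancel.

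The main obstacle is the bookkeeping of those $[q, q^*]$-coefficients: they come from three independent sources and must cancel exactly, which is precisely what pins down the coefficient $\tfrac{1}{2\|q\|^3}$ in the definition of $\mathbf{J}$. The rank-$1$ hypothesis enters decisively through $qMq = \textup{Tr}(qM)\,q$; the same calculation would need to be modified for higher-rank Grassmannians since $qMq$ no longer collapses to a scalar multiple of $q$ there.
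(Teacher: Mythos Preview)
Your approach is correct and essentially the same as the paper's: a direct computation of $\mathbf{J}^2(A)$ as the three pieces $[q,[q^*,A]]$, $[q,[q,q^*]]$, and $[q,q^*]$, followed by verification that they collapse to $-A$ via the rank--$1$ identity $qMq=\textup{Tr}(qM)\,q$. The one organizational difference is that the paper first writes the tangent vector as $A=[q,M]$ (legitimate since $[q,[q,A]]=A$), which turns every term into a word in $q,q^*,M$ times a trace and makes the twelve cancellations visually explicit; your version keeps $A$ abstract and relies on $A=qA+Aq$, $qAq=0$ instead, which is equivalent but slightly less mechanical to track.
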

\begin{proof}
That $I\mathbf{J}=-\mathbf{J}I$ is true by inspection.
For the second part, we compute that
\begin{equation}
 \mathbf{J}^2(A)=\frac{1}{\|q\|^2}[[q^*,A],q] +\frac{\textup{Tr}(q[q^*,A])}{2\|q\|^4}[q,q^*] +\frac{\textup{Tr}(Aq^*)}{2\|q\|^4}[[q,q^*],q]\;,
\end{equation}
where we have used the fact that $\textup{Tr}(q[q,q^*])=0.$ To see that $\mathbf{J}^2=-1,$ let $A=[q,M].$ Using the fact that $\textup{Tr}(qM)q=qMq$ for any rank–$1$ projection $q,$  we have that
\begin{align}
&\frac{1}{\|q\|^2}[[q^*,A],q]=-A+\frac{1}{\|q\|^2}\big(\Cancel[blue]{q\textup{Tr}(qq^*M)}+\Cancel[red]{\frac{qq^*}{\|q\|^2}\textup{Tr}(q^*qM)}-\Cancel[yellow]{qq^*\textup{Tr}(qM)}+\Cancel[green]{q^*q\textup{Tr}(qM)}
\\&-\Cancel[orange]{\frac{q^*q}{\|q\|^2}\textup{Tr}(qq^*M)}
-\Cancel[grey]{q\textup{Tr}(q^*qM)}\big)\;,
\end{align}
while
\begin{align}
&\frac{\textup{Tr}(q[q^*,A])}{2\|q\|^4}[q,q^*]=\frac{1}{2\|q\|^4}\big(\Cancel[yellow]{2qq^*\|q\|^2\textup{Tr}(qM)}-\Cancel[violet]{qq^*\textup{Tr}(qq^*M)}-\Cancel[red]{qq^*\textup{Tr}(q^*qM)}
\\&-\Cancel[green]{2q^*q\|q\|^2\textup{Tr}(qM)}+\Cancel[orange]{q^*q\textup{Tr}(qq^*M)}+\Cancel[pink]{q^*q\textup{Tr}(q^*qM)}\big)
    \end{align}
and
\begin{equation}
 \frac{\textup{Tr}(Aq^*)}{2\|q\|^4}[[q,q^*],q]=\frac{1}{2\|q\|^4}\Big(\textup{Tr}(q^*qM)(\Cancel[grey]{2\|q\|^2q}-\Cancel[red]{qq^*}-\Cancel[pink]{q^*q})-\textup{Tr}(qq^*M)(\Cancel[blue]{2\|q\|^2q}-\Cancel[violet]{qq^*}-\Cancel[orange]{q^*q})\Big) \;, 
\end{equation}
which proves the result.
\end{proof}
\begin{corollary}
$(g,I,\mathbf{J})$ defines a hyperk\"{a}hler structure on $\textup{T}^*\mathbb{P}\mathcal{H}.$ 
\begin{proof}
This follows from \cref{hyp}, the fact that $\Omega,\,dd^c\tau$
are closed and that $\textup{Im}(\Omega(iA,B))=\textup{Re}(\Omega(A,B)).$
\end{proof}
\end{corollary}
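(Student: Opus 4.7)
The plan is to unpack the definition of hyperk\"{a}hler structure recalled just above and identify each of the three fundamental two-forms associated to $(g, I, \mathbf{J})$ with a two-form already known to be closed. First, $g$ is positive definite because it is the pullback to $\textup{T}^*\mathbb{P}\mathcal{H}$ of the K\"{a}hler metric $dd^c\tau$ on $\mathcal{B}(\mathcal{H})\backslash\{0\}$, which is nondegenerate by the lemma preceding the definition of $g$. The anticommutation $I\mathbf{J}=-\mathbf{J}I$ and the identity $\mathbf{J}^2=-1$ are exactly the content of \cref{hyp}. So the remaining task is to verify closedness of $gI$, $g\mathbf{J}$, and $gI\mathbf{J}$.

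For this I would identify each fundamental form with something already shown to be closed. By definition of $g$ as the pullback of $dd^c\tau$, the two-form $gI$ coincides with the pullback of $dd^c\tau$ (which is closed by the lemma). By definition of $\mathbf{J}$, $g\mathbf{J}(A,B) = \textup{Re}(\Omega(A,B))$, so $g\mathbf{J} = \textup{Re}(\Omega)$ is closed since $\Omega$ is closed. Finally, using the anticommutation $I\mathbf{J}=-\mathbf{J}I$ together with the $I$-holomorphicity of $\Omega$ expressed as $\textup{Im}(\Omega(iA,B)) = \textup{Re}(\Omega(A,B))$,
\begin{equation*}
gI\mathbf{J}(A,B) \;=\; g(I\mathbf{J}A,B) \;=\; -g(\mathbf{J}IA,B) \;=\; -\textup{Re}(\Omega(iA,B)) \;=\; \textup{Im}(\Omega(A,B)),
\end{equation*}
so $gI\mathbf{J} = \textup{Im}(\Omega)$ is closed as well.

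Integrability of $I$ is automatic since $\textup{T}^*\mathbb{P}\mathcal{H}$ is a complex submanifold of $\mathcal{B}(\mathcal{H})$ with $I$ induced by multiplication by $i$; integrability of $\mathbf{J}$ (and hence of $I\mathbf{J}$) then follows from Hitchin's standard observation that, for anticommuting almost complex structures compatible with a common Riemannian metric, closedness of all three fundamental two-forms forces integrability. The only real computation is the sign chase identifying $gI\mathbf{J}$ with $\textup{Im}(\Omega)$, so that is where I expect the main risk of a sign slip; all other steps are direct invocations of \cref{hyp} and of the closedness results already established. Well-definedness of $\mathbf{J}$ as an endomorphism via $g(\mathbf{J}A,B)=\textup{Re}(\Omega(A,B))$ is guaranteed by the nondegeneracy of $g$, which is in turn guaranteed by the strict plurisubharmonicity of $\tau$ shown in the lemma.
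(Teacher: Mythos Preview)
Your proposal is correct and follows exactly the route the paper's one-line proof is pointing to: you make explicit that $gI$ is (up to sign) the pullback of $dd^c\tau$, that $g\mathbf{J}=\textup{Re}(\Omega)$ by definition of $\mathbf{J}$, and that the identity $\textup{Im}(\Omega(iA,B))=\textup{Re}(\Omega(A,B))$ together with $I\mathbf{J}=-\mathbf{J}I$ gives $gI\mathbf{J}=\textup{Im}(\Omega)$. Your added remark on integrability via Hitchin's lemma is a nice supplement but is not strictly needed under the paper's stated definition of hyperk\"ahler.
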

\begin{lemma}
For $\mathcal{H}=\mathbb{C}^2,$ $g$ is isometric to the Eguchi–Hanson metric, up to a constant.
\begin{proof}
This follows from the isomorphism with the affine quadric given in \cref{affine}, which identifies $\|q\|$ with
\begin{equation}
    \frac{1}{\sqrt{2}}\sqrt{|x|^2+|y|^2+|z|^2+1}\;.
\end{equation}
Up to a constant, this is a K\"{a}hler potential for the Eguchi–Hanson metric (\cite{ionel}, see the paragraph following equation 3.2).
\end{proof}
\end{lemma}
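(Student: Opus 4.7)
The plan is to transport the Kähler potential $\tau(q) = \|q\|$ from $\textup{T}^*\mathbb{P}^1$ to the affine quadric $\{X^2+Y^2+Z^2=1\}\subset \mathbb{C}^3$ using the biholomorphism of \cref{affine}, and then invoke the formula from \cite{ionel} that the square root of the standard affine quadric potential is (up to a constant) a Kähler potential for Eguchi–Hanson. Since the metric $g$ is defined by $g = dd^c\tau$ on $\textup{T}^*\mathbb{P}\mathcal{H}$ pulled back from $\mathcal{B}(\mathcal{H})\setminus\{0\}$, and since Kähler potentials that differ by a constant multiple (or by a pluriharmonic function) produce metrics that differ by the same constant, verifying the claim reduces to an algebraic identity relating $\|q\|$ and the standard Euclidean norm on the quadric.

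First, I would compute $\|q\|^2 = \textup{Tr}(q^*q)$ directly for $q = \begin{pmatrix} z & y \\ x & 1-z \end{pmatrix}$, obtaining
\begin{equation*}
\|q\|^2 = |x|^2 + |y|^2 + |z|^2 + |1-z|^2.
\end{equation*}
Next I would apply the change of coordinates $(X,Y,Z) := (i(x-y),\,x+y,\,1-2z)$ of \cref{affine}. Expanding $|X|^2, |Y|^2, |Z|^2$ and summing, the mixed terms $x\bar y + \bar x y$ cancel between $|X|^2$ and $|Y|^2$, and the remaining algebra collapses to
\begin{equation*}
|X|^2+|Y|^2+|Z|^2+1 \;=\; 2\bigl(|x|^2+|y|^2+|z|^2+|1-z|^2\bigr) \;=\; 2\|q\|^2,
\end{equation*}
which gives the identity $\|q\| = \tfrac{1}{\sqrt{2}}\sqrt{|X|^2+|Y|^2+|Z|^2+1}$ stated in the lemma.

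Finally, I would quote the formula immediately following equation (3.2) in \cite{ionel}, where precisely this function (up to scale) is exhibited as a Kähler potential for the Eguchi–Hanson metric on the affine quadric. Combining with the previous two steps and the fact that \cref{affine} is a biholomorphism identifying the two complex manifolds, one concludes that $g$ and the Eguchi–Hanson metric coincide up to the overall constant factor.

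There is no real obstacle in this argument; the only delicate point is bookkeeping the factor of $\tfrac{1}{\sqrt 2}$ in passing between the two K\"ahler potentials and confirming that this accounts for the overall constant appearing in Eguchi–Hanson's standard normalization. Everything else is a routine trace computation combined with the algebraic identity above.
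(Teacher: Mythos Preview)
Your proposal is correct and follows exactly the paper's approach: compute $\|q\|$ in the matrix coordinates, transport it via the biholomorphism of \cref{affine} to obtain $\|q\|=\tfrac{1}{\sqrt{2}}\sqrt{|X|^2+|Y|^2+|Z|^2+1}$ on the affine quadric, and then cite \cite{ionel} for the identification with the Eguchi--Hanson potential. The only difference is that you spell out the algebraic verification of the identity $|X|^2+|Y|^2+|Z|^2+1=2\|q\|^2$, which the paper simply asserts.
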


\begin{thebibliography}{9}
\bibitem{ber1}
F.A. Berezin. \textit{General concept of quantization.} Commun.Math. Phys. 40, 153–174 (1975). https://doi.org/10.1007/BF01609397
\bibitem{biq}
Olivier Biquard and Paul Gauduchon. \textit{Hyperk\"{a}hler Metrics on Cotangent Bundles of Hermitian Symmetric Spaces.} Geometry and Physics (1996). ISBN:9781003072393
\bibitem{bord}
M. Bordeman, E. Meinrenken and M. Schlichenmaier. \textit{Toeplitz quantization of K\"{a}hler 
manifolds and gl(n), n $\to\infty$ 
limits.} Comm. Math. Phys. 165 (1994), 281-296.
\bibitem{bor}
David Borthwick and Alejandro Uribe. \textit{Almost complex structures and geometric quantization.} Mathematical Research Letters 3 (1996): 845-861.
\bibitem{bris}
Christopher M. Brislawn. \textit{Traceable integral kernels on countably generated measure spaces.} Pacific Journal of Mathematics,
Vol. 150, No. 2, 1991.
\bibitem{calabi}
E. Calabi. \textit{M\'{e}triques K\"{a}hl\'{e}riennes et fibr\'{e}s holomorphes.} Ann. Ec. Norm. Sup. 12 (1979),
269–294.
\bibitem{brane}
D. Gaiotto and E. Witten. \textit{Probing Quantization Via Branes.} \href{https://arxiv.org/abs/2107.12251v2}{arXiv:2107.12251} (2021).
https://doi.org/10.1134/S0001434622090267
\bibitem{hanson}
Andrew J. Hanson and Ji-Ping Sha. \textit{Isometric Embedding of the A1 Gravitational Instanton.}  Memorial Volume for Kerson Huang, 95-111, 2017.
\bibitem{ionel}
Marianty Ionel and Maung Min-Oo. \textit{Cohomogeneity one special Lagrangian 3-folds in the deformed and the resolved conifolds.} Illinois J. Math. 52 (3) 839 - 865, Fall 2008. https://doi.org/10.1215/ijm/1254403718
\bibitem{klauder4}
J.R. Klauder and B.-S. Skagerstam. \textit{A Coherent-State Primer.} World Scientific Publishing (1985), Singapore, 1-115.
\bibitem{koba}
S. Kobayashi and K. Nomizu. \textit{Foundations of Differential Geometry.} Wiley Classics Library. Vol. 2. Wiley, (2009) [1969]. ISBN 978-0-471-15732-8. Zbl 0175.48504
\bibitem{Lackman2}
Joshua Lackman. \textit{On an Axiomatization of Path Integral Quantization and its Equivalence to Berezin's Quantization.} \href{https://arxiv.org/abs/2410.02739}{arXiv:2410.02739} [math.SG] (2024).
\bibitem{Lackman1}
Joshua Lackman. \textit{Quantization of Holomorphic Symplectic Manifolds: Analytic Continuation of Path Integrals and Coherent States.} arXiv:2410.02739 [math.SG] (2025).
\bibitem{Lackman2}
Joshua Lackman. \textit{A Groupoid Construction of Functional Integrals: Brownian Motion and Some TQFTs.}  \href{https://arxiv.org/abs/2402.05866}{arXiv:2402.05866v2} [math.DG] (2024).
\bibitem{leung}
CW. Leung and CK. Ng. \textit{Analytic bundle structure on the idempotent manifold.} Monatsh Math 196, 103–133 (2021). https://doi.org/10.1007/s00605-021-01562-4
\bibitem{ma}
Xiaonan Ma and George Marinescu. \textit{Berezin-Toeplitz quantization and its kernel expansion.} Travaux math´ematiques, Volume 19 (2011), 125–166.
\bibitem{pol0}
A. Odzijewicz. \textit{Coherent states and geometric quantization.} Commun.Math. Phys. 150, 385–413 (1992). https://doi.org/10.1007/BF02096666
\end{thebibliography}
\end{document}